\newcommand{\ru}[1]{\rule{0pt}{#1 em}}
\newcommand{\tupsingle}{\ru{3}} 
\newlength\titlebox \setlength\titlebox{2.25in}
\theoremstyle{definition}
\newtheorem{definition}{Definition}
\theoremstyle{plain}
\newtheorem{theorem}{Theorem}
\newtheorem{lemma}[theorem]{Lemma}
\newtheorem{proposition}[theorem]{Proposition}
\newtheorem{corollary}{Corollary}[theorem]
\theoremstyle{remark}
\newtheorem{remark}{Remark}
\newtheorem{example}{Example}
\title{Mean distance in polyhedra}
\author[ ]{Dominik Beck}
\affil[ ]{\small Faculty of Mathematics and Physics, Charles University, Prague}
\affil[ ]{\textit {\href{mailto:beckd@karlin.mff.cuni.cz}{beckd@karlin.mff.cuni.cz}}}
\date{September 23, 2023}
\begin{document}
\maketitle
\begin{abstract}
Given any polyhedron from which we select two random points uniformly and independently, we show that all the moments of the distance between those points can be always written in terms of elementary functions. As an illustration, the mean distance is found in the exact form for all Platonic solids.
\end{abstract}

\tableofcontents

\section{Introduction} 
Let $K$ be a polyhedron\footnote{In this paper, a polyhedron is a three-dimensional polytope, not necessarily convex in general.}, from which we select two random points $X$ and $Y$ uniformly and independently. Let $L=\|X-Y\|$ be the distance between them and $L_{KK}^{(p)} = \expe{L^p |\, X \in K, Y \in K}$ its $p$-th statistical moment. Even-power moments are trivial to compute. The value $L_{KK}^{(p)}$ has been known in the exact form only for $K$ being a ball (trivial) or (for $p=1$) a unit cube \cite{robbinsCuLi}, known as the so called Robbins constant
\begin{equation}
\mathbb{E}\left[L\right]=\frac{4}{105} + \frac{17\sqrt{2}}{105}-\frac{2\sqrt{3}}{35}-\frac{\pi}{15}+\frac{1}{5}\arccoth\sqrt{2}+\frac{4}{5}\arccoth\sqrt{3} \approx 0.66170718.
\end{equation}
Recently, Bonnet, Gusakova, Th\"{a}le and Zaporozhets \cite{bonnet2021sharp} found a sharp optimal bound on the \emph{normalised mean distance} $\Gamma_{KK} = L_{KK}/V_1(K)$ in convex and compact $K$, where $V_1(K) = 2\oint_{\partial B_1(0)} \|\operatorname{proj}_{\uvect{r}} K\| \ddd\uvect{r}$ is the first intrinsic volume of $K$. A special case of their result in three dimensions gives $\frac{5}{28}<\Gamma_{KK}<\frac{1}{3}$.

As stated in \cite{bonnet2021sharp}, although the first intrinsic volume is easy to express in any polyhedron, number of examples for which an exact formula for $L_{KK}$ is available is rather limited. We will show that this might not be the case and indeed one can find $L_{KK}$ (and all natural moments $L_{KK}^{(p)}$) in an exact form easily for any $K$ being a polyhedron. The main result of our own investigation is thus the following theorem:

\begin{theorem}\label{mainthm}
For any given polyhedron, the mean distance between two of its inner points selected at random can be always expressed in terms of elementary functions of the location of its vertices and sides. The same holds for all other natural moments.
\end{theorem}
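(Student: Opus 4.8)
The plan is to collapse the six-dimensional integral defining the moment down to a one-dimensional one by successively integrating out dimensions, checking at each stage that the class of functions produced is closed under the next integration. Writing $I_p = \int_K\int_K \|x-y\|^p\,\dd x\,\dd y$ so that $L_{KK}^{(p)} = I_p/\volKsq$, the first move is the change of variables $z = x-y$, which turns $I_p$ into the weighted covariogram integral
\begin{equation*}
I_p = \int_{\mathbb{R}^3} \|z\|^p\, g_K(z)\,\dd z, \qquad g_K(z) = \vol{K \cap (K+z)}.
\end{equation*}
For even $p$ the integrand $\|z\|^p$ is already a polynomial and $I_p$ is visibly a polynomial in the vertex coordinates, which is why those moments are trivial; the content is entirely in odd $p$, where the square root survives.

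Next I would establish that $g_K$ is a compactly supported piecewise polynomial of degree $\le 3$. Triangulating $K$ lets us write its indicator as a signed sum $\mathbb{1}_K = \sum_i \epsilon_i \mathbb{1}_{T_i}$ of simplex indicators, so that $g_K(z) = \sum_{i,j}\epsilon_i\epsilon_j \vol{T_i \cap (T_j + z)}$. Each summand is the volume of the intersection of two simplices, a piecewise polynomial function of the translation whose pieces are polyhedral cells; the cell walls lie on the finitely many planes $\langle z, n\rangle = c$ dictated by the face normals $n$ and vertex differences of $K$. Thus $\mathbb{R}^3$ splits into finitely many polyhedral cells, on each of which $g_K$ coincides with an explicit cubic, and everything so far is expressed rationally in the vertex data.

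I would then pass to spherical coordinates $z = r\uvect{r}$. For each fixed direction $\uvect{r}$ the ray meets the cell walls at radii $r_k(\uvect{r}) = c_k/\langle \uvect{r}, n_k\rangle$, between which $g_K(r\uvect{r})$ is a polynomial in $r$; since $g_K$ has compact support only finitely many breakpoints are active and they stay bounded. The radial integral $\int_0^\infty r^{p+2} g_K(r\uvect{r})\,\dd r$ is therefore a finite sum of monomial integrals over the intervals $[r_{k-1},r_k]$, and evaluating them produces a polynomial in the breakpoints $r_k(\uvect{r})$, hence a rational function of $\uvect{r}$ whose denominator is a product of powers of the linear forms $\langle \uvect{r}, n_k\rangle$. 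No transcendental terms appear yet, and because the active breakpoints are bounded the denominators stay bounded away from zero on each angular cell, so the result is a bounded rational function on the sphere.

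The remaining and decisive step is the angular integration: $I_p$ is now the integral over $S^2$ of a function that is piecewise (the ordering and activity of the breakpoints change across great circles, cutting $S^2$ into spherical polygons) rational in $\uvect{r}$. The crux is to show such integrals evaluate to elementary functions. I would do this by a Stokes/Green argument on the sphere: on each spherical polygon find a $1$-form whose differential reproduces the rational area density, so that the surface integral collapses to a sum of line integrals along the bounding great-circle arcs; parametrizing each arc by an angle and applying the Weierstrass substitution $u = \tan(t/2)$ turns every arc integral into that of a rational function of $u$, whose antiderivative is elementary (rational parts, logarithms and arctangents). Evaluating these antiderivatives at the arc endpoints, which are explicit algebraic functions of the vertices, yields the $\arccoth$- and $\pi$-type terms seen in the Robbins constant and proves $L_{KK}^{(p)}$ is elementary in the vertices and faces. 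The main obstacle is precisely this last stage: organizing the spherical cell decomposition and verifying that the particular rational densities arising from the covariogram always integrate to elementary functions over spherical polygons; the earlier reductions are bookkeeping, whereas closure under the final angular integration is what the theorem really asserts.
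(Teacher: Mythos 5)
Your first three steps are correct, and they genuinely depart from the paper (which never forms the covariogram: it applies the Crofton Reduction Technique repeatedly and finishes the surviving irreducible configurations -- polygon--point, skew segments, parallel pairs -- with planar integrals). The covariogram identity, the piecewise-cubic structure of $g_K$ on polyhedral cells, and the radial integration producing piecewise-rational functions on a great-circle arrangement of $S^2$ are all sound. The gap is the step you yourself flag as the obstacle, and it is not a technicality that can be waved through: ``find a $1$-form whose differential reproduces the density'' has no content by itself, since on a contractible spherical cell a smooth primitive always exists. What your Weierstrass step actually requires is a primitive whose restriction to every bounding great circle is a \emph{rational} function of the arc parameter -- essentially a primitive that is itself rational in the Cartesian coordinates of $\uvect{r}$. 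That existence statement is exactly as strong as the elementarity you are trying to prove: if the primitive you construct carries logarithmic or inverse-trigonometric factors (which is what naive integration of the densities $\uvect{r}^{\alpha}\langle\uvect{r},n_k\rangle^{-m}$ produces), then the arc integrals are integrals of transcendental elementary functions, a class that is \emph{not} closed under integration -- dilogarithm-type functions arise in precisely this way. So, as written, the argument proves only the (comparatively easy) reduction; the entire content of Theorem \ref{mainthm} sits in the unproven closure claim.

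The gap can be closed, and doing so reconnects with the paper's machinery. Every term of your piecewise-rational density has the form $\uvect{r}^{\alpha}\,\langle\uvect{r},n_k\rangle^{-(p+3+|\alpha|)}$, and on any cell where it is active $\langle\uvect{r},n_k\rangle$ has fixed sign and is bounded away from zero, so (after flipping $n_k$) the cell lies in the open hemisphere $\langle\uvect{r},n_k\rangle>0$. Apply the gnomonic projection $\uvect{r}\mapsto x=\uvect{r}/\langle\uvect{r},n_k\rangle$ onto the plane $\langle x,n_k\rangle=1$: it maps great circles to straight lines, hence your spherical polygons to planar polygons, and satisfies $\dd\uvect{r}=\|x\|^{-3}\,\dd A$, so the term becomes $\int x^{\alpha}\|x\|^{p}\,\dd A=\int (\text{polynomial in }k)\left(1+\|k\|^{2}\right)^{p/2}\dd k$ over a planar polygon, where $k$ is the in-plane coordinate. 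These are exactly the paper's auxiliary integrals $I^{(p)}_{ij}$, whose elementarity is established by the recursions in the appendix (and this also explains why your route must produce the same $\arccoth$/$\log$/$\arctan$ terms). With that substitute for the Stokes/Weierstrass sketch, your argument becomes a complete proof that is genuinely different from the paper's: the paper reaches the same planar integrals combinatorially, by two applications of Crofton reduction plus the overlap formula for parallel configurations, whereas you trade that reduction for the covariogram decomposition plus one projection argument.
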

\begin{remark}
By elementary functions, we mean closed field of functions containing radicals, exponential, trigonometric and hyperbolic functions and their inverses.
\end{remark}

The theorem is solely based on the Crofton Reduction Technique (CRT), see \cite{dunbar1997average,ruben1973more}, which under certain conditions enables us to express $L_{KK}^{(p)}$ as some linear combination of $L_{AB}^{(p)} = \expe{L^p |\, X \in A, Y \in B}$ over domains $A$ and $B$ with smaller dimension than that of $K$. In fact very recently, using different methods, Ciccariello \cite{ciccariello2020chord} showed that the so called chord-length distribution, which is related with the distribution of $L$, can also be expressed in terms of elementary functions in any polyhedron $K$.

\section{Preliminaries}
\subsection{Crofton Reduction Technique}

\begin{definition}
A polytope $A \subset \mathbb{R}^d$ of dimension $\dim A = a \in \{0,1,2,\ldots,d\}$ and $a-$volume $\volA$ is defined as a connected and finite union of $a$-dimensional simplices. We say a polytope is \textbf{flat} if $\dim \mathcal{A}(A) = \dim A$, where $\mathcal{A}(A)$ stands for the affine hull of $A$. Note that any polytope with $a=d$ is flat automatically.
\end{definition}

\begin{definition}
We denote $\mathcal{P}_a(\mathbb{R}^d)$ the set of flat polytopes of dimension $a$ in $\mathbb{R}^d$ and denote $\mathcal{P}(\mathbb{R}^d) = \bigcup_{0\leq a \leq d} \mathcal{P}_a(\mathbb{R}^d)$ the set of all flat polytopes in $\mathbb{R}^d$. Finally, we denote $\mathcal{P}_+(\mathbb{R}^d) = \mathcal{P}(\mathbb{R}^d)\setminus\mathcal{P}_0(\mathbb{R}^d)$ (flat polytopes excluding points).
\end{definition}

\begin{definition}
Let $A,B \in \mathcal{P}(\mathbb{R}^d)$ and $P: \mathbb{R}^d \times \mathbb{R}^d \to \mathbb{R}$, we denote $P_{AB} = \Ex \left[ P(X,Y) \, |\, X\in A,\right.$ $\left. Y \in B, \,\text{uniform and independent}\right]$. Whenever it is unambiguous, we write $P_{ab}$ where $a=\dim A$ and $b = \dim B$ instead of $P_{AB}$. If there is still ambiguity, we can add additional letters after as superscripts to distinguish between various mean values $P_{ab}$.
\end{definition}

\begin{proposition}
For any $A \in \mathcal{P}_a(\mathbb{R}^d)$ with $a> 0$, there exist \textbf{convex} $\partial_i A \in \mathcal{P}_{a-1}(\mathbb{R})$ (\textbf{sides} of $A$) such that $\partial A = \bigcup_i \partial_i A$ with pairwise intersection of $\partial_i A$ having $(a\!-\!1)$-volume equal to zero.
\end{proposition}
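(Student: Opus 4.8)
The plan is to pass to the affine hull of $A$, triangulate, and declare the boundary $(a-1)$-simplices to be the sides $\partial_i A$. Since $A$ is flat we have $\dim \mathcal{A}(A) = a$, so first I would fix an isometry $\phi \colon \mathcal{A}(A) \to \mathbb{R}^a$. Being affine and distance-preserving, $\phi$ preserves dimension, convexity, and $k$-volumes for every $k$, so it suffices to prove the claim for the full-dimensional set $\phi(A) \subset \mathbb{R}^a$ and then transport the resulting decomposition back through $\phi^{-1}$. From here on I work in $\mathbb{R}^a$, write $A$ for $\phi(A)$, and interpret $\partial A$ as the ordinary topological boundary.

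Next I would triangulate. By hypothesis $A = \bigcup_j S_j$ is a finite union of $a$-simplices. Collecting the finitely many hyperplanes spanned by the facets of all the $S_j$, their arrangement cuts $\mathbb{R}^a$ into finitely many convex cells, each $S_j$ being a union of closed cells; coning each bounded cell from an interior point, compatibly along shared faces, produces a finite simplicial complex $\mathcal{K}$ whose simplices meet face-to-face and for which $A$ is the union of a subfamily of the top-dimensional ($a$-dimensional) simplices of $\mathcal{K}$. I would then take $\{\partial_i A\}$ to be exactly the $(a-1)$-simplices $F$ of $\mathcal{K}$ with $F \subset \partial A$. Each such $F$ is a simplex, hence convex, and is flat of dimension $a-1$, so after pulling back by $\phi^{-1}$ it lies in $\mathcal{P}_{a-1}(\mathbb{R}^d)$ as required.

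To finish I would verify the two defining properties. The relative boundary of a pure $a$-dimensional complex is a subcomplex that is pure of dimension $a-1$, so every point of $\partial A$ lies in some $(a-1)$-face $F \subset \partial A$, giving $\partial A = \bigcup_i \partial_i A$. For the overlaps, any two distinct simplices of a simplicial complex intersect in a common face; hence $\partial_i A \cap \partial_j A$ has dimension at most $a-2$ for $i \neq j$, and therefore its $(a-1)$-volume is zero. Convexity, dimension, and the vanishing of overlap volumes are all immediate once the complex $\mathcal{K}$ is in hand.

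The hard part will be the triangulation and boundary-identification step: justifying that a finite family of possibly overlapping, non-face-to-face simplices can be resubdivided into a genuine face-to-face simplicial complex, and that the topological boundary is recovered \emph{exactly} as a pure $(a-1)$-dimensional subcomplex. The only real pitfall is degenerate input — for instance if $A$ carries lower-dimensional \say{whiskers} or is not the closure of its relative interior — which I would handle either by imposing the mild regularity assumption that $A$ equals the closure of its relative interior, or by remarking that any such lower-dimensional appendages contribute nothing to $\partial A$ in the sense of $(a-1)$-volume and may be discarded. Everything else reduces to standard facts about finite simplicial complexes.
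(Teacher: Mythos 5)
The paper states this proposition without proof, treating it as a standard fact about polytopes, so there is no argument of the paper's to compare yours against; your proposal has to stand on its own, and in outline it is correct --- the hyperplane-arrangement refinement followed by identifying the boundary subcomplex is the natural argument. Three remarks. First, your worry about degenerate input (\say{whiskers}, or $A$ not being the closure of its relative interior) is moot: the paper's definition of a polytope already requires $A$ to be a finite union of $a$-dimensional simplices, and in the affine hull each such simplex is the closure of its interior, so $A$ is automatically regular closed; no extra hypothesis is needed or permitted. Second, and more substantively, the entire weight of your argument rests on the assertion that the boundary of a pure $a$-dimensional complex is a pure $(a-1)$-dimensional subcomplex, which you invoke as a standard fact. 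It is true, but since $A$ need not be a manifold (the definition allows pinch points, e.g.\ two triangles of $\mathbb{R}^2$ sharing only a vertex), manifold-with-boundary intuition does not apply, and the fact deserves its one-line proof: if some $x \in \partial A$ lay in no closed $(a-1)$-cell contained in $\partial A$, choose a small ball $B$ around $x$ meeting only cells that contain $x$ and avoiding every $(a-1)$-cell inside $\partial A$; then $B \setminus \partial A = (B \cap \operatorname{int} A) \sqcup (B \setminus A)$ with both pieces open and nonempty, while $B \cap \partial A$ is contained in finitely many convex sets of dimension at most $a-2$, and removing such a set cannot disconnect a ball --- contradiction. Third, a simplification: the proposition asks only for \emph{convex} flat sides, not simplices, so the face-to-face cell complex induced by your hyperplane arrangement already does the job; the coning/triangulation step --- which, as written (\say{coning each bounded cell from an interior point}), is not quite a valid construction of a simplicial complex unless one recurses over faces by dimension --- can simply be dropped.
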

\begin{remark}
The sides of three dimensional polytopes (polyhedra) are called \textbf{faces}.
\end{remark}
\begin{definition}
Let $A \in \mathcal{P}_+(\mathbb{R}^d)$. Let $\uvect{n}_i$ be the outer normal unit vector of $\partial_i A$ in $\mathcal{A}(A)$, then we define a \textbf{signed distance} $h_C(\partial_i A)$ from a given point $C \in \mathcal{A}(A)$ to $\partial_i A$ as a scalar product $\langle v_i,\uvect{n}_i \rangle$, where $v_i = x_i - C$ and $x_i \in \partial_i A$ arbitrary. Note that if $A$ is convex, the signed distance coincides with the \textbf{support function} $h(A - C,\uvect{n}_i)$ defined for any convex domain $B$ as $h(B,\uvect{n}_i) = \sup_{b \in B} \langle b,\uvect{n}_i \rangle$.
\end{definition}

\begin{remark}\label{RemGeo}
The signed distance has another geometric interpretation. Put $C = O$ (the origin) and $r = 1 + \varepsilon$ (with $\varepsilon$ small). Denote $\int_{(B,A)} = \int_{B/A}-\int_{A/B}$, by linearity $\int_{B} = \int_{A} + \int_{(B,A)}$. Hence
\begin{equation}
    \vol{r A} = \int_{rA} \dd x = \int_{A} \dd x + \int_{(rA,A)} \dd x = \volA + \varepsilon \sum_i \vol{\partial_i A} h_O(\partial_i A) + O(\varepsilon^2),
\end{equation}
or in other words, $\dd \vol{rA}/\dd r |_{r=1} = \sum_i \vol{\partial_i A} h_O(\partial_i A)$ for $A$ arbitrary (possibly non-convex).
\end{remark}

\begin{definition}
\label{def:weights}
Let $A \in \mathcal{P}_+(\mathbb{R}^d)$ with $a = \dim A$. Even though $\partial A \notin \mathcal{P}(\mathbb{R}^d)$, we still define $P_{\partial A \, B}$ for a given point (called a scaling point) $C \in \mathcal{A}(A)$ as a weighted mean via the relation
\begin{equation}
    P_{\partial A\, B} = \sum_i w_i P_{\partial_i A\, B} 
\end{equation}
with weights $w_i$ equal to
\begin{equation}
    w_i = \frac{\vol{\partial_i A}}{a \volA} h_C(\partial_i A).
\end{equation}
\end{definition}

\begin{definition}
Let $P: \mathbb{R}^d \times \mathbb{R}^d \to \mathbb{R}$. We say $P$ is \textbf{homogeneous} of order $p \in \mathbb{R}$, if there exists $\tilde{P}: \mathbb{R}^d \to \mathbb{R}$ such that $P(x,y) = \tilde{P}(x-y)$ for all $x,y \in \mathbb{R}^d$ and $\tilde{P}(r v) = r^p \tilde{P}(v)$ for all $v \in \mathbb{R}^d$ and all $r > 0$. We write $p = \dim P$ (although $p$ might not be an integer). We say $P$ is \textbf{even} if $P(x,y)=P(y,x)$.
\end{definition}
\begin{remark}
Note that if $P$ is even, then $P_{AB}=P_{BA}$ for any domains $A$ and $B$.
\end{remark}
\begin{example}
If $P = L^p$, or more precisely $P(x,y) = L^p(x,y) = \|x-y\|^p$, then $P$ is even and homogeneous of $\dim P = p$ and with $\tilde{P}(x)=\|x\|^p$. Whenever $P = L^p$, we will use $P_{AB}$ and $L_{AB}^{(p)}$ interchangeably throughout this paper.
\end{example}
\begin{lemma}[Crofton Reduction Technique] Let $P: \mathbb{R}^d\times \mathbb{R}^d \to \mathbb{R}$ be homogeneous of order $p$ and $A,B \in \mathcal{P}(\mathbb{R}^d)$, then for any $C \in \mathcal{A}(A)\cap \mathcal{A}(B)$ holds
\begin{equation}
p P_{AB} = a (P_{\partial A\, B} - P_{AB}) + b (P_{A\,\partial B} - P_{AB}).
\end{equation}
\begin{figure}[h]
    \centering
     \includegraphics[height=0.18\textwidth]{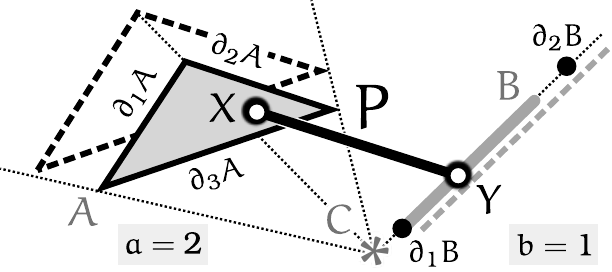}
    \caption{Crofton Reduction Technique}
    \label{fig:Crofton}
\end{figure}
\end{lemma}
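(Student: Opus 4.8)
The plan is to exploit the scaling behaviour of $P_{AB}$ about the common point $C$. Since $P(x,y)=\tilde P(x-y)$ depends only on $x-y$, the quantity $P_{AB}$ is translation invariant, so I may assume without loss of generality that $C=O$ is the origin; scaling about $C$ then becomes ordinary dilation $A\mapsto rA$, $B\mapsto rB$. The central object is the unnormalised integral
\[
I(r)=\int_{rA}\int_{rB} P(x,y)\ddd x\ddd y,
\]
and the whole proof consists of computing $\left.\frac{\dd I}{\dd r}\right|_{r=1}$ in two different ways and equating the results.

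First I would compute $I'(1)$ using homogeneity alone. Substituting $x=ru$ and $y=rv$ with $u\in A$, $v\in B$, the $a$- and $b$-dimensional volume elements contribute factors $r^{a}$ and $r^{b}$, while homogeneity gives $P(ru,rv)=\tilde P\big(r(u-v)\big)=r^{p}P(u,v)$. Hence $I(r)=r^{a+b+p}I(1)$, so that
\[
\left.\frac{\dd I}{\dd r}\right|_{r=1}=(a+b+p)\,I(1)=(a+b+p)\,\volA\,\volB\,P_{AB}.
\]

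The second computation is the heart of the argument and differentiates the domains of integration directly. The key step is a function-weighted version of Remark~\ref{RemGeo}: for any integrable $f$ on $\mathcal{A}(A)$,
\[
\left.\frac{\dd}{\dd r}\int_{rA} f\ddd x\right|_{r=1}=\sum_i h_C(\partial_i A)\int_{\partial_i A} f\ddd\sigma,
\]
which follows exactly as in the Remark: to first order in $\varepsilon=r-1$ the signed shell of $rA$ against $A$ decomposes face by face, each face $\partial_i A$ being displaced normally by $\varepsilon\, h_C(\partial_i A)$ (a constant along the flat face, and correctly signed for non-convex $A$). Applying this with $f(x)=\int_B P(x,y)\ddd y$, together with the analogous statement for the $B$-scaling, the product rule yields
\[
\left.\frac{\dd I}{\dd r}\right|_{r=1}=\sum_i h_C(\partial_i A)\,\vol{\partial_i A}\,\volB\,P_{\partial_i A\,B}+\sum_j h_C(\partial_j B)\,\volA\,\vol{\partial_j B}\,P_{A\,\partial_j B}.
\]
Recognising $\vol{\partial_i A}\,h_C(\partial_i A)=a\,\volA\,w_i$ from Definition~\ref{def:weights}, the two sums collapse into $a\,\volA\,\volB\,P_{\partial A\,B}$ and $b\,\volA\,\volB\,P_{A\,\partial B}$.

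Equating the two expressions for $I'(1)$ and cancelling $\volA\,\volB$ gives $(a+b+p)P_{AB}=a\,P_{\partial A\,B}+b\,P_{A\,\partial B}$, which rearranges immediately into the claimed identity. The main obstacle I anticipate is rigorously justifying the domain-differentiation formula in full generality: one must make the first-order shell expansion precise for \emph{lower-dimensional} flat polytopes (where the dilation and the flux are taken inside the affine hull $\mathcal{A}(A)$, using the outer normal there, which is why $C\in\mathcal{A}(A)\cap\mathcal{A}(B)$ is required) and for \emph{non-convex} $A$, where individual faces may have $h_C<0$ and the shell contribution must be read with the correct sign, precisely as the signed-distance bookkeeping of Remark~\ref{RemGeo} is set up to handle.
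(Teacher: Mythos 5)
Your proof is correct and is essentially the paper's own argument: both compute the first-order behaviour of $\int_{rA}\int_{rB}P\ddd x\dd y$ under dilation about $C=O$ in two ways---once via homogeneity (the $r^{a+b+p}$ scaling) and once via the signed, face-by-face shell expansion of Remark~\ref{RemGeo}---and then collapse the shell sums into $P_{\partial A\,B}$ and $P_{A\,\partial B}$ using the weights of Definition~\ref{def:weights}. The only cosmetic difference is that you phrase the comparison as equating two expressions for $I'(1)$, whereas the paper matches the $\varepsilon$-coefficients of $P_{rA,rB}$ and of $\vol{rA}\vol{rB}P_{rA,rB}$; these are the same computation.
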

\begin{proof}
The formula is a special case of the extension of the Crofton theorem by Ruben and Reed \cite{ruben1973more}, although it is fairly simple to derive directly. Let $r = 1 + \varepsilon$ and put $C=O$ (the origin) without loss of generality. The key is to express $P_{rA,rB}$ in two different ways:
\begin{itemize}
    \item By definition,
        \begin{align*}
            P_{rA,rB} & = \expe{P(X,Y) \, |\, X\in rA, Y \in rB, \,\text{uniform}} = \expe{P(rX',rY') \, |\, X'\in A, Y' \in B, \,\text{uniform}} \\
            & = r^p \,\expe{P(X',Y') \, |\, X'\in A, Y' \in B, \,\text{uniform}} = r^p P_{AB} = P_{AB} + \varepsilon p P_{AB} + O(\varepsilon^2).
        \end{align*}
    \item On the other hand,
        \begin{align*}
            & \vol{rA} \vol{rB} P_{rA,rB} = \vol{rA} \vol{rB}\expe{P(X,Y) \, |\, X\in rA, Y \in rB, \,\text{uniform}}\\
            &= \int_{rA} \int_{rB} P(x,y) \ddd x \dd y = \int_{A} \int_{B} P(x,y) \ddd x \dd y + \int_{(rA,A)} \int_{B} P(x,y) \ddd x \dd y  \\
            & + \int_{A} \int_{(rB,B)} P(x,y) \ddd x \dd y + \int_{(rA,A)} \int_{(rB,B)} P(x,y) \ddd x \dd y  \\
            & = \volA \volB P_{AB} + \varepsilon \volB \sum_i \vol{\partial_iA} h_O(\partial_iA) P_{\partial_i\, A B} \\
            & + \varepsilon\volA \sum_j \vol{\partial_jB} h_O(\partial_jB) P_{A \, \partial_jB} +  O(\varepsilon^2).
        \end{align*}
\end{itemize}
Comparing the $\varepsilon$ terms of both expressions and using Remark \ref{RemGeo}, we get the statement of the lemma. If either of $\dim A$ or $\dim B$ is zero, the lemma holds too. 
\end{proof}

\subsection{Auxiliary integrals}
Apart from rotations and reflections, any integral encountered in our paper has the following form ($h>0$)
\begin{equation}
I^{(p)}_f(h,\zeta,\gamma)=\int_{D(\zeta,\gamma)} f(x,y) \left(h^2 + x^2 + y^2\right)^{p/2} \ddd x \dd y,
\end{equation}
where $D(\zeta,\gamma)$ is the \emph{fundamental triangle domain} with vertices $[0,0]$, $[\zeta,0]$, $[\zeta,\zeta\tan\gamma]$ ($\zeta > 0$, $0<\gamma<\pi/2$) and $f(x,y)$ is a polynomial in $x$ and $y$ of degree at most two (quadratic in $x$ and $y$). We can write $f(x,y) = a_{00} + a_{10}x + a_{01}y + a_{20} x^2+a_{11} x y + a_{02} y^2$. Based on $x$ and $y$ terms, we have the following
\begin{equation}
\begin{split}
I^{(p)}_f(h,\zeta,\gamma)& =
a_{00} I^{(p)}_{00}(h,\zeta,\gamma) + a_{10} I^{(p)}_{10}(h,\zeta,\gamma) + a_{01} I^{(p)}_{01}(h,\zeta,\gamma)\\
& + a_{20} I^{(p)}_{20}(h,\zeta,\gamma) + a_{11} I^{(p)}_{11}(h,\zeta,\gamma) + a_{02} I^{(p)}_{02}(h,\zeta,\gamma),
\end{split}
\end{equation}
where
\begin{equation}
I^{(p)}_{ij}(h,\zeta,\gamma) =\int_{D(\zeta,\gamma)} x^i y^j \left(h^2 + x^2 + y^2\right)^{p/2} \ddd x \dd y.
\end{equation}
The parameters of those integrals are not optimal. We only need to consider the case $h=1$. To see this, denote 
\begin{equation}
I^{(p)}_{ij}(q,\gamma) = I^{(p)}_{ij}(1,q,\gamma) =\int_{D(q,\gamma)} x^i y^j \left(1 + x^2 + y^2\right)^{p/2} \ddd x \dd y.
\end{equation}
By scaling $x \to h x$, $y \to h y$, we can write
\begin{equation}
I^{(p)}_{ij}(h,\zeta,\gamma) =h^{2+p+i+j} I^{(p)}_{ij}(\zeta/h,\gamma).
\end{equation}
Thus, with $q = \zeta/h$,
\begin{equation}
\begin{split}
I^{(p)}_f(h,\zeta,\gamma)& = h^{2+p} \big{[}
a_{00} I^{(p)}_{00}(q,\gamma) + a_{10}h I^{(p)}_{10}(q,\gamma) + a_{01}h I^{(p)}_{01}(q,\gamma)\\
& + a_{20}h^2 I^{(p)}_{20}(q,\gamma) + a_{11}h^2 I^{(p)}_{11}(q,\gamma) + a_{02}h^2 I^{(p)}_{02}(q,\gamma) \big{]}.
\end{split}
\end{equation}
Selected values of the auxiliary integrals $I^{(p)}_{ij}(q,\gamma)$ and the methods how we can derive them are found in the appendix.

\section{Irreducible configurations of the reduction technique}
Repeated use of the latter Lemma we refer to as the Crofton Reduction Technique. To find the moments of $L$, we put $P = L^p$. In the first step, we choose $A=K$ and $B=K$. Since the affine hulls of both $A$ and $B$ fill the whole space $\mathbb{R}^d$, any point in $\mathbb{R}^d$ can be selected for $C$. We then employ the reduction technique to express $P_{AB}$ in $P_{A'B'}$ where $A'$ and $B'$ have smaller dimensions then $A$ and $B$. The pairs of various $A'$ and $B'$ we encounter we call \textbf{configurations}. The process is repeated until the affine hull intersection of $A'$ and $B'$ is empty. In that case, we have reached an \textbf{irreducible} configuration. From now on in the rest of our paper if not explicitely stated, $d=3$. The following configurations are irreducible in $\mathbb{R}^3$:
\begin{itemize}
    \item $A$ is a polygon and $B$ is a point
    \item $A$ and $B$ are two skew line segments
    \item $A$ and $B$ are two parallel polygons or one polygon and one line segment parallel to it
\end{itemize}
\subsection{Polygon and a point}
In the first case, $A$ is a polygon and $B$ a point. Denote $\operatorname{proj}_A(\cdot)$ a perpendicular projection onto $\mathcal{A}(A)$. Next, denote $h$ the distance between $B$ and $\mathcal{A}(A)$. With $k = x - \operatorname{proj}_A B$ where $x \in A$, we have that
\begin{equation}
L_{AB}^{(p)} = \frac{1}{\volA} \int_A (h^2 + k^2)^{p/2} \ddd k
\end{equation}
is expressible in terms of elementary functions. To see this, write and $\partial_i A, i=1,\ldots,s$ for the sides of the polygon $A$, oriented such that the path through the vertices of $A$ is counterclockwise. Then, by inclusion/exclusion, and switching to polar coordinates
\begin{equation}
L^{(p)}_{AB} = \frac{1}{\volA} \sum_{i=1}^s\int_{T_i} (h^2+k^2)^{p/2}\ddd k = \frac{1}{\volA} \sum_{i=1}^s\int_{\alpha_i}^{\beta_i}\int_{0}^{h_i/\cos\varphi} (h^2+r^2)^{p/2}  r\ddd r \dd\varphi
\end{equation}
where $T_i$ is a signed triangle whose one vertex is the point $\operatorname{proj}B$ and the other two vertices are the consecutive endpoints of $\partial_i A$. Rescaling the vector $k$ by $h$, we can rewrite each integral in the sum in a standard way
\begin{equation}
\int_{T_i} (h^2+k^2)^{p/2}\ddd k = h^{2+p}\left(I^{(p)}_{00}\left(h_i/h,\beta_i\right) -I^{(p)}_{00}\left(h_i/h,\alpha_i\right)\right)
\end{equation}
where $\alpha_i$ and $\beta_i$ are their respective polar angles (in counterclockwise order) and $h_i$ is the perpendicular distance from $\operatorname{proj}B$ to $\partial_i A$. The polar angles are defined such that the closest point on the line $\mathcal{A}(\partial_i A)$ from $\operatorname{proj}B$ has its value equal to zero, increasing in the clockwise direction (see Figure \ref{fig:PolyPoint}). The integral is positive if the angle of consecutive vertices of the polygon increased and negative if it decreased.  
\begin{figure}[h]
    \centering
 \includegraphics[height=0.3\textwidth]{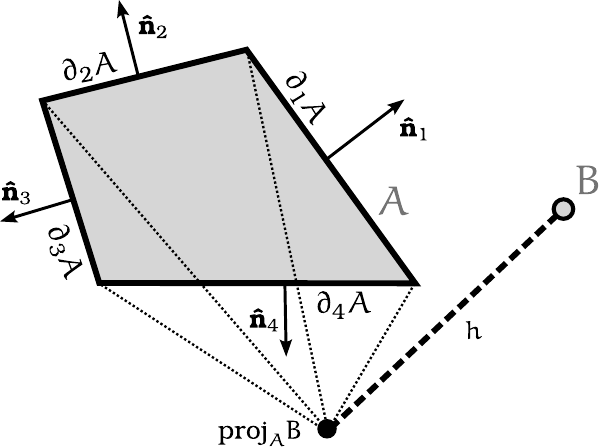}
    \caption{Point-polygon triangle decomposition}
    \label{fig:PolyPoint}
\end{figure}

\noindent
Summing all contributions, we finally get our \textbf{point-polygon formula}
\begin{equation}\label{LABpolypo}
L^{(p)}_{AB} = \frac{h^{2+p}}{\volA} \sum_{i=1}^s \left(I^{(p)}_{00}\left(h_i/h,\beta_i\right) -I^{(p)}_{00}\left(h_i/h,\alpha_i\right)\right).
\end{equation}

\subsection{Two skewed line segments}
The second case is in fact equivalent with the first. If $A$ and $B$ are two skew line segments, write $A-B = \{ u - v\, |\, u\in A, v \in B\}$ (which is a parallelogram). Then, by shifting, we get for any homogeneous $P$, denoting $O$ as the origin
\begin{equation}\label{ShiftEq}
P_{AB} = P_{O,A-B}.
\end{equation}
So we can always reduce this problem to the polygon and a point problem treated before.

\subsection{Overlap formula}
From now on, in case of no ambiguity, we often write simply $\operatorname{proj}$ instead of $\operatorname{proj}_A$ for the perpendicular projection operator onto $\mathcal{A}(A)$.

\begin{proposition}
Let $A,B \in \mathcal{P}_+(\mathbb{R}^3)$, $a=2$, $b \in \{1,2\}$, such that $\mathcal{A}(A)$ and $\mathcal{A}(B)$ are parallel with perpendicular separation vector $s$ having length $h = \|s\|$. Let $P(x,y)$ be homogeneous and let $k$ be a vector lying in the projection plane $\mathcal{A}(A)$, then
\begin{equation}
P_{AB} = \frac{1}{\volA \volB} \int_A \int_B \tilde{P}(x-y) \ddd x \dd y = \frac{1}{\volA \volB} \int_{\mathbb{R}^2} \tilde{P}(s+k) \vol{A\cap (\operatorname{proj}B + k)} \ddd k.
\end{equation}
Especially, for $P = L^p$, we get $L_{AB}^{(p)} = \frac{1}{\volA \volB} \int_{\mathbb{R}^2} (h^2 + k^2)^{p/2} \vol{A\cap (\operatorname{proj}B + k)} \ddd k$.
\end{proposition}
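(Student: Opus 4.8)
The plan is to prove only the second equality, since the first is nothing but the definition of $P_{AB}$ together with the homogeneity of $P$ (which supplies the $\tilde P$ with $P(x,y)=\tilde P(x-y)$), after cancelling the common factor $1/(\volA\volB)$ from both sides. The whole argument then rests on splitting the displacement $x-y$ into its components perpendicular and parallel to $\mathcal{A}(A)$. Because $\mathcal{A}(A)$ and $\mathcal{A}(B)$ are parallel, the perpendicular part $\operatorname{proj}y-y$ is the \emph{same} vector for every $y\in\mathcal{A}(B)$; calling it $s$, we have $s\perp\mathcal{A}(A)$, $\|s\|=h$, and $x-y=s+(x-\operatorname{proj}y)$. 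Setting $k=x-\operatorname{proj}y\in\mathcal{A}(A)$ gives $\tilde P(x-y)=\tilde P(s+k)$, which already accounts for the integrand on the right-hand side; for $P=L^p$ one has $s\perp k$, so $\|s+k\|^2=h^2+k^2$ and $\tilde P(s+k)=(h^2+k^2)^{p/2}$, yielding the stated special case.

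Next I would transport the inner integral from $B$ onto its projection $\operatorname{proj}B\subset\mathcal{A}(A)$. Since the two affine hulls are parallel, the restriction of $\operatorname{proj}$ to $\mathcal{A}(B)$ is an isometry onto its image, so it pushes the $b$-dimensional volume measure on $B$ forward to the $b$-dimensional volume measure on $\operatorname{proj}B$. Writing $z=\operatorname{proj}y$, this turns $\int_A\int_B\tilde P(x-y)\ddd x\dd y$ into $\int_A\int_{\operatorname{proj}B}\tilde P(s+x-z)\ddd x\dd z$, an integral living entirely inside the plane $\mathcal{A}(A)\cong\mathbb{R}^2$ (with $\ddd z$ being length if $b=1$ and area if $b=2$). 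I would then interchange the order of integration by Fubini and, for each fixed $z$, apply the translation $k=x-z$ (unit Jacobian) to the $x$-integral. This separates the kernel from the geometry,
\[
\int_A\int_B\tilde P(x-y)\ddd x\dd y=\int_{\mathbb{R}^2}\tilde P(s+k)\left(\int_{\operatorname{proj}B}\mathbf{1}[z+k\in A]\ddd z\right)\dd k,
\]
and the inner integral is precisely the $b$-dimensional measure of $\{z\in\operatorname{proj}B:z+k\in A\}=\operatorname{proj}B\cap(A-k)$. By translation invariance of volume this equals $\vol{A\cap(\operatorname{proj}B+k)}$, which is exactly the claimed identity.

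The only genuinely delicate point is to treat $b=2$ (where $\operatorname{proj}B+k$ is two-dimensional and the overlap is an area) and $b=1$ (where $\operatorname{proj}B+k$ is a segment and the overlap is a length) in one stroke. Phrasing both the push-forward and the final step in terms of the $b$-dimensional Hausdorff measure $\mathcal{H}^b$ handles the two cases simultaneously, and makes the isometry and translation-invariance claims immediate, so that $\vol{\cdot}$ in the conclusion is consistently read as $b$-dimensional measure. Everything else — the change of variables, the two applications of Fubini, and the evaluation of $\tilde P$ on orthogonal components — is routine, so I expect no further obstacle beyond this uniform bookkeeping across the two values of $b$.
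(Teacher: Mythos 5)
Your proof is correct and takes essentially the same route as the paper's: the paper likewise substitutes the difference vector and applies Fubini to rewrite $P_{AB}$ as $\frac{1}{\volA\volB}\int \tilde{P}(z)\,\vol{A\cap(B+z)}\ddd z$ over displacement vectors, then specializes to the parallel configuration, which is precisely your decomposition $x-y=s+k$ together with the overlap volume $\vol{A\cap(\operatorname{proj}B+k)}$. The only differences are cosmetic: you carry out the parallel-plane bookkeeping explicitly (projection as an isometry, $b$-dimensional Hausdorff measure for the two cases $b\in\{1,2\}$), and your orientation of $s$ and $k$ avoids the evenness assumption that the paper's proof invokes when reflecting the displacement variable.
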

\begin{figure}[h]
    \centering
     \includegraphics[height=0.20\textwidth]{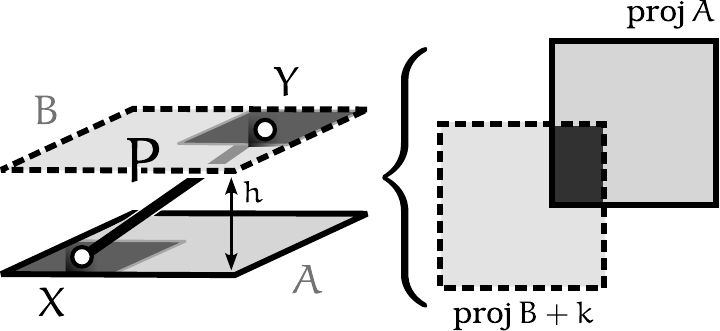}
    \caption{Overlap formula}
    \label{fig:Overlap}
\end{figure}
\begin{remark}
Since $\vol{A\cap (\operatorname{proj}B + k)}$ is a piece-wise polynomial function of degree at most two on polygonal domains, the double integral is expressible in terms of elementary functions for any integer $p > -3$.
\end{remark}

\begin{proof}
Let $A,B \subset \mathbb{R}^d$ be compact domains with dimensions $a$ and $b$, respectively, and $P$ be even homogeneous function $\mathbb{R}^d \to \mathbb{R}$ of order $p>-3$. Let $A_B(z) = A \cap (B+z)$, $c=\max_{z \in \mathbb{R}^d} \dim A_B(z)$ and $C=\{z \in \mathbb{R}^{d}\, |\, \dim A_B(z) = c\}$. Then, by substitution $y=x+z$ and by Fubini's theorem, \begin{equation}
P_{AB} = \frac{1}{\volA \volB}\int_{A}\int_{B} \tilde{P}(x-y) \ddd x \dd y = \frac{1}{\volA \volB}\int_{C} \tilde{P}(z) \operatorname{vol}A_B(z) \ddd z.
\end{equation}
When $A,B$ are parallel in $d=3$, the proposition follows.
\end{proof}

\begin{definition}
An \emph{overlap diagram} of $A$ (face) and $B$ (parallel face or edge) consists of partitions of $\mathbb{R}^2$ into open subdomains $D_i$ where $\vol{A\cap (\operatorname{proj}B + k)}$ can be expressed as a single polynomial function in $k$ of degree at most two. Since $A$ and $B$ are polygons (or a polygon and a polyline, respectively), these subdomains $D_i$ are also polygonal (polylinial, respectively). When there is no ambiguity, we denote those subdomains $D_i$ by numbers corresponding to the number of sides of the polygon $A\cap (\operatorname{proj}B + k)$ of intersection in case $B$ is a face, or the number of line segments of the polyline $A\cap (\operatorname{proj}B + k)$ of intersection when $B$ is an edge, respectively
\end{definition}

\begin{remark}
For brevity, we often write $\vol{A \cap \operatorname{proj}B + k}$ instead of $\vol{A \cap (\operatorname{proj}B + k)}$.
\end{remark}

\section{General and special polyhedra}
\subsection{General polyhedra}
\begin{theorem}\label{ThmBasic}
Let $K \in \mathcal{P}(\mathbb{R}^3)$, $E_j, F_k, j\in\{ 1,\ldots,e\}, k\in\{ 1,\ldots,f\}$, denote the edges and faces of $K$, respectively, and let $P: \mathbb{R}^d\times \mathbb{R}^d \to \mathbb{R}$ be even and homogeneous of order $p>-3$. Then
\begin{equation}
\label{eq:thmgen}
    P_{KK} = \frac{2}{(6+p)(5+p)} \bigg{(}\sum_{k<k'} P_{F_k F_{k'}} w_{F_k F_{k'}} + \sum_j P_{K E_j} w_{K E_j}\bigg{)},
\end{equation}
with weights $w_{AB}$ (independent on $P$ and $p$) given as follows: We fix $C$ any point in $\mathbb{R}^3$, $C_k$ any point on $\mathcal{A}(F_k)$ and $D_j$ any point on $\mathcal{A}(E_j)$. Denote $F_{k(j)}, F_{k'(j)}$ the two faces on which lies the edge $E_{j}$, then
\begin{align}
    w_{F_k F_{k'}} & = \frac{\vol{F_k}\vol{F_{k'}}}{\volKsq}(h_C( F_k) h_{C_k}(F_{k'}) + h_C(F_{k'}) h_{C_{k'}}( F_k)), \\
    w_{K E_j} & = \frac{\volK \vol{E_j}}{\volKsq} \left(h_C(F_{k(j)}) h_{C_{k(j)}}(D_j)+h_C(F_{k'(j)}) h_{C_{k'(j)}}(D_j)\right).
\end{align}
\end{theorem}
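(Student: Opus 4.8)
The plan is to apply the Crofton Reduction Technique (CRT) of the preceding Lemma exactly twice and then reorganize the resulting sums. First I would apply the CRT with $A=B=K$. Since $\dim K = 3$ and $\mathcal{A}(K)=\mathbb{R}^3$, every $C\in\mathbb{R}^3$ is an admissible scaling point, and because $P$ is even we have $P_{\partial K\,K}=P_{K\,\partial K}$, so the Lemma collapses to $(p+6)P_{KK}=6\,P_{\partial K\,K}$. Expanding $P_{\partial K\,K}$ by Definition~\ref{def:weights} (with $a=3$ and scaling point $C$) gives $P_{\partial K\,K}=\sum_k \frac{\vol{F_k}}{3\volK}h_C(F_k)P_{F_k K}$, hence
\[
P_{KK}=\frac{2}{(p+6)\volK}\sum_k \vol{F_k}\,h_C(F_k)\,P_{F_k K}.
\]

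Next I would reduce each mixed term $P_{F_k K}$ by a second CRT, now with $A=F_k$ ($a=2$) and $B=K$ ($b=3$). Here the admissible scaling point must lie in $\mathcal{A}(F_k)\cap\mathbb{R}^3=\mathcal{A}(F_k)$, so I take $C_k\in\mathcal{A}(F_k)$; this is the source of the \emph{second} scaling point in the statement. The Lemma yields $(p+5)P_{F_k K}=2P_{\partial F_k\,K}+3P_{F_k\,\partial K}$. Expanding $P_{\partial F_k\,K}$ over the edges $E_j\subset F_k$ and $P_{F_k\,\partial K}$ over the faces $F_{k'}$, the key observation is that the self-face term $k'=k$ vanishes, because $h_{C_k}(F_k)=0$ whenever $C_k$ lies in the plane of $F_k$. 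This is precisely what ensures no $P_{F_kF_k}$ ever appears. After simplifying the combinatorial prefactors I obtain
\[
(p+5)P_{F_k K}=\sum_{j:\,E_j\subset F_k}\frac{\vol{E_j}}{\vol{F_k}}h_{C_k}(E_j)P_{E_j K}+\sum_{k'\neq k}\frac{\vol{F_{k'}}}{\volK}h_{C_k}(F_{k'})P_{F_k F_{k'}}.
\]

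Substituting this into the expression for $P_{KK}$ produces the global factor $\tfrac{2}{(p+6)(p+5)}$ together with a double sum over \emph{ordered} face pairs and a sum over (face, edge) incidences. For the face part I would invoke evenness $P_{F_k F_{k'}}=P_{F_{k'}F_k}$ to fold the ordered sum over $k\neq k'$ into an unordered sum over $k<k'$; the two orderings contribute exactly the two summands $h_C(F_k)h_{C_k}(F_{k'})$ and $h_C(F_{k'})h_{C_{k'}}(F_k)$ that combine into $w_{F_k F_{k'}}$. For the edge part I would use that each edge $E_j$ lies on exactly two faces $F_{k(j)},F_{k'(j)}$, so it is counted twice in $\sum_k\sum_{j:\,E_j\subset F_k}$, yielding the two summands of $w_{KE_j}$; writing $h_{C_k}(E_j)=h_{C_k}(D_j)$ with reference point $D_j\in\mathcal{A}(E_j)$ matches the stated weights verbatim.

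The computation is essentially careful bookkeeping rather than a deep argument, so I expect the main obstacles to be organizational: correctly tracking the two distinct scaling points $C$ (global) and $C_k$ (per face) through the symmetrization, verifying the CRT admissibility condition $C\in\mathcal{A}(A)\cap\mathcal{A}(B)$ at each of the two reductions, and justifying the vanishing of the self-face term that suppresses all $P_{F_kF_k}$ contributions. The hypothesis $p>-3$ enters only to guarantee that every mean value involved—especially $P_{KK}$, whose integrand $\|x-y\|^p$ is singular on the diagonal—is finite, so that the identity and each intermediate reduction are well defined.
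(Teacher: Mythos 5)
Your proposal is correct and is exactly the paper's approach: the paper's proof consists of the single instruction ``Use the Crofton Reduction Technique twice,'' and your two applications (first with $A=B=K$ and scaling point $C$, then with $A=F_k$, $B=K$ and scaling point $C_k\in\mathcal{A}(F_k)$), together with the vanishing of $h_{C_k}(F_k)$ and the folding of ordered face pairs by evenness, are precisely the bookkeeping that instruction leaves implicit.
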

\begin{proof}
Use the Crofton Reduction Technique twice.
\end{proof}
\begin{remark}
Note that the weights are not unique as they depend on the position of scaling points.
\end{remark}
\begin{remark}
Note that if $P = L^p$ and for any polyhedron $K$, all terms $P_{AB}$ in Equation \eqref{eq:thmgen} are either further reducible or $A$ and $B$ are parallel. In both cases, we can express $L^{(p)}_{AB}$ in terms of auxiliary integrals. Theorem \ref{mainthm} follows.
\end{remark}

\subsection{Nonparallel polyhedra}
For polyhedra which have some special properties, we are able to further reduce Theorem \ref{ThmBasic} above.
\begin{definition}
Let $\mathcal{P}^*(\mathbb{R}^3)$ denote the set of all polyhedra having the property that affine hulls of any of its three faces of meet at a single point. We call them \textbf{nonparallel polyhedra}. Also, we denote $\mathcal{P}_{\mathrm{convex}}^*(\mathbb{R}^3)$ a subset of those which are convex.
\end{definition}

\begin{theorem}\label{ThmNonpar}
Let $K \in \mathcal{P}^*(\mathbb{R}^3)$ and $V_i, E_j, F_k, i\in\{ 1,\ldots,v\}, j\in\{ 1,\ldots,e\}, k\in\{ 1,\ldots,f\}$, denote the vertices, edges and faces of $K$, respectively, and $P$ be even and homogeneous of order $p>-3$. Then
\begin{equation}
    P_{KK} = \frac{12}{(6+p)(5+p)(4+p)(3+p)} \bigg{(}\sum_{\substack{ik \\ V_i \notin \mathcal{A}(F_k)}} P_{V_i F_k} w_{V_i F_k} + \sum_{\substack{j<j' \\ \mathcal{A}(E_j)\cap \mathcal{A}(E_{j'}) = \emptyset}} P_{E_j E_{j'}} w_{E_j E_{j'}}\bigg{)}
\end{equation}
for some weights $w_{AB}$ which are independent on $P$ and $p$.
\end{theorem}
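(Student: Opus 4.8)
The plan is to build on the two-fold reduction already recorded in Theorem~\ref{ThmBasic} and apply the Crofton Reduction Technique twice more, once to each of the two families $P_{F_k F_{k'}}$ and $P_{K E_j}$ appearing there. Each application of the lemma to a configuration $(A,B)$ lowers the total dimension $\dim A + \dim B$ by one and contributes the scalar $(p+\dim A+\dim B)$; since this sum descends from $6$ (for $K,K$) down to $2$, the four reduced configurations have sums $6,5,4,3$ and account for the product $(6+p)(5+p)(4+p)(3+p)$. Concretely, I would reduce each face--face term $P_{F_k F_{k'}}$ first to edge--face terms (peeling edges off both faces) and then to vertex--face and edge--edge terms; in parallel I would reduce each edge--polyhedron term $P_{K E_j}$ first to edge--face terms (peeling faces off $K$) and vertex--polyhedron terms (peeling the endpoints off $E_j$), and then reduce both once more down to vertex--face and edge--edge terms. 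After collecting, the only surviving configurations are $P_{V_i F_k}$ (a polygon and a point) and $P_{E_j E_{j'}}$ (two segments), precisely the irreducible cases listed earlier.

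The role of the hypothesis $K \in \mathcal{P}^*(\mathbb{R}^3)$ is to guarantee that every intermediate configuration admits an admissible scaling point, i.e. that $\mathcal{A}(A)\cap\mathcal{A}(B)\neq\emptyset$ at each step so that the reduction never stalls before the dimension sum reaches $2$. The two nontrivial cases are the face--face and edge--face configurations. For two faces, $\mathcal{A}(F_k)\cap\mathcal{A}(F_{k'})$ is a line because no two face-planes are parallel; for an edge and a face, writing $\mathcal{A}(E_j)=\mathcal{A}(F_{k(j)})\cap\mathcal{A}(F_{k'(j)})$ shows that $\mathcal{A}(E_j)\cap\mathcal{A}(F_k)$ is an intersection of three face-planes, which by the definition of a nonparallel polyhedron is a single point. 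The same remark excludes the degenerate possibility that an edge line is contained in a third face-plane (that would force three planes to share a whole line), so every surviving edge--face term has a genuine one-point affine-hull intersection.

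The restrictions beneath the two sums are then produced by a judicious choice of the scaling point $C$. When reducing an edge--face term $P_{E_j F_k}$ I would take $C$ to be the unique point of $\mathcal{A}(E_j)\cap\mathcal{A}(F_k)$: if the line $\mathcal{A}(E_{j'})$ of an edge $E_{j'}\subset F_k$ meets $\mathcal{A}(E_j)$, their intersection point lies in $\mathcal{A}(E_j)\cap\mathcal{A}(F_k)=\{C\}$ and hence equals $C$, so the signed distance $h_C(E_{j'})=0$ and the corresponding edge--edge term drops out. Thus only skew pairs, i.e. $\mathcal{A}(E_j)\cap\mathcal{A}(E_{j'})=\emptyset$, contribute, exactly as stated. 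Likewise the affine hull of a vertex is the point itself, so a vertex--polyhedron term $P_{V_i K}$ can only be scaled at $C=V_i$; reducing $K$ to its faces then assigns each face a weight proportional to $h_{V_i}(F_k)$, which vanishes whenever $V_i\in\mathcal{A}(F_k)$, leaving precisely the terms with $V_i\notin\mathcal{A}(F_k)$. The very same vanishing mechanism also removes the would-be edge--face terms in which the edge lies on the face (the shared-edge case).

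Finally I would carry the weights through the four reductions just as in the proof of Theorem~\ref{ThmBasic}: each step multiplies by a factor of the shape $\vol{\partial_i A}\,h_C(\partial_i A)/\volA$ coming from Definition~\ref{def:weights}, and assembling the contributions of the finitely many reduction paths that terminate at a given irreducible pair yields the symmetric, $P$- and $p$-independent weights $w_{V_iF_k}$ and $w_{E_jE_{j'}}$, together with the overall constant $\tfrac{12}{(6+p)(5+p)(4+p)(3+p)}$. The main obstacle is precisely this bookkeeping: a fixed skew edge pair or vertex--face pair is reached along several distinct branches of the reduction tree, so one must verify that the accumulated signed-distance factors collect into a single well-defined weight and that the numerical prefactors combine to the stated constant. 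The geometric input that makes everything close up---and the only place the nonparallel hypothesis is genuinely used---is the single-point intersection of any three face-planes.
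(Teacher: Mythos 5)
Your proposal is correct and takes essentially the same route as the paper's proof: the paper likewise starts from Theorem \ref{ThmBasic}, reduces the $P_{F_k F_{k'}}$ and $P_{K E_j}$ terms twice more by CRT with scaling points chosen in $\mathcal{A}(F_k)\cap\mathcal{A}(F_{k'})$ and in $\mathcal{A}(E_i)\cap\mathcal{A}(F_k)$, and arrives at exactly the vertex--face and skew edge--edge terms with the factor $(6+p)(5+p)(4+p)(3+p)$ accumulating one denominator per reduction step. Your explicit vanishing-weight mechanism (the scaling point lies on the affine hull of the offending boundary component, so its signed distance is zero, which kills incident vertex--face pairs, intersecting edge pairs, and the shared-edge case) is exactly what the paper leaves implicit, so if anything you have supplied more detail than the published argument.
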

\begin{proof}
Since no pair of faces nor edges are parallel, we can further reduce $P_{F_k F_{k'}}$ and $P_{K E_j}$ from Theorem \ref{ThmBasic} twice. The weights are easily computable by choosing appropriate scaling points. Note that again the weights are not unique and depend on the selection of those scaling points. For example, let $C \in \mathcal{A}(F_k) \cap \mathcal{A}(F_{k'})$, $k<k'$. Then by CRT, we get
\begin{equation}
P_{F_k F_{k'}} = \frac{2}{4+p} \left( P_{\partial F_k F_{k'}} + P_{F_k \partial F_{k'}} \right).
\end{equation}
Note that both $P_{\partial F_k F_{k'}}$ and $P_{F_k \partial F_{k'}}$ are expressible as some linear combination of $P_{E_i F_k}$ with $\mathcal{A}(E_i) \cap \mathcal{A}(F_k)$. Finally, we can reduce even this term. Let $C' \in \mathcal{A}(E_i) \cap \mathcal{A}(F_k)$, then 
\begin{equation}
P{E_i F_k} = \frac{1}{2+p}\left( P_{\partial E_i F_k} + 2 P_{E_i \partial F_k} \right),
\end{equation}
which in turn is expressible as a linear combination of $P_{V_i F_k}$ and $P_{E_i E_{i'}}$ with $V_i \notin \mathcal{A}(F_k)$ and $\mathcal{A}(E_i) \cap \mathcal{A}(E_{i'})=\emptyset$. The reduction of terms $P_{KE_j}$ is similar.
\end{proof}

\subsection{Nonparallel convex polyhedra}
In the case of convex nonparallel polyhedra, we can find very simple relations for weights $w_{AB}$. First, we start with a known formula (a simple generalisation of \cite[Eq. 34]{kingman1969random} in $d=3$)
\begin{lemma}
\label{lem:king}
Let $K$ be a convex and compact set in $\mathbb{R}^3$ and $P$ even homogeneous of order $p>-3$, then
\begin{equation}
P_{KK} = \frac{1/\volKsq}{(4+p)(3+p)} \oint_{B_1(0)} \int_{\Sigma_\perp(\uvect{r})} \tilde{P}(\uvect{r})\, \Lambda_Q^{4+p}(\uvect{r}) \ddd Q \dd\uvect{r},
\end{equation}
where the integration in carried over all directions $\uvect{r}$ on the unit sphere $B_1(0)$ with surface measure $\dd \uvect{r}$ having $\oint \dd \uvect{r} = 4\pi$ and over all points $Q$ on plane $\Sigma_\perp(\uvect{r})$ passing through the origin and being perpendicular to $\uvect{r}$. Finally, $\Lambda_Q(\uvect{r})$ denotes the length of the intersection of $K$ and the line passing through $Q$ in the direction of unit vector $\uvect{r}$.
\end{lemma}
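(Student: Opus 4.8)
The plan is to begin from the definition
\[
P_{KK} = \frac{1}{\volKsq}\int_K\int_K \tilde{P}(x-y)\ddd x \dd y
\]
and to convert this double integral over pairs of points into an integral over the chords of $K$. This is an instance of the classical Blaschke--Petkantschin change of variables, but here it is cleanest to produce the required factorisation by hand, using spherical coordinates for the difference vector $y-x$ together with an orthogonal splitting of the remaining point relative to the chosen direction.

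Concretely, I would proceed in the following order. First, for fixed $x$, substitute $y = x + \rho\,\uvect{r}$ with $\rho \geq 0$ and $\uvect{r}$ on the unit sphere, so that $\ddd y = \rho^{2}\dd\rho\,\dd\uvect{r}$ in $\mathbb{R}^3$ and the map $(x,y)\mapsto(x,\rho,\uvect{r})$ is a bijection off the diagonal. By homogeneity and evenness, $\tilde{P}(x-y) = \tilde{P}(-\rho\,\uvect{r}) = \rho^{p}\,\tilde{P}(\uvect{r})$, which extracts the $P$-dependence from the radial and positional integrals and leaves the purely geometric factor $\rho^{p+2}$. Next, for a fixed direction $\uvect{r}$, decompose the remaining point as $x = Q + t\,\uvect{r}$ with $Q \in \Sigma_\perp(\uvect{r})$ and $t \in \mathbb{R}$, so that $\ddd x = \dd Q\,\dd t$ by Fubini. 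Here convexity enters decisively: the slice $\{\, t : Q + t\,\uvect{r} \in K \,\}$ is a single interval $[t_-,t_+]$ of length $\Lambda_Q(\uvect{r})$, and the joint requirements $x\in K$, $x+\rho\,\uvect{r}\in K$, $\rho\geq 0$ collapse to $t \in [t_-,t_+]$ and $\rho \in [0,\,t_+ - t]$. The inner double integral is then elementary,
\[
\int_{t_-}^{t_+}\!\int_0^{\,t_+ - t} \rho^{\,p+2}\,\dd\rho\,\dd t = \frac{\Lambda_Q^{\,4+p}(\uvect{r})}{(4+p)(3+p)},
\]
and collecting the surviving integrations over $Q$ and $\uvect{r}$, then dividing by $\volKsq$, reproduces the stated identity.

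I expect the only genuine subtleties to be integrability and the use of convexity. The radial integral converges at $\rho = 0$ exactly when $p+2 > -1$, i.e.\ $p > -3$, which is precisely the hypothesis; this is also where the denominator $(4+p)(3+p)$ originates. Convexity is what allows the constraints to reduce to a single interval: for non-convex $K$ the slice would be a finite union of intervals and the admissible $\rho$-range would fragment, so the closed evaluation to a single power $\Lambda_Q^{\,4+p}$ would fail. The remaining points are routine: justifying the change of variables and Fubini (the integrand is absolutely integrable for $p>-3$ since $K$ is bounded), and noting that evenness of $P$ is used only to record the angular dependence as $\tilde{P}(\uvect{r})$ rather than $\tilde{P}(-\uvect{r})$, the two being equal.
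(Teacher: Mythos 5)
Your proof is correct. Note, however, that the paper itself gives no proof of this lemma: it is stated as a known formula, with a citation to Kingman's Eq.~(34), of which it is the $d=3$ generalisation to arbitrary even homogeneous $P$. Your argument supplies the missing derivation, and it is the natural one: the substitution $y = x + \rho\,\uvect{r}$ over the full sphere (each off-diagonal pair counted exactly once), extraction of $\rho^{p}\,\tilde P(\uvect{r})$ by homogeneity and evenness, the orthogonal splitting $x = Q + t\,\uvect{r}$, and the observation that convexity reduces the joint membership constraints to $t\in[t_-,t_+]$, $\rho\in[0,\,t_+-t]$, whence $\int_{t_-}^{t_+}\int_0^{\,t_+-t}\rho^{p+2}\,\dd\rho\,\dd t = \Lambda_Q^{4+p}(\uvect{r})/\bigl((3+p)(4+p)\bigr)$. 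This is essentially a hand-rolled Blaschke--Petkantschin decomposition, i.e.\ the same mechanism underlying Kingman's original formula, so the two routes differ only in that yours is self-contained while the paper's is a literature pointer. Your derivation has the merit of making explicit exactly where $p>-3$ enters (convergence of the radial integral at $\rho=0$) and where convexity enters (a single interval per line, hence a single power of $\Lambda_Q$); a sanity check at $p=0$, $\tilde P\equiv 1$, where both sides equal $1$, confirms that the full-sphere normalisation $\oint\dd\uvect{r}=4\pi$ is consistent with the constant $1/\bigl((4+p)(3+p)\bigr)$. The only point worth tightening is that Fubini requires absolute integrability, which needs $\tilde P$ to be bounded (or at least integrable) on the unit sphere; this holds in the only case used in the paper, $P=L^p$, and is implicitly assumed by the paper as well.
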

\begin{corollary}
By Fubini's theorem,
\begin{equation}
    \lim_{p \rightarrow -3^+} (3+p)P_{KK} = \frac{1}{\volK} \oint_{B_1(0)} \tilde{P}(\uvect{r})\, \dd\uvect{r}
\end{equation}
\end{corollary}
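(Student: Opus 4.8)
The plan is to start from Lemma \ref{lem:king} and multiply both sides by the factor $(3+p)$, which exactly cancels the matching $(3+p)$ sitting in the denominator of the prefactor. This yields
\begin{equation*}
(3+p) P_{KK} = \frac{1/\volKsq}{4+p} \oint_{B_1(0)} \int_{\Sigma_\perp(\uvect{r})} \tilde{P}(\uvect{r})\, \Lambda_Q^{4+p}(\uvect{r}) \ddd Q \dd\uvect{r}.
\end{equation*}
As $p \to -3^+$ the scalar prefactor $1/(4+p)$ tends to $1$, so the whole problem reduces to evaluating the limit of the double integral, in which the exponent $4+p$ approaches $1$. The point of isolating the residue at $p=-3$ is precisely that $P_{KK}$ itself blows up like $1/(3+p)$ there (the integral of $\|x-y\|^p$ over $K\times K$ ceases to converge at $p=-3$ in three dimensions), while $(3+p)P_{KK}$ stays finite.

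The heart of the argument is the observation that in the limit $4+p \to 1$ the inner integrand becomes $\Lambda_Q(\uvect{r})$, the chord length of $K$ along the line through $Q$ in direction $\uvect{r}$. Since $\tilde{P}(\uvect{r})$ does not depend on $Q$, I would pull it outside the inner integral and invoke Fubini's theorem (equivalently, Cavalieri's principle): slicing $K$ into the lines parallel to $\uvect{r}$ that pass through the points $Q$ of the perpendicular plane $\Sigma_\perp(\uvect{r})$ shows that $\int_{\Sigma_\perp(\uvect{r})} \Lambda_Q(\uvect{r}) \ddd Q = \volK$ for \emph{every} direction $\uvect{r}$. Substituting this identity collapses one factor of $\volK$ against $\volKsq$, leaving
\begin{equation*}
\lim_{p \to -3^+}(3+p) P_{KK} = \frac{1}{\volKsq} \oint_{B_1(0)} \tilde{P}(\uvect{r})\, \volK \dd\uvect{r} = \frac{1}{\volK} \oint_{B_1(0)} \tilde{P}(\uvect{r}) \dd\uvect{r},
\end{equation*}
which is the claimed formula.

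The one step genuinely requiring care is the interchange of the limit $p \to -3^+$ with the integration, so that $\Lambda_Q^{4+p}$ may be replaced by its pointwise limit $\Lambda_Q$. I would justify this by dominated convergence: for $p$ in a neighbourhood of $-3$ the exponent $4+p$ stays near $1$ and $\Lambda_Q$ is bounded above by the diameter of $K$, so $\Lambda_Q^{4+p}$ is uniformly dominated by a constant; meanwhile $\tilde{P}$ is bounded on the compact sphere $B_1(0)$, and for each $\uvect{r}$ the effective domain $\{Q : \Lambda_Q(\uvect{r})>0\}$ has finite measure, being the orthogonal projection of the bounded set $K$ onto $\Sigma_\perp(\uvect{r})$. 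I therefore expect the main (and essentially only) obstacle to be this uniform-integrability bookkeeping near the singular value $p=-3$; once dominated convergence is in place, the Fubini identity for the chord integral does the rest.
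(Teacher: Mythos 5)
Your proposal is correct and matches the paper's intended argument exactly: the paper's entire justification is the phrase ``By Fubini's theorem,'' which refers precisely to the identity $\int_{\Sigma_\perp(\uvect{r})} \Lambda_Q(\uvect{r}) \ddd Q = \volK$ that you use to collapse the inner integral after multiplying Lemma \ref{lem:king} by $(3+p)$ and letting $p \to -3^+$. Your dominated-convergence bookkeeping (bounding $\Lambda_Q$ by the diameter of $K$ and restricting to the projection of $K$ onto $\Sigma_\perp(\uvect{r})$) is a detail the paper leaves implicit, but it is the right justification for the interchange of limit and integration.
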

\begin{remark}
    Similar formulae are available in higher dimensions as well.
\end{remark}

\begin{theorem}
\label{th:nonpcon}
Let $K \in \mathcal{P}_{\mathrm{convex}}^*(\mathbb{R}^3)$ and $V_i, E_j, F_k, P, w_{AB}$ be defined exactly as in Theorem \ref{ThmNonpar}. Denote $h_{ik}$ the distance between $V_i$ and $\mathcal{A}(F_k)$, similarly denote $h_{jj'}$ the distance between $O$ and $\mathcal{A}(E_j-E_{j'})$ and $\theta_{jj'}$ the angle between $E_j$ and $E_{j'}$ (on the same plane under perpendicular projection). Then
\begin{equation}
\begin{split}
    P_{KK} & = \frac{12/\volK}{(6+p)(5+p)(4+p)(3+p)} \bigg{(}\sum_{\substack{ik \\ V_i \notin \mathcal{A}(F_k)}} P_{V_i F_k} n_{V_i F_k} \vol{F_k}h_{ik} \,\,\\
    &+ \!\!\!\! \sum_{\substack{j<j' \\ \mathcal{A}(E_j)\cap \mathcal{A}(E_{j'}) = \emptyset}} \! P_{E_j E_{j'}} n_{E_j E_{j'}} \vol{E_j}\vol{E_{j'}}h_{jj'}\sin\theta_{jj'}\bigg{)},
\end{split}
\end{equation}
with weights $n_{AB}$ satisfying the following projection relation: Choose a direction $\uvect{n}$ and project $K$ onto a plane perpendicular to it. Then the weights corresponding to vertex-face pairs which overlap and to pairs of edges which cross add up to one. Symbolically,
\begin{equation}\label{EqWeightsN}
    1 = \sum_{\substack{ik \\ V_i \notin \mathcal{A}(F_k)}} n_{V_i F_k}1_{\uvect{n} \in V_iF_k} + \sum_{\substack{j<j' \\ \mathcal{A}(E_j)\cap \mathcal{A}(E_{j'}) = \emptyset}} n_{E_j E_{j'}} 1_{\uvect{n}\in E_j E_{j'}},
\end{equation}
where $1_{\uvect{n}\in AB}$ = 1 if there are points $x\in A, y\in B$ such that $x-y$ is parallel with $\uvect{n}$, otherwise $1_{\uvect{n}\in AB} = 0$. On top of that, the extreme case where one of the points $x,y$ lies on the boundary of $A$ or $B$ leaves the value $1_{\uvect{n}\in AB}$ undefined.
\end{theorem}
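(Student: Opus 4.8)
The plan is to obtain the displayed formula from Theorem~\ref{ThmNonpar} by making its implicit weights explicit, and then to extract the normalisation \eqref{EqWeightsN} from the behaviour as $p\to-3^+$. Starting from Theorem~\ref{ThmNonpar}, each weight $w_{AB}$ is a product of volume ratios and signed distances accumulated through the two CRT reductions. Choosing the scaling points on the relevant affine hulls and using convexity (so that the signed distances keep a consistent sign), I would factor each weight as $w_{AB}=\tfrac{1}{\volK}\,n_{AB}\cdot g_{AB}$, with geometric part $g_{V_iF_k}=\vol{F_k}h_{ik}$ for a vertex--face pair and $g_{E_jE_{j'}}=\vol{E_j}\vol{E_{j'}}h_{jj'}\sin\theta_{jj'}$ for a skew edge pair; the latter uses that, after the shift $P_{E_jE_{j'}}=P_{O,E_j-E_{j'}}$, the displacement set $E_j-E_{j'}$ is a parallelogram of area $\vol{E_j}\vol{E_{j'}}\sin\theta_{jj'}$ lying at distance $h_{jj'}$ from $O$. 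A dimensional count already shows each $w_{AB}$ must be dimensionless, which is consistent with this factorisation and defines the dimensionless $n_{AB}$; the geometric factors are in fact forced by the cancellation in the next step.

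The normalisation is the real content, and I would prove it with the corollary to Lemma~\ref{lem:king}. Both irreducible terms are averages of $\tilde P$ over a planar polygon viewed from an apex: $P_{V_iF_k}=\tfrac{1}{\vol{F_k}}\int_{F_k}\tilde P(V_i-y)\ddd y$, and, after the shift $P_{E_jE_{j'}}=P_{O,E_j-E_{j'}}$, the average of $\tilde P$ over the difference parallelogram $E_j-E_{j'}$, whose area is $\vol{E_j}\vol{E_{j'}}\sin\theta_{jj'}$ and which lies at distance $h_{jj'}$ from $O$. For any planar polygon $G$ at distance $h$ from the apex $O$, writing $w=\rho\uvect r$ gives $\ddd w=(\rho^3/h)\ddd\uvect r$ and $\tilde P(w)=\rho^p\tilde P(\uvect r)$, so in both cases the term equals $\tfrac{1}{g_{AB}}\int_{\Omega_{AB}}\tilde P(\uvect r)\,\rho^{3+p}\ddd\uvect r$, with $g_{AB}$ the geometric factor above and $\Omega_{AB}$ the solid angle that the polygon subtends at $O$. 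Letting $p\to-3^+$ removes the factor $\rho^{3+p}$, leaving $\lim_{p\to-3^+}P_{AB}=\tfrac{1}{g_{AB}}\int_{\Omega_{AB}}\tilde P(\uvect r)\ddd\uvect r$.

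Substituting these limits into the theorem's formula, the geometric prefactors $g_{AB}$ cancel exactly, while the scalar prefactor tends to $\tfrac{12/\volK}{(6+p)(5+p)(4+p)}\to\tfrac{2}{\volK}$ once the $(3+p)$ is absorbed by the corollary. Comparing with $\lim_{p\to-3^+}(3+p)P_{KK}=\tfrac{1}{\volK}\oint_{B_1(0)}\tilde P(\uvect r)\ddd\uvect r$ then yields
\[
\oint_{B_1(0)}\tilde P(\uvect r)\ddd\uvect r=2\sum_{ik} n_{V_iF_k}\!\int_{\Omega_{V_iF_k}}\!\tilde P\ddd\uvect r+2\sum_{j<j'} n_{E_jE_{j'}}\!\int_{\Omega_{E_jE_{j'}}}\!\tilde P\ddd\uvect r.
\]
Because $P$ is even, $2\int_{\Omega_{AB}}\tilde P=\int_{\Omega_{AB}\cup(-\Omega_{AB})}\tilde P$, and by the very definition of the indicator one has $\Omega_{AB}\cup(-\Omega_{AB})=\{\uvect r:1_{\uvect r\in AB}=1\}$. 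Hence the identity becomes $\oint_{B_1(0)}\tilde P\ddd\uvect r=\oint_{B_1(0)}\big(\sum_{ik}n_{V_iF_k}1_{\uvect r\in V_iF_k}+\sum_{j<j'}n_{E_jE_{j'}}1_{\uvect r\in E_jE_{j'}}\big)\tilde P\ddd\uvect r$, valid for every even homogeneous $P$.

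Since the angular profile $\tilde P|_{B_1(0)}$ can be prescribed to be an arbitrary even function on the sphere, the integrated identity forces the bracketed sum to equal $1$ for almost every direction, which is exactly \eqref{EqWeightsN}; the finitely many degenerate directions where some $x$ or $y$ hits a boundary form a null set and are precisely the ones left undefined in the statement. I expect the main obstacles to be twofold: the sign and orientation bookkeeping in the weight factorisation of the first paragraph (making sure convexity really delivers a single consistent sign, so that the $g_{AB}$ come out positive and the cones $\Omega_{AB}$ tile each direction with the multiplicity recorded by the indicators), and the passage from the integral identity to the pointwise statement, which needs the density of admissible angular profiles among even functions. As an alternative that would give the theorem for all $p$ at once, at the price of heavier computation, one could instead evaluate $\int_{\Sigma_\perp(\uvect r)}\Lambda_Q^{4+p}\ddd Q$ in Lemma~\ref{lem:king} directly by a twofold Stokes reduction localising to the vertices of the shadow partition, namely the projections of the $V_i$ and the transverse crossings of projected edges $E_j,E_{j'}$, reproducing the two sums together with explicit local multiplicities $n_{AB}$.
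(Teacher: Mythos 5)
Your proposal is correct and follows essentially the same route as the paper's own proof: both hinge on the fact that the weights in Theorem~\ref{ThmNonpar} are independent of $P$ and $p$, both rewrite the irreducible vertex--face and shifted edge--edge averages as solid-angle integrals so that the geometric factors $\vol{F_k}h_{ik}$ and $\vol{E_j}\vol{E_{j'}}h_{jj'}\sin\theta_{jj'}$ cancel, and both compare the $p\to-3^+$ limit of the two sides via the corollary of Lemma~\ref{lem:king}. The only difference is the final localization step: the paper substitutes the specific even homogeneous test function $R^{(p)}$ whose angular profile is the indicator of a small double cone about $\uvect{n}$ and reads off \eqref{EqWeightsN} pointwise from the $\varepsilon$-asymptotics, whereas you keep an arbitrary even angular profile and finish with a density/duality argument (plus the implicit upgrade from almost-every to every non-degenerate direction, which follows since the indicator sum is locally constant off the degenerate set) -- instantiating your argument with profiles concentrated near a fixed direction recovers exactly the paper's computation.
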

\begin{proof}
The key observation is that the weights are independent of the choice of the function $P$ as long it is even and homogeneous. Let $\varepsilon>0$ be small and $\uvect{n}$ be a fixed unit vector, $\Omega_\varepsilon = \pi \varepsilon^2 + O(\varepsilon^4)$ then denotes a solid angle with apex half angle equal to $\varepsilon$. We define $R^{(p)}(\varepsilon,\uvect{n},x,y) = \| x-y\|^p$ if the angle between $\uvect{r}$ and $x-y$ is smaller than $\varepsilon$ and zero otherwise. Alternatively, denote $C(\varepsilon,V,\uvect{n})$ a double-cone region whose vertex is $V$, apex angle $2\varepsilon$ and the axis has direction $\uvect{n}$. Then for any domains $A$ and $B$,
\begin{equation}
R^{(p)}_{AB}(\varepsilon,\uvect{n}) = \int_{A}\int_{B} R^{(p)}(\varepsilon,\uvect{n},x,y) \ddd y \ddd x = \int_{A} \int_{B \cap C(\varepsilon,x,\uvect{n})}\| x-y\|^p \ddd y \ddd x.
\end{equation}
Note that $R$ is even and homogeneous in $x,y$ of order $p$. Hence, by Lemma \ref{lem:king},
\begin{equation}\label{EqLimR}
    \lim_{p \rightarrow -3^+} (3+p)R^{(p)}_{KK}(\varepsilon,\uvect{n}) = \frac{1}{\volK} \oint_{B_1(0)} \tilde{R}^{(-3)}(\varepsilon,\uvect{n},\uvect{r})\, \dd\uvect{r} = \frac{2\Omega_\varepsilon}{\volK} + O(\varepsilon^4).
\end{equation}
On the other hand, via Theorem \ref{ThmNonpar},
\begin{equation}
    \lim_{p \rightarrow -3^+} (3+p)R^{(p)}_{KK}(\varepsilon,\uvect{n}) = 2 \, \bigg{(}\sum_{\substack{ik \\ V_i \notin \mathcal{A}(F_k)}} R^{(-3)}_{V_i F_k}(\varepsilon,\uvect{n}) w_{V_i F_k} + \sum_{\substack{j<j' \\ \mathcal{A}(E_j)\cap \mathcal{A}(E_{j'}) = \emptyset}} R^{(-3)}_{E_j E_{j'}}(\varepsilon,\uvect{n}) w_{E_j E_{j'}}\bigg{)}.
\end{equation}
We are able to express $R^{(-3)}_{V_i F_k}(\varepsilon,\uvect{n})$ and $R^{(-3)}_{E_j E_{j'}}(\varepsilon,\uvect{n})$ in the following way:
\begin{equation}
R^{(-3)}_{V_i F_k}(\varepsilon,\uvect{n}) = \frac{\Omega_\varepsilon 1_{\uvect{n} \in V_iF_k}}{\vol{F_k}h_{ik}} + O(\varepsilon^4), \qquad
R^{(-3)}_{E_j E_{j'}}(\varepsilon,\uvect{n}) = \frac{\Omega_\varepsilon 1_{\uvect{n}\in E_j E_{j'}}}{\vol{E_j}\vol{E_{j'}}h_{jj'}\sin\theta_{jj'}} + O(\varepsilon^4).
\end{equation}
We will prove only the first equality as the other one is get simply by shifting (edge-edge configuration is equivalent to vertex-face configuration by means of Equation \eqref{ShiftEq}). Let $V_i \notin \mathcal{A}(F_k)$ for some (polygonal) face $F_k$ and vertex $V_i$. We denote by $r$ the distance between $V_i$ and the point of intersection of $\mathcal{A}(F_k)$ and the line passing through the vertex $V_i$ in the direction of $\uvect{n}$. Note that the perpendicular distance $h_{ik}$ between $V_i$ and $\mathcal{A}(F_k)$ is independent on the direction of $\uvect{n}$. Since $\varepsilon$ is small, we can write
\begin{equation}
R^{(p)}_{V_i F_k}(\varepsilon,\uvect{n}) = \frac{1}{\vol{F_k}} \int_{F_k} R^{(p)}(\varepsilon,\uvect{n},x,V_i) \ddd x = \frac{r^p \vol{F_k \cap C(\varepsilon,V_i,\uvect{n})}}{\vol{F_k}} + O(\varepsilon^4)
\end{equation}
Assuming $\uvect{n} \in V_iF_k$, the point of intersection lies in the interior of $F_k$. Hence, for sufficiently small $\varepsilon$, we get that $V_i \cap C(\varepsilon,V_i,\uvect{n})$ is an ellipse with area
\begin{equation}
\vol{V_i \cap C(\varepsilon,V_i,\uvect{n})} = 1_{\uvect{n} \in V_iF_k}\frac{\Omega_\varepsilon r^3}{h_{ik}} + O(\varepsilon^4)
\end{equation}
Hence
\begin{equation}
R^{(p)}_{V_i F_k}(\varepsilon,\uvect{n}) = \frac{1}{\vol{F_k}} \int_{F_k} R^{(p)}(\varepsilon,\uvect{n},x,V_i) \ddd x = \frac{r^{3+p} \Omega_\varepsilon 1_{\uvect{n} \in V_iF_k}}{\vol{F_k} h_{ik}} + O(\varepsilon^4)
\end{equation}
when $p = -3$, the dependency on $r$ vanishes. Finally, comparing this relation with \eqref{EqLimR}, we get the equation for weights
\begin{equation}\label{EqWeights}
    \frac{1}{\volK} = \sum_{\substack{ik \\ V_i \notin \mathcal{A}(F_k)}} \frac{w_{V_i F_k} 1_{\uvect{n} \in V_iF_k}}{\vol{F_k}h_{ik}} + \sum_{\substack{j<j' \\ \mathcal{A}(E_j)\cap \mathcal{A}(E_{j'}) = \emptyset}} \frac{w_{E_j E_{j'}} 1_{\uvect{n}\in E_j E_{j'}}}{\vol{E_j}\vol{E_{j'}}h_{jj'}\sin\theta_{jj'}}
\end{equation}
valid for any $\uvect{n}$ for which all the values $1_{\uvect{n} \in AB}$ are well defined. Lastly, defining auxiliary weight $n_{AB}$ via
\begin{equation}
w_{V_iF_k} = \frac{\vol{F_k} h_{ik} n_{V_i F_k}}{\volK}, \qquad w_{E_jE_{j'}} = \frac{\vol{E_j}\vol{E_{j'}} h_{jj'} n_{E_j E_{j'}} \sin \theta_{jj'}}{\volK},
\end{equation}
we get
\begin{equation}
    1 = \sum_{\substack{ik \\ V_i \notin \mathcal{A}(F_k)}} n_{V_i F_k}1_{\uvect{n} \in V_iF_k} + \sum_{\substack{j<j' \\ \mathcal{A}(E_j)\cap \mathcal{A}(E_{j'}) = \emptyset}} n_{E_j E_{j'}} 1_{\uvect{n}\in E_j E_{j'}}.
\end{equation}
This constrain alone enables us to determine admissible weights for any convex nonparallel polyhedron via set of linear equations got by varying the direction of $\uvect{n}$.
\end{proof}

\subsection{Tetrahedron}
As an example, we express the random distance moments in the case of a tetrahedron. There are two possible ways how a planar projection of a tetrahedron could look like (almost surely) with respect to the number of intersecting pairs of edges and vertices/faces in the projection (see Figure \ref{fig:TetraOri}).

\begin{figure}[h]
    \centering
 \includegraphics[width=0.3\textwidth]{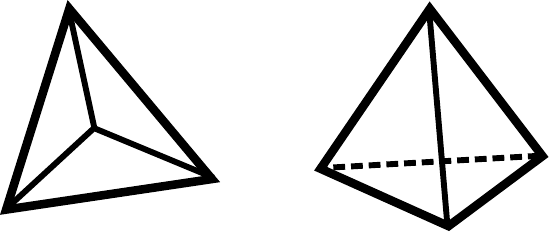}
    \caption{Tetrahedron projection orientations}
    \label{fig:TetraOri}
\end{figure}

In the first case, one vertex covers one face. There are no other vertex/face nor edge/edge coverings. Similarly, in the second case, one edge is covered by another edge. There are again no other coverings. Thus, in order to satisfy Equation \eqref{EqWeightsN}, we can simply choose $n_{V_iF_k} = n_{E_jE_{j'}} = 1$ for each vertex $V_i$, face $F_k$ and edges $E_j, E_{j'}$. Hence, by Theorem \ref{th:nonpcon},
\begin{equation}
\begin{split}
    P_{KK} & = \frac{12/\volK}{(6+p)(5+p)(4+p)(3+p)} \bigg{(}\sum_{\substack{ik \\ V_i \notin \mathcal{A}(F_k)}} P_{V_i F_k} \vol{F_k}h_{ik} \,\,\\
    &+ \!\!\!\! \sum_{\substack{j<j' \\ \mathcal{A}(E_j)\cap \mathcal{A}(E_{j'}) = \emptyset}} \! P_{E_j E_{j'}} \vol{E_j}\vol{E_{j'}}h_{jj'}\sin\theta_{jj'}\bigg{)}.
\end{split}
\end{equation}


\section{Mean distance in regular polyhedra}
To apply our general method, we shall derive the mean distance in all five regular polyhedra (also known as Platonic solids). Among those solids, only the tetrahedron is nonparallel convex, so Theorem \ref{th:nonpcon} applies here. Hence, we used this theorem to find the mean distance in a general (possibly irregular) tetrahedron. In the rest of our paper, we calculate the mean distance in all other Platonic solids (including the regular tetrahedron again). Since they are an example of parallel polyhedra, we cannot use Theorem \ref{th:nonpcon} due to presence of irreducible configurations of type face-face and edge-face. However, we can still calculate the mean distance. The calculation relies the Overlap formula as well as the symmetries of those regular polyhedra which drastically reduce the number of configurations needed to be considered. Throughout this section, we denote $\nu$ the area of (any) face of $K$ and $l$ the length (any) of its edge. These values makes sense because $K$ is a regular polyhedron. Furthermore, $\phi = \frac{1+\sqrt{5}}{2}$ is the Golden ratio.

\subsection{Regular tetrahedron}
Let us have $P$ even homogeneous of order $p$ dependent on two random points picked from $K$ a regular tetrahedron given by vertices $V_1[1,0,0]$, $V_2[0,1,0]$, $V_3[0,0,1]$, $V_4[1,1,1]$, edges connecting them $E_{12}$, $E_{13}$, $E_{14}$, $E_{23}$, $E_{24}$, $E_{34}$ ($E_{ij} = \overline{V_iV_j},$ where $i\neq j$) and with opposite faces $F_1$, $F_2$, $F_3$, $F_4$. Note that $\volK = 1/3$, so if we want to express the mean of $P$ in a tetrahedron of unit volume, we must multiply all our results by $3^{p/3}$. We put $P = L^p$. For the definition of various mean values $P_{ab} = L_{ab}^{(p)}$, see Figure \ref{fig:LinePickingTetrahedron}. We also included the position of the scaling point $C$ in cases reduction is possible. The arrows indicate which configurations reduce to which.
\begin{figure}[h]
    \centering
     \includegraphics[width=0.85\textwidth]{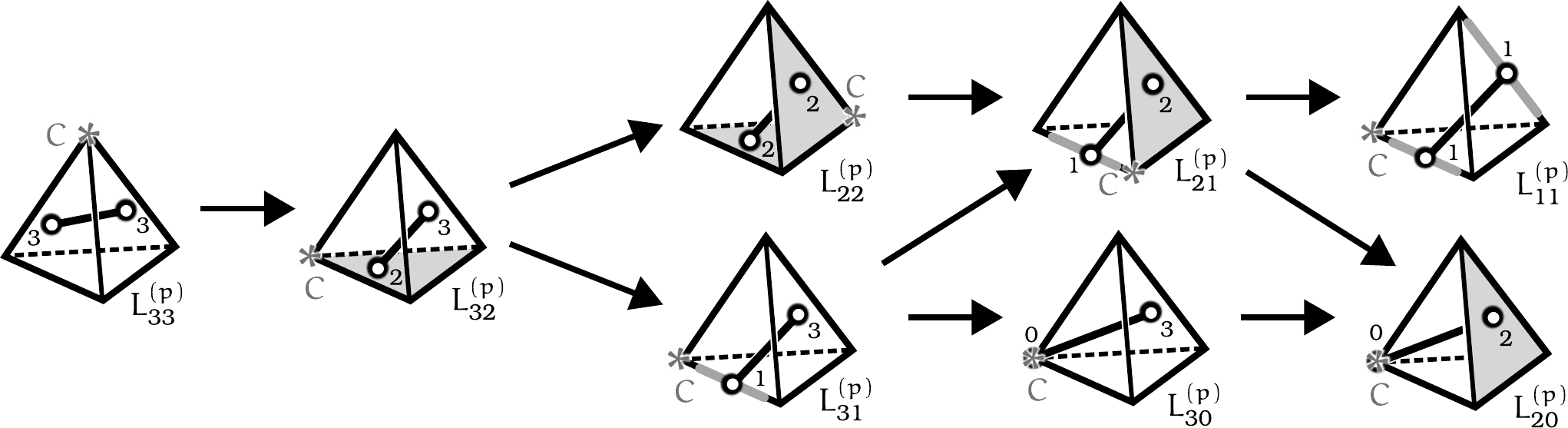}
    \caption{All different $L_{ab}^{(p)}$ configurations encountered for $K$ being a regular tetrahedron}
    \label{fig:LinePickingTetrahedron}
\end{figure}

\noindent
Based on CRT, let us write our reduction system of equations:
\begin{align*}
    p P_{33} & = 6(P_{32}-P_{33})\\
    p P_{32} & = 3(P_{22}-P_{32})+2(P_{31}-P_{32})\\
    p P_{22} & = 4(P_{21}-P_{22})\\
    p P_{31} & = 3(P_{21}-P_{31})+1(P_{30}-P_{31})\\
    p P_{21} & = 2(P_{11}-P_{21})+1(P_{20}-P_{21})\\
    p P_{30} & = 3(P_{20}-P_{30}),
\end{align*}
where $P_{33} = P_{KK}$ and by symmetry, we can put $P_{32} = P_{KF_1}, P_{31} = P_{KE_{12}}, P_{30} = P_{KV_1}, P_{22} = P_{F_1F_2}, P_{21} = P_{F_4V_{14}}, P_{20} = P_{F_4V_4}, P_{11} = P_{E_{12}E_{34}}$. This linear system has a solution
\begin{equation}\label{P33Tetr}
    P_{33} = \frac{72 (3 P_{11}+2 P_{20})}{(6+p)(5+p)(4+p)(3+p)}.
\end{equation}
To demonstrate our technique for irreducible configurations, we derive the value of $L_{33}$. That means, we choose $P = L^p$ with $p=1$.
\phantomsection
\subsubsection{\texorpdfstring{L\textsubscript{20}}{L20}}
By \eqref{LABpolypo}, by symmetry and using $\vol{F_4} = \sqrt{3}/2, h_1= 1/\sqrt{6} , h=2/\sqrt{3}$,
\begin{equation}
P_{20} = L^{(p)}_{F_4V_4} =  \frac{6h^{2+p}}{\vol{F_4}} I^{(p)}_{00}\left(\frac{\sqrt{2}}{4},\frac{\pi}{3}\right).
\end{equation}
Using the recursion relations,
\begin{equation}\label{J1sq2o4}
I^{(1)}_{00}\left(\frac{\sqrt{2}}{4},\frac{\pi}{3}\right) = \frac{1}{16 \sqrt{2}}-\frac{\pi}{9}+\frac13\arcsin\sqrt{\frac{2}{3}}+\frac{25}{96 \sqrt{2}} \arcsinh\frac{1}{\sqrt{3}},
\end{equation}
so, further using $\arcsin \sqrt{2/3} = \arctan\sqrt{2}$ and $\arcsinh (1/\sqrt{3}) = \tfrac12\ln 3$,
\begin{equation}
L_{20} = \frac{\sqrt{2}}{3}-\frac{32 \pi }{27}+\frac{32}{9} \arctan\sqrt{2}+\frac{25 \ln 3}{18 \sqrt{2}}.  
\end{equation}

\phantomsection
\subsubsection{\texorpdfstring{L\textsubscript{11}}{L11}}
By shifting \eqref{ShiftEq}, we get $L_{11} = L_{AB}$, where $B$ is the origin and $A$ is a parallelogram with vertices $[1, 0, -1], [0, 1, -1]$, $[-1, 0, -1]$, $[0, -1, -1]$. Therefore, by the point-polygon formula \eqref{LABpolypo} with $h=1$ and $\volA = 2$,
\begin{equation}
L_{11} = \frac{8 h^3}{\volA} I^{(1)}_{00}\left(\frac{\sqrt{2}}{2},\frac{\pi}{4}\right),
\end{equation}
where by recurrences,
\begin{equation}
I^{(1)}_{00}\left(\frac{\sqrt{2}}{2},\frac{\pi}{4}\right) = \frac{1}{6 \sqrt{2}}-\frac{\pi}{12}+\frac{1}{3}\arcsin\frac{1}{\sqrt{3}}+\frac{7 }{12 \sqrt{2}}\arcsinh\frac{1}{\sqrt{3}}.
\end{equation}
Hence, writing $
\arcsin\left(1/\sqrt{3}\right) =\frac{\pi }{2}-\arctan\sqrt{2}$ and $\arcsinh (1/\sqrt{3}) = \tfrac12\ln 3$,
\begin{equation}
    L_{11} = \frac{\sqrt{2}}{3}+\frac{\pi }{3}-\frac{4}{3} \arctan\sqrt{2}+\frac{7 \ln 3}{6 \sqrt{2}}.
\end{equation}

\phantomsection
\subsubsection{\texorpdfstring{L\textsubscript{33}}{L33}}
Substituting $L_{20}$ and $L_{11}$ into \eqref{P33Tetr} with $P = L^p$ and $p=1$, we get, finally
\begin{equation*}
    L_{33} = \frac{3}{35} (3 L_{11}+2L_{20}) = \frac{\sqrt{2}}{7}-\frac{37 \pi }{315}+\frac{4}{15} \arctan \sqrt{2}+\frac{113 \ln 3}{210 \sqrt{2}}.
\end{equation*}
Or, re-scaling to the unit volume tetrahedron, 
\begin{equation}
    L_{33}\big{|}_{\volK = 1} =\sqrt[3]{3} \left(\frac{\sqrt{2}}{7}-\frac{37 \pi }{315}+\frac{4}{15} \arctan \sqrt{2}+\frac{113 \ln 3}{210 \sqrt{2}}\right) \approx 0.72946242,
\end{equation}
which is an \textbf{exact} expression of an approximation given by Weisstein \cite{weissteinTeLi}. Similarly, we would proceed in the case of the second moment:
\begin{equation}
    L_{33}^{(2)}\big{|}_{\volK = 1}=\frac{9}{10\sqrt[3]{3}}.
\end{equation}
Alternatively, we can express the result as the normalised mean distance $\Gamma_{KK}$. Since $V_1(K) = 3\sqrt{2} \arccos\left(-\frac13\right)/\pi$ (see Table \ref{tab:IntrinPlaton} with $a=\sqrt{2}$), we have
\begin{equation}
    \Gamma_{KK} = \frac{L_{33}}{V_1(K)}=\frac{\pi}{3\sqrt{2} \arccos\left(-\frac13\right)} \left(\frac{\sqrt{2}}{7}-\frac{37 \pi }{315}+\frac{4}{15} \arctan \sqrt{2}+\frac{113 \ln 3}{210 \sqrt{2}}\right) \approx 0.19601928.
\end{equation}
Of course, using the reduction technique, we could get other moments (replacing $I_{ij}^{(1)}$ by $I_{ij}^{(p)}$ integrals), and even for a general edge-length tetrahedron.

\subsection{Cube}
We present a re-derivation of the Robbins constant for $K$ being a cube via our method. Here, we demonstrate the Crofton Reduction Technique including the overlap formula. A standard way how to choose its vertices is $[0,0,0]$, $[1,0,0]$, $[0,1,0]$, $[0,0,1]$, $[0,1,1]$, $[1,0,1]$, $[1,1,0]$, $[1,1,1]$. Under this choice, the edge length $l=1$, face area $\nu =1$ and the volume $\volK = 1$. We put $P = L^p$. For the definition of various mean values $P_{ab} = L_{ab}^{(p)}$, see Figure \ref{fig:LinePickingCube}. Note that in $L_{21r}$ configuration, we let $B$ to be four edges (boundary of an opposite face) rather than just one edge.

\begin{figure}[h]
    \centering
     \includegraphics[width=0.90\textwidth]{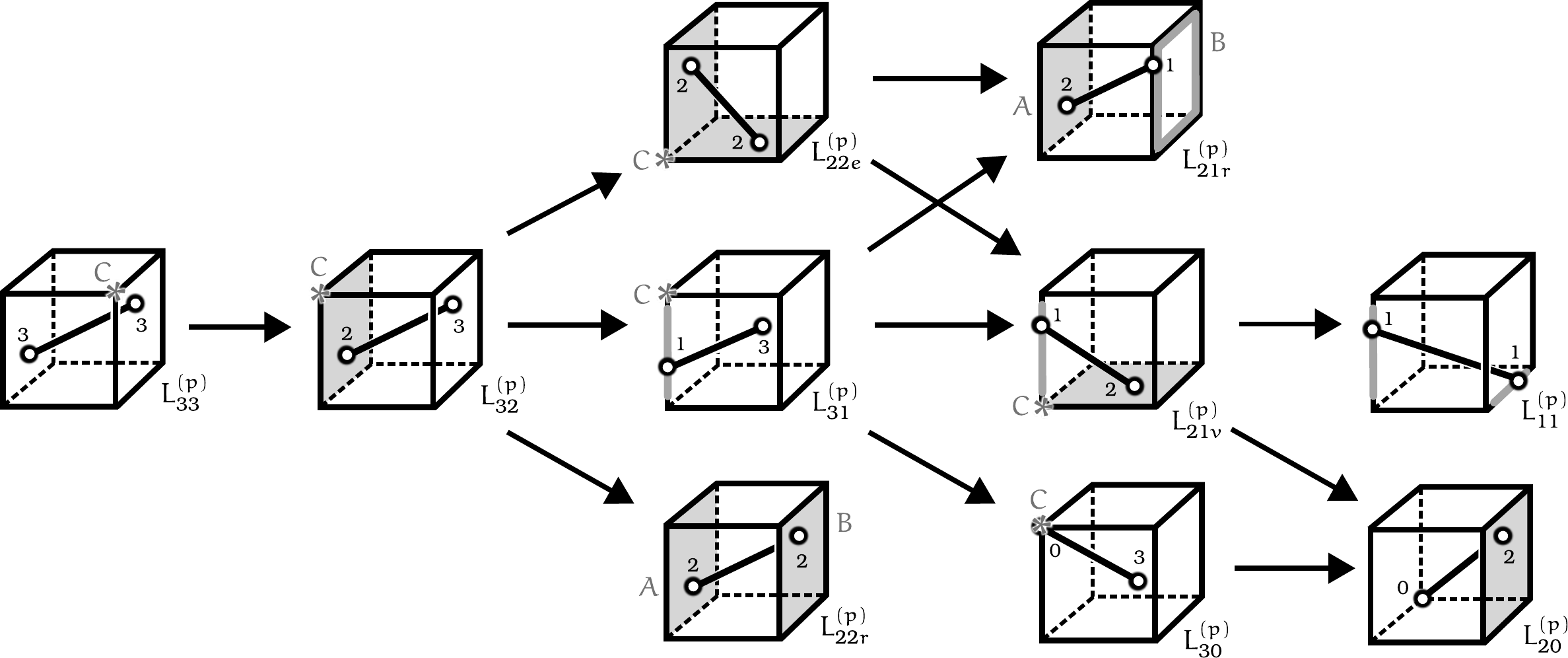}
    \caption{All different $L_{ab}^{(p)}$ configurations encountered for $K$ being a cube}
    \label{fig:LinePickingCube}
\end{figure}

\noindent
Performing the reduction, we get the set of equations, where
\begin{align*}
p P_{33}& =6 (P_{32}-P_{33}),\\
p P_{32} & = 3(P_{22} - P_{32})+2(P_{31}-P_{32}),\\
p P_{22e} & = 4 (P_{21}'-P_{22e}),\\
p P_{31} & = 3 (P_{21}-P_{31}) + 1(P_{30}-P_{31}),\\
p P_{21v} & = 2 (P_{11}-P_{21v}) + 1(P_{20}-P_{21v}),\\
p P_{30} & = 3 (P_{20}-P_{30})
\end{align*}
with
\begin{align*}
    P_{22} & = \tfrac23 P_{22e}+\tfrac13 P_{22r},\\
    P_{21} & = \tfrac13 P_{21v}+\tfrac23 P_{21r},\\
    P_{21}' & = \tfrac12 P_{21v}+\tfrac12 P_{21r}.
\end{align*}
Solving the system, we get
\begin{equation}
P_{33} = \frac{72 (P_{11}+P_{20})}{(3+p) (4+p) (5+p) (6+p)}+\frac{48 P_{21r}}{(4+p) (5+p) (6+p)}+\frac{6 P_{22r}}{(5+p) (6+p)}.
\end{equation}
When $p=1$, we get for the mean distance
\begin{equation}\label{Eq:Cube}
L_{33} = \frac{1}{35} (3 L_{11}+3 L_{20}+8 L_{21r}+5 L_{22r}).
\end{equation}

\phantomsection
\subsubsection{\texorpdfstring{L\textsubscript{20}}{L20}}
Without loss of generality, we can write $L_{20} = L_{AB}$, where $A$ is the cube's upper face defined as a square with vertices $[0,0,1]$, $[1,0,1]$, $[1,1,1]$, $[0,1,1]$ and $B$ is the origin $[0,0,0]$. Domains $A$ and $B$ are separated by distance $h=1$. The face $A$ is having area $\volA = 1$. By \eqref{LABpolypo} and by symmetry,
\begin{equation}
L_{20} = \frac{2}{\volA} I^{(1)}_{00}\left(1,\frac{\pi}{4}\right).
\end{equation}
Using recurrence relations (see Table \ref{tab:auxil} in Appendix),
\begin{equation}
L_{20} = \frac{1}{\sqrt{3}}-\frac{\pi }{18}+\frac{4}{3} \arccoth\sqrt{3}.
\end{equation}

\phantomsection
\subsubsection{\texorpdfstring{L\textsubscript{11}}{L11}}
The value $L_{11}$ can be defined as a mean distance between egde $E_1 = \overline{[0,0,0][0,1,0]}$ and edge $E_2 = \overline{[0,1,1][1,1,1]}$. Shifting $E_1$ by vector $-E_2$ (See shifting relation \eqref{ShiftEq}), we can rewrite this as $L_{11} = L_{AB}$, where again $A$ is the upper face of the cube and $B$ is the origin. Hence $L_{11} = L_{20}$.

\phantomsection
\subsubsection{\texorpdfstring{L\textsubscript{21r}}{L21r}}
Since the reduction technique cannot be applied on $AB$ being parallel, we use the overlap formula with $A$ being one face of the cube. In case of $L_{21r}$, the other domain $B$ is an opposite edge. By symmetry, we can add to this edge also three other edges opposite to $A$ (see Figure \ref{fig:LinePickingCube}). Hence, $B$ is a boundary of the face opposite to $A$ with length $\volB = 4$. Let $k=(x,y)$ then $\operatorname{proj}A = \operatorname{proj}B$ is a square with vertices $[\frac12,\frac12]$, $[-\frac12,\frac12]$, $[-\frac12,-\frac12]$, $[\frac12,-\frac12]$. In order to $\operatorname{proj} A$ and $\operatorname{proj} B+k$ have nonzero intersection, $k$ must be confined in the region $-1<x<1 \land -1<y<1$. By symmetry, we can chose $k$ to lie in the fundamental triangle domain $D(1,\pi/4)$ (we then multiply the values by $8$).

\begin{figure}[h]
    \centering     \includegraphics[height=0.22\textwidth]{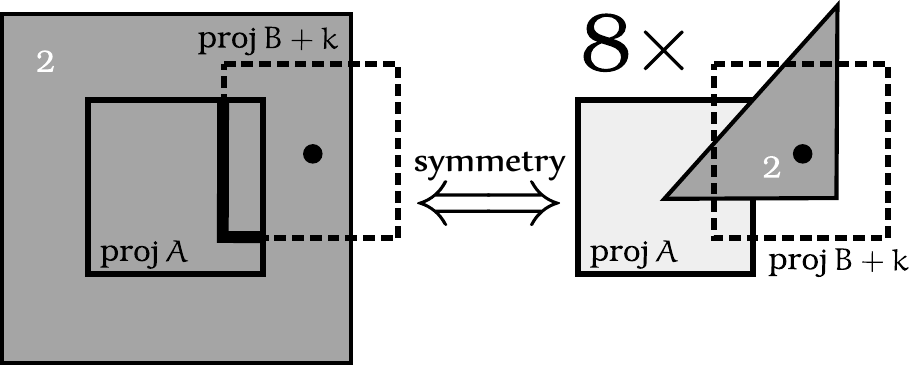}
    \caption{Overlap of the opposite face and edges of a cube}
    \label{fig:C_over_wire}
\end{figure}

\noindent
Setting up the integral,
\begin{equation}
L_{21r} = \frac{8}{\volA \volB} \int_D \sqrt{h^2 + k^2}\vol{A \cap \operatorname{proj}B + k} \ddd k,
\end{equation}
where $h = 1, \volA = 1 ,\volB = 4$ and $D = D(1,\pi/4)$ is a domain in Figure \ref{fig:C_over_wire} on the right (labeled with the number $2$). In this domain, we can write for the length of the polyline of intersection
\begin{equation}
\vol{A \cap \operatorname{proj} B + k} = 2-x-y,
\end{equation}
which gives us in terms of our auxiliary integrals
\begin{equation}
L_{21r} = \frac{8}{\volA \volB}\Bigg{[}
2I^{(1)}_{00}\left(1,\frac{\pi}{3}\right)
- I^{(1)}_{10}\left(1,\frac{\pi}{4}\right)
-I^{(1)}_{01}\left(1,\frac{\pi}{4}\right)\Bigg{]}
\end{equation}
Via recursions (see Table \ref{tab:auxil} in Appendix), we get
\begin{equation}
    L_{21r} = \frac{7}{6 \sqrt{2}}-\frac{1}{\sqrt{3}}-\frac{\pi }{9}+\frac{1}{4} \arccoth\sqrt{2}+\frac{5}{3} \arccoth\sqrt{3}.
\end{equation}

\phantomsection
\subsubsection{\texorpdfstring{L\textsubscript{22r}}{L22r}}
Again, we use the overlap formula for $AB$ being opposite faces. By symmetry, we again integrate $\vol{A \cap \operatorname{proj} B + k}$ over one 
eighth of all positions of $k$ (see Figure \ref{fig:C_over_full}).

\begin{figure}[h]
    \centering
     \includegraphics[height=0.22\textwidth]{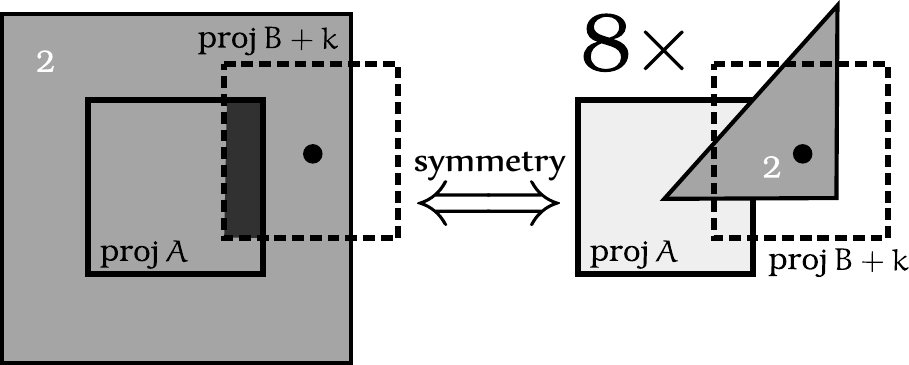}
    \caption{Overlap of the opposite faces of a cube}
    \label{fig:C_over_full}
\end{figure}

\noindent
Setting up the integral,
\begin{equation}
L_{22r} = \frac{8}{\volA \volB} \int_D \sqrt{h^2 + k^2}\vol{A \cap \operatorname{proj}B + k} \ddd k,
\end{equation}
where $h = 1, \volA = \volB = 1$ and $D = D()$ is a fundamental triangle domain (labeled $2$ in Figure \ref{fig:O_over_full} on the right). In this domain, we have for the polygon of intersection
\begin{equation}
\vol{A \cap \operatorname{proj} B + k} = (1-x)(1-y),
\end{equation}
and therefore
\begin{equation}
L_{22r} = \frac{8}{\volA \volB}\Bigg{[}
I^{(1)}_{00}\left(1,\frac{\pi}{4}\right) - I^{(1)}_{10}\left(1,\frac{\pi}{4}\right) - I^{(1)}_{01}\left(1,\frac{\pi}{4}\right) + I^{(1)}_{11}\left(1,\frac{\pi}{4}\right)
\Bigg{]}.
\end{equation}
Going through all recursions, we get, after simplifications
\begin{equation}
L_{22r} = \frac{4}{15}+\frac{\sqrt{2}}{5}-\frac{4}{5 \sqrt{3}}-\frac{2 \pi }{9}+\arccoth\left(\sqrt{2}\right)+\frac{4}{3} \arccoth\sqrt{3}.
\end{equation}

\phantomsection
\subsubsection{\texorpdfstring{L\textsubscript{33}}{L33}}
Putting everything together by using \eqref{Eq:Octa}, we finally arrive at the Robin's constant
\begin{equation}
    L_{33} = \frac{4}{105}+\frac{17 \sqrt{2}}{105}-\frac{2 \sqrt{3}}{35}-\frac{\pi }{15}+\frac{1}{5} \arccoth\sqrt{2}+\frac{4}{5} \arccoth\sqrt{3}  \approx 0.66170718.
\end{equation}

\subsection{Regular octahedron}
A standard way how to select vertices of an regular octahedron the vertices is $\left[\pm 1,0,0 \right]$, $\left[\pm 1,0,0 \right]$, $\left[\pm 1,0,0 \right]$. Under this choice, the edge length is $l=\sqrt{2}$, the area of each face is $\nu = \sqrt{3}/2$ and the volume of $K$ is $\volK = 4/3$. Again, we put $P = L^p$. For the definition of various mean values $P_{ab} = L_{ab}^{(p)}$, see Figure \ref{fig:LinePickingOctahedron}. We also included the position of the scaling point $C$ in cases when the reduction is possible.

\begin{figure}[h]
    \centering
     \includegraphics[width=0.95\textwidth]{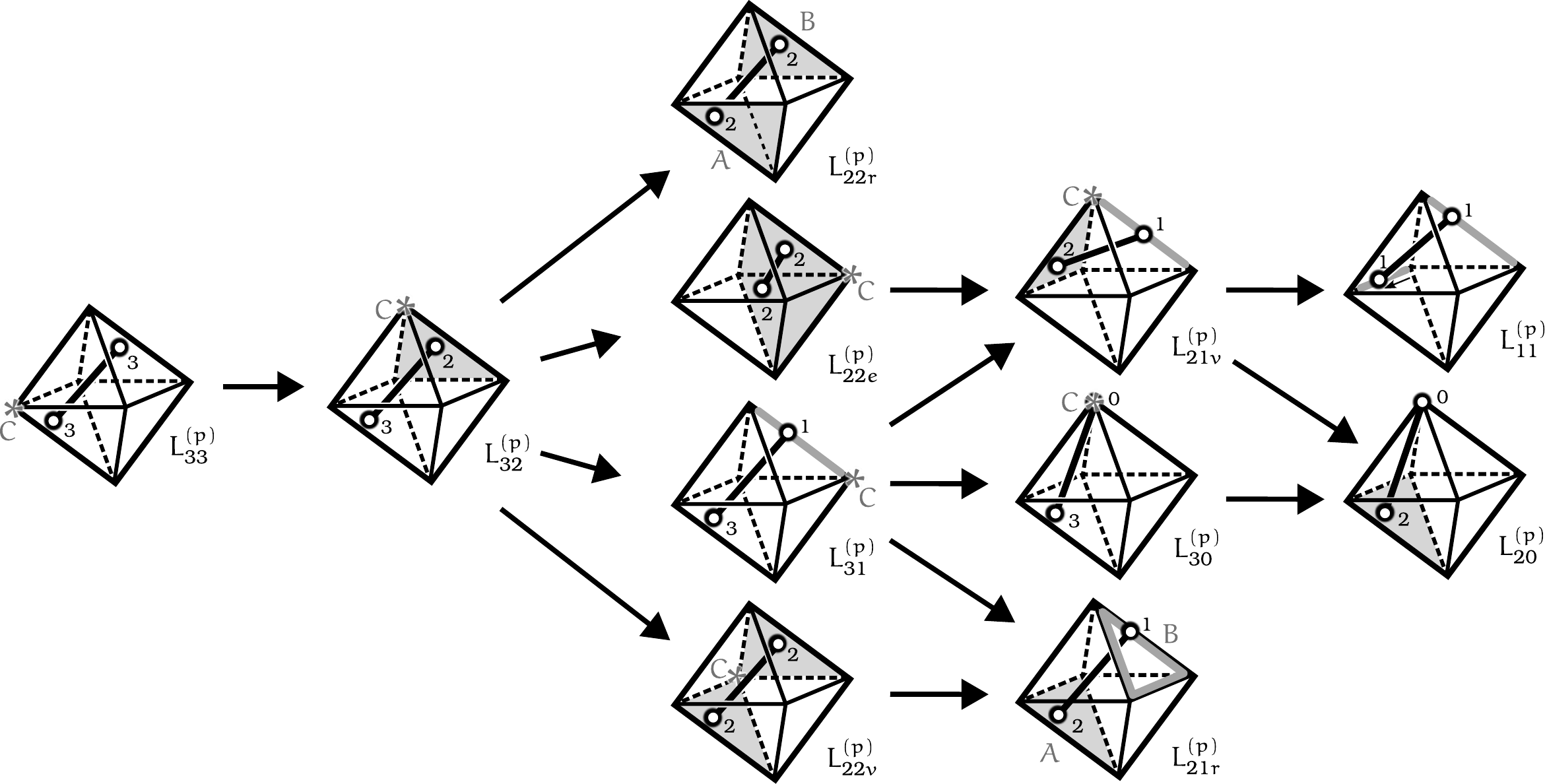}
    \caption{All different $L_{ab}^{(p)}$ configurations encountered for $K$ being a regular octahedron}
    \label{fig:LinePickingOctahedron}
\end{figure}

\noindent
Performing the reduction, we get the set of equations, where
\begin{align*}
p P_{33}& =6 (P_{32}-P_{33}),\\
p P_{32} & = 3(P_{22} - P_{32})+2(P_{31}-P_{32}),\\
p P_{22e} & = 4 (P_{21v}-P_{22e}),\\
p P_{22v} & = 4 (P_{21r}-P_{22v}),\\
p P_{31} & = 3 (P_{21}-P_{31}) + 1(P_{30}-P_{31}),\\
p P_{21v} & = 2 (P_{11}-P_{21v}) + 1(P_{20}-P_{21v}),\\
p P_{30} & = 3 (P_{20}-P_{30})
\end{align*}
with
\begin{align*}
    P_{22} & = \tfrac14 P_{22e}+\tfrac{1}{4}P_{22r}+\tfrac{1}{2} P_{22v},\\
    P_{21} & = \tfrac12 P_{21v}+\tfrac12 P_{21r}.
\end{align*}
Solving the system, we get
\begin{equation}
P_{33} = \frac{72(P_{20}+P_{11})}{(3+p)(4+p)(5+p)(6+p)} + \frac{54 P_{21r}}{(4+p)(5+p)(6+p)} + \frac{9 P_{22r}}{2(5+p)(6+p)}.
\end{equation}
When $p=1$, we get for the mean length
\begin{equation}\label{Eq:Octa}
L_{33} = \frac{3}{140} (4L_{20}+4 L_{11}+12L_{21r}+5L_{22r}).
\end{equation}

\phantomsection
\subsubsection{\texorpdfstring{L\textsubscript{20}}{L20}}
More precisely, we can write $L_{20} = L_{AB}$, where face $A$ has vertices $[-1,0,0]$, $[0,-1,0]$, $[0,0,-1]$ and $B = [0,0,1]$ (see Figure \ref{fig:LinePickingOctahedron}). Vertex $B$ is separated from $\mathcal{A}(A)$ by distance $h=2/\sqrt{3}$. By \eqref{LABpolypo} and by symmetry,
\begin{equation}
L_{20} = \frac{2h^3}{\volA} \left(I^{(1)}_{00}\left(\frac{\sqrt{2}}{2},\frac{\pi}{3}\right) -I^{(1)}_{00}\left(\frac{\sqrt{2}}{4},\frac{\pi}{3}\right)\right),
\end{equation}
where $\volA = \nu = \sqrt{3}/2$ is the area of $A$. Using recurrence relations on $I^{{(1)}}_{00}(\cdot,\cdot)$,
\begin{equation}\label{J1sq2o2p3}
I^{(1)}_{00}\left(\frac{\sqrt{2}}{2},\frac{\pi}{3}\right) = \frac{1}{4}-\frac{\pi}{36}+\frac{7 \arcsinh 1}{12 \sqrt{2}} = \frac{1}{4}-\frac{\pi}{36}+\frac{7 \arccoth\sqrt{2}}{12 \sqrt{2}}.
\end{equation}
The other $I^{(1)}_{00}$ integral is already given by \eqref{J1sq2o4} (we just write $\arcsin\sqrt{2/3} = \pi/2 - \arccot \sqrt{2}$), hence
\begin{equation}
L_{20} = \frac{8}{9}-\frac{\sqrt{2}}{9}-\frac{8 \pi
   }{27}-\frac{25 \ln 3}{54
   \sqrt{2}}+\frac{32}{27} \arccot\sqrt{2}+\frac{28}{27}
   \sqrt{2} \arccoth\sqrt{2}
\end{equation}

\phantomsection
\subsubsection{\texorpdfstring{L\textsubscript{11}}{L11}}
By shifting \eqref{ShiftEq}, we get $L_{11} = L_{AB}$, where $B$ is the origin and $A$ is a parallelogram with vertices $[1, 0, 1]$, $[0, 1, 1]$, $[0, 2, 0]$, $[1, 1, 0]$. Therefore, by the point-polygon formula \eqref{LABpolypo} with $h = 2/\sqrt{3}$ and $\volA = \sqrt{3}$,
\begin{equation}
L_{11} = \frac{h^3}{\volA} \left(4I^{(1)}_{00}\left(\frac{\sqrt{2}}{4},\frac{\pi}{3}\right) +2I^{(1)}_{00}\left(\frac{\sqrt{2}}{2},\frac{\pi}{3}\right)\right),\end{equation}
Hence, since the $I$'s are already given by \eqref{J1sq2o4} and \eqref{J1sq2o2p3}, we get, simplifying,
\begin{equation}
L_{11} = \frac{4}{9}+\frac{\sqrt{2}}{9}+\frac{4 \pi
   }{27}+\frac{25 \ln 3}{54
   \sqrt{2}}-\frac{32}{27} \arccot\sqrt{2}+\frac{14}{27}
   \sqrt{2} \arccoth\sqrt{2}
\end{equation}

\phantomsection
\subsubsection{\texorpdfstring{L\textsubscript{21r}}{L21r}}
Since the reduction technique cannot be applied on $AB$ being parallel, we use the overlap formula with $A$ being one face of the octahedron. By symmetry, we can choose $B$ as all three opposite edges to $A$ instead of just one, the mean value stays the same (see Figure \ref{fig:LinePickingOctahedron}). This choice makes the overlap formula simpler. To compute $\vol{A \cap \operatorname{proj} B + k}$, we slide the projection $B$ across $A$. To get $L_{21r}$, we then integrate over the length of their intersection with respect to all vectors $k$. By symmetry, we can integrate over just one sixth of all sliding domains (see Figure \ref{fig:O_over_wire} for our overlap diagram, in which white numbers represent the number of line segments in the $AB$ projection intersection with respect to position of the shift vector $k$ -- black dot).
\begin{figure}[h]
    \centering     \includegraphics[height=0.24\textwidth]{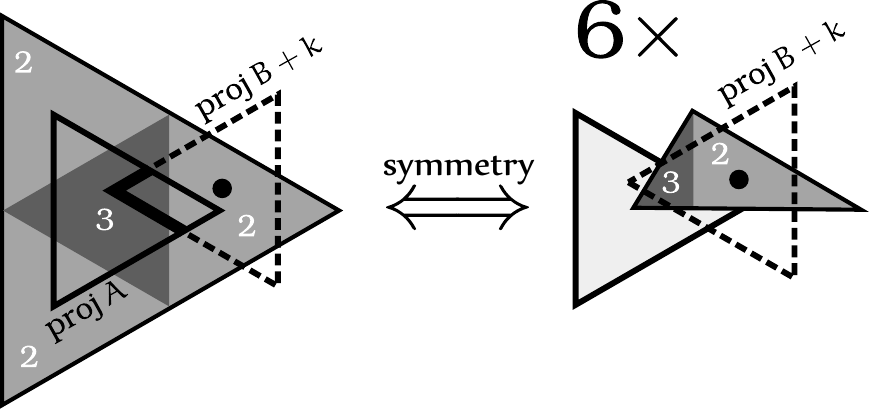}
    \caption{Overlap of the opposite face and edges of an octahedron}
    \label{fig:O_over_wire}
\end{figure}

\noindent
Hence, setting up the integral,
\begin{equation}
L_{21r} = \frac{6}{\volA \volB} \int_D \sqrt{h^2 + k^2}\vol{A \cap \operatorname{proj}B + k} \ddd k,
\end{equation}
where $h = 2/\sqrt{3}, \volA = \sqrt{3}/2 ,\volB = 3\sqrt{2}$ and $D$ is a domain in Figure \ref{fig:O_over_wire} on the right consisted of two subdomains $D_j$ where $j \in \{2,3\}$ denotes the number of line segments of the intersection $A \cap (\operatorname{proj} B+k)$, which is a polyline. We have $D = D_2 \sqcup D_3$. Let $k = (x,y)$ with the origin coinciding with the centroid of $\operatorname{proj}A$ triangle with vertices $[0,\frac{\sqrt{6}}{3}]$, $[-\frac{\sqrt{6}}{2},\frac{\sqrt{2}}{2}]$, $[-\frac{\sqrt{6}}{2},-\frac{\sqrt{2}}{2}]$ (see Figure \ref{fig:O_over_wire}). Let us denote $v_j = \vol{A \cap \operatorname{proj}B + k}$ for all $k \in D_j$, then we have for the subdomains:
\begin{itemize}
    \item $D_3$ is a triangle with vertices $[0,0]$, $[\frac{\sqrt{6}}{6},0]$, $[\frac{\sqrt{6}}{6},\frac{\sqrt{2}}{2}]$ in which $v_3 = \sqrt{2}$
    \item $D_2$ is a triangle with vertices $[\frac{\sqrt{6}}{6},0]$, $[\frac{2\sqrt{6}}{3},0]$, $[\frac{\sqrt{6}}{6},\frac{\sqrt{2}}{2}]$ in which $v_2 = \frac{4 \sqrt{2}}{3}-\frac{2 x}{\sqrt{3}}$
\end{itemize}
Note that in general, $\vol{A \cap \operatorname{proj}B + k}$ is \textbf{linear} in $(x,y)$ in the subdomains. By inclusion/exclusion, we can write our integral as
\begin{equation}
\begin{split}
& L_{21r}  = \frac{6}{\volA \volB}\left[\iint_{D_3\cup D_2} v_2\sqrt{h^2+x^2+y^2} \ddd x \dd y + \iint_{D_3} \left(v_3-v_2\right)\sqrt{h^2+x^2+y^2} \ddd x \dd y\right]\\
&\!=\! \frac{6}{\volA \volB}\!\left[\iint_{\! D_3\cup D_2}\!\! \left(\!\frac{4 \sqrt{2}}{3}\!-\!\frac{2 x}{\sqrt{3}}\!\right)\!\!\sqrt{h^2\!+\!x^2\!+\!y^2} \dd x \dd y \!+\!\!\! \iint_{\! D_3} \!\!\left(\!\frac{2 x}{\sqrt{3}}\!-\!\frac{\sqrt{2}}{3}\!\right)\!\!\sqrt{h^2\!+\!x^2\!+\!y^2} \dd x \dd y\right].
\end{split}
\end{equation}
Note that the second integral over domain $3$ is already in the form of an integral over standard fundamental triangle domain since $D_3 = D(\frac{\sqrt{6}}{6},\frac{\pi}{3})$. The first integral over domain $3\cup 2$ can be written in such manner after rotation and reflection. To obtain the correct transformation, we let $\varphi'$ to start (be zero) for the half-line connecting the origin with point $[\frac{1}{\sqrt{6}},\frac{\sqrt{2}}{2}]$, increasing in the clockwise direction. That is $\varphi = \pi/3 - \varphi'$ and thus $x = r \cos(\frac{\pi}{3}-\varphi')$ and $y = r \sin(\frac{\pi}{3}-\varphi')$. Expanding out the trigonometric functions and writing $x' = r \cos \varphi'$ and $y' = r \cos\varphi'$, we get
\begin{equation}
\begin{split}
    x & = r \cos\frac{\pi}{3} \cos\varphi' + r \sin\frac{\pi}{3} \sin\varphi' = x' \cos\frac{\pi}{3} + y' \sin\frac{\pi}{3} = \frac{1}{2}x' + \frac{\sqrt{3}}{2} y',\\
    y & = r \sin\frac{\pi}{3}\cos \varphi' - r \cos\frac{\pi}{3} \sin\varphi' = x' \sin\frac{\pi}{3} - y' \cos\frac{\pi}{3} = \frac{\sqrt{3}}{2}x' - \frac{1}{2} y'
\end{split}
\end{equation}
and so
\begin{equation}
   v_2 = \frac{4 \sqrt{2}}{3}-\frac{2 x}{\sqrt{3}} = \frac{4 \sqrt{2}}{3}-\frac{x'}{\sqrt{3}}-y'.
\end{equation}
Our integration domain $D_3 \sqcup D_2$ in $(x',y')$ is simply $D(\frac{\sqrt{6}}{3},\frac{\pi}{3})$. Note that $x^2 + y^2$ is invariant with respect to this transformation so $x^2+y^2 = x'^2+y'^2$. By scaling with $h$, we can write $L_{21r}$ in terms of the auxiliary integrals as
\begin{equation}
\label{eq:octaL21r}
\begin{split}
L_{21r} = \frac{6 h^3}{\volA \volB}\Bigg{[} &
\frac{4 \sqrt{2}}{3}I^{(1)}_{00}\left(\frac{\sqrt{6}}{3h},\frac{\pi}{3}\right)
-\frac{h}{\sqrt{3}}I^{(1)}_{10}\left(\frac{\sqrt{6}}{3h},\frac{\pi}{3}\right)
-hI^{(1)}_{01}\left(\frac{\sqrt{6}}{3h},\frac{\pi}{3}\right) \\
& + \frac{2h}{\sqrt{3}}I^{(1)}_{10}\left(\frac{\sqrt{6}}{6h},\frac{\pi}{3}\right)
- \frac{\sqrt{2}}{3}I^{(1)}_{00}\left(\frac{\sqrt{6}}{6h},\frac{\pi}{3}\right)\Bigg{]}
\end{split}
\end{equation}
with $\frac{\sqrt{6}}{3h} = \frac{\sqrt{2}}{2}$ and $\frac{\sqrt{6}}{6h} = \frac{\sqrt{2}}{4}$. Via recursions (see Table \ref{tab:auxil} in Appendix), we get
\begin{equation}
    L_{21r} = -\frac{10}{27}+\frac{47}{54 \sqrt{2}}-\frac{16 \pi
   }{81}+\frac{143 \ln 3}{648
   \sqrt{2}}+\frac{32}{81} \arccot\sqrt{2}+\frac{85}{81}
   \sqrt{2} \arccoth\sqrt{2}
\end{equation}

\phantomsection
\subsubsection{\texorpdfstring{L\textsubscript{22r}}{L22r}}
Again, we use the overlap formula for $A$ and $B$ being opposite faces of $K$. By symmetry, we again integrate $\vol{A \cap \operatorname{proj} B + k}$ over one sixth of all positions of vector $k$ (see Figure \ref{fig:O_over_full}, in which white numbers represent the number of sides of a polygon of intersection of $AB$ projections with respect to position of the shift vector $k$ -- black dot).

\begin{figure}[h]
    \centering
     \includegraphics[height=0.24\textwidth]{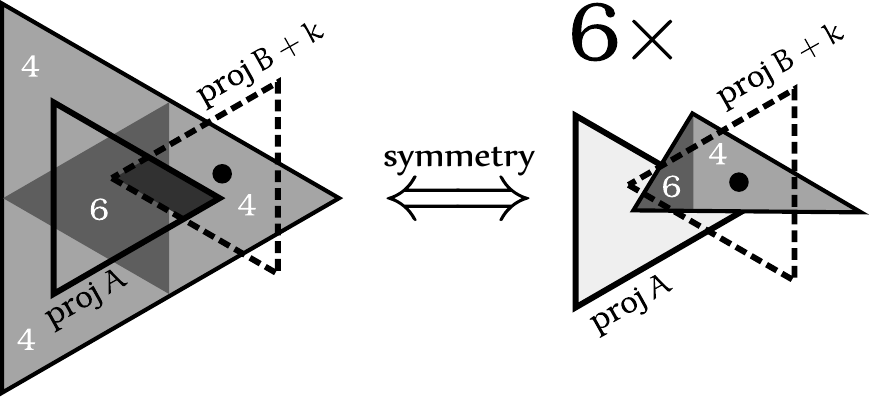}
    \caption{Overlap of the opposite faces of an octahedron}
    \label{fig:O_over_full}
\end{figure}

\noindent
Setting up the integral,
\begin{equation}
L_{22r} = \frac{6}{\volA \volB} \int_D \sqrt{h^2 + k^2}\vol{A \cap \operatorname{proj}B + k} \ddd k,
\end{equation}
where $h = 2/\sqrt{3}, \volA = \volB = \nu = \sqrt{3}/2$ and $D$ is a domain in Figure \ref{fig:O_over_full} on the right consisted of two subdomains labeled $6$ and $4$ according to the number of sides of the intersection (which is a polygon). That is, $D=D_6 \sqcup D_4$. Let $k = (x,y)$ and denote $v_j = \vol{A \cap \operatorname{proj}B + k}$ for those $k$ which lie in $\in D_j$, then the subdomain
\begin{itemize}
    \item $D_6$ is again a triangle with vertices $[0,0]$, $[\frac{\sqrt{6}}{6},0]$, $[\frac{\sqrt{6}}{6},\frac{\sqrt{2}}{2}]$ in which $v_6 = \frac{1}{\sqrt{3}} - \frac{\sqrt{3}}{2}x^2 - \frac{\sqrt{3}}{2}y^2$
    \item $D_4$ is a triangle with vertices $[\frac{\sqrt{6}}{6},0]$, $[\frac{2\sqrt{6}}{3},0]$, $[\frac{\sqrt{6}}{6},\frac{\sqrt{2}}{2}]$ in which $v_4 =\frac{4}{3 \sqrt{3}}-\frac{2 \sqrt{2} x}{3}+\frac{x^2}{2 \sqrt{3}}-\frac{\sqrt{3} y^2}{2}$
\end{itemize}
Domains $6,4$ coincide with $3,2$ in $L_{21r}$ case, that is $D_6 = D_3$ and $D_4=D_2$. Note that in general, $\vol{A \cap \operatorname{proj}B + k}$ is \textbf{quadratic} in $(x,y)$ in the subdomains. By inclusion/exclusion, we can write the integral as
\begin{equation}
\begin{split}
L_{22r} = \frac{6}{\volA \volB}\Big{[} & \iint_{D_6\cup D_4} v_4\sqrt{h^2+x^2+y^2} \ddd x \dd y + \iint_{D_6} \left(v_6-v_4\right)\sqrt{h^2+x^2+y^2} \ddd x \dd y\Bigg{]}\\
= \frac{6}{\volA \volB}\Big{[} & \iint_{D_6\cup D_4} \left(\frac{4}{3 \sqrt{3}}-\frac{2 \sqrt{2} x}{3}+\frac{x^2}{2 \sqrt{3}}-\frac{\sqrt{3} y^2}{2}\right)\sqrt{h^2+x^2+y^2} \ddd x \dd y \\
& + \iint_{D_6} \left(-\frac{1}{3 \sqrt{3}}+\frac{2 \sqrt{2} x}{3}-\frac{2 x^2}{\sqrt{3}}\right)\sqrt{h^2+x^2+y^2} \ddd x \dd y\Big{]}.
\end{split}
\end{equation}
Again, the integral over domain $6$ is in a standard form. The other integral must be first transformed using $x = \frac{1}{2}x' + \frac{\sqrt{3}}{2} y'$, $y = \frac{\sqrt{3}}{2}x' - \frac{1}{2} y'$, which gives
\begin{equation}
v_4 = \frac{4}{3 \sqrt{3}}-\frac{2 \sqrt{2} x}{3}+\frac{x^2}{2 \sqrt{3}}-\frac{\sqrt{3} y^2}{2} = \frac{4}{3 \sqrt{3}}-\frac{\sqrt{2} x'}{3}-\sqrt{\frac{2}{3}} y'-\frac{x^{\prime \, 2}}{\sqrt{3}}+x' y'
\end{equation}
and therefore
\begin{equation}
\label{eq:octaL22r}
\begin{split}
L_{22r} = \frac{6 h^3}{\volA \volB}\Bigg{[} & 
\frac{4}{3 \sqrt{3}}I^{(1)}_{00}\left(\frac{\sqrt{2}}{2},\frac{\pi}{3}\right)
-\frac{\sqrt{2} h}{3}I^{(1)}_{10}\left(\frac{\sqrt{2}}{2},\frac{\pi}{3}\right)
-\sqrt{\frac{2}{3}} h I^{(1)}_{01}\left(\frac{\sqrt{2}}{2},\frac{\pi}{3}\right)\\
&-\frac{h^2}{\sqrt{3}}I^{(1)}_{20}\left(\frac{\sqrt{2}}{2},\frac{\pi}{3}\right)
+h^2I^{(1)}_{11}\left(\frac{\sqrt{2}}{2},\frac{\pi}{3}\right)\\
&-\frac{1}{3 \sqrt{3}}I^{(1)}_{00}\left(\frac{\sqrt{2}}{4},\frac{\pi}{3}\right)
+\frac{2 \sqrt{2} h}{3}I^{(1)}_{10}\left(\frac{\sqrt{2}}{4},\frac{\pi}{3}\right)
-\frac{2 h^2}{\sqrt{3}}I^{(1)}_{20}\left(\frac{\sqrt{2}}{4},\frac{\pi}{3}\right)
\Bigg{]}.
\end{split}
\end{equation}
Going through all recursions, we get, after simplifications
\begin{equation}
L_{22r} = \frac{8}{45}+\frac{\sqrt{2}}{9}-\frac{32 \pi
   }{135}+\frac{293 \ln 3}{270
   \sqrt{2}}-\frac{64}{135} \arccot\sqrt{2}+\frac{124}{135}
   \sqrt{2} \arccoth\sqrt{2}
\end{equation}

\phantomsection
\subsubsection{\texorpdfstring{L\textsubscript{33}}{L33}}
Putting everything together by using \eqref{Eq:Octa}, we finally arrive at
\begin{equation}
    L_{33} = \frac{4}{105}+\frac{13 \sqrt{2}}{105}-\frac{4 \pi }{45}+\frac{109 \ln 3}{630 \sqrt{2}}+\frac{16\arccot\sqrt{2}}{315}+\frac{158\arccoth\sqrt{2}}{315}\sqrt{2}.
\end{equation}
Rescaling, we get our mean distance in a regular octahedron having unit volume
\begin{equation}
    L_{33}\big{|}_{\volK = 1} = \sqrt[3]{\frac{3}{4}}\left(\frac{4}{105}\!+\!\frac{13 \sqrt{2}}{105}\!-\!\frac{4 \pi }{45}\!+\!\frac{109 \ln 3}{630 \sqrt{2}}\!+\!\frac{16\arccot\sqrt{2}}{315}\!+\!\frac{158\arccoth\sqrt{2}}{315}\sqrt{2} \right) \!\approx 0.65853073.
\end{equation}

\subsection{Regular icosahedron}
Regular icosahedron shares many features with regular octahedron. We have already seen that the Crofton Reduction Technique itself is very powerful to reduce the the mean distance until two domains from which we select two points have empty affine hull. As a consequence, the only remaining terms in the icosahedron expansion are the parallel edge-face and parallel face-face configurations. Note that these two parallel configurations have the same overlap diagram as the octahedron has.

Let $\phi = (1+\sqrt{5})/2$ be the Golden ratio. A standard selection of vertices is $[\pm \phi,\pm 1,0]$ and all of their cyclic permutations. That way, our edges have length $l=2$. The volume is equal to $\volK = 10(3+\sqrt{5})/3$ and the face area $\nu = \sqrt{3}$. Again, we put $P = L^p$. For the definition of various mean values $P_{ab} = L_{ab}^{(p)}$, see Figure \ref{fig:LinePickingIcosahedron}.
\begin{figure}[h]
    \centering
     \includegraphics[width=1.0\textwidth]{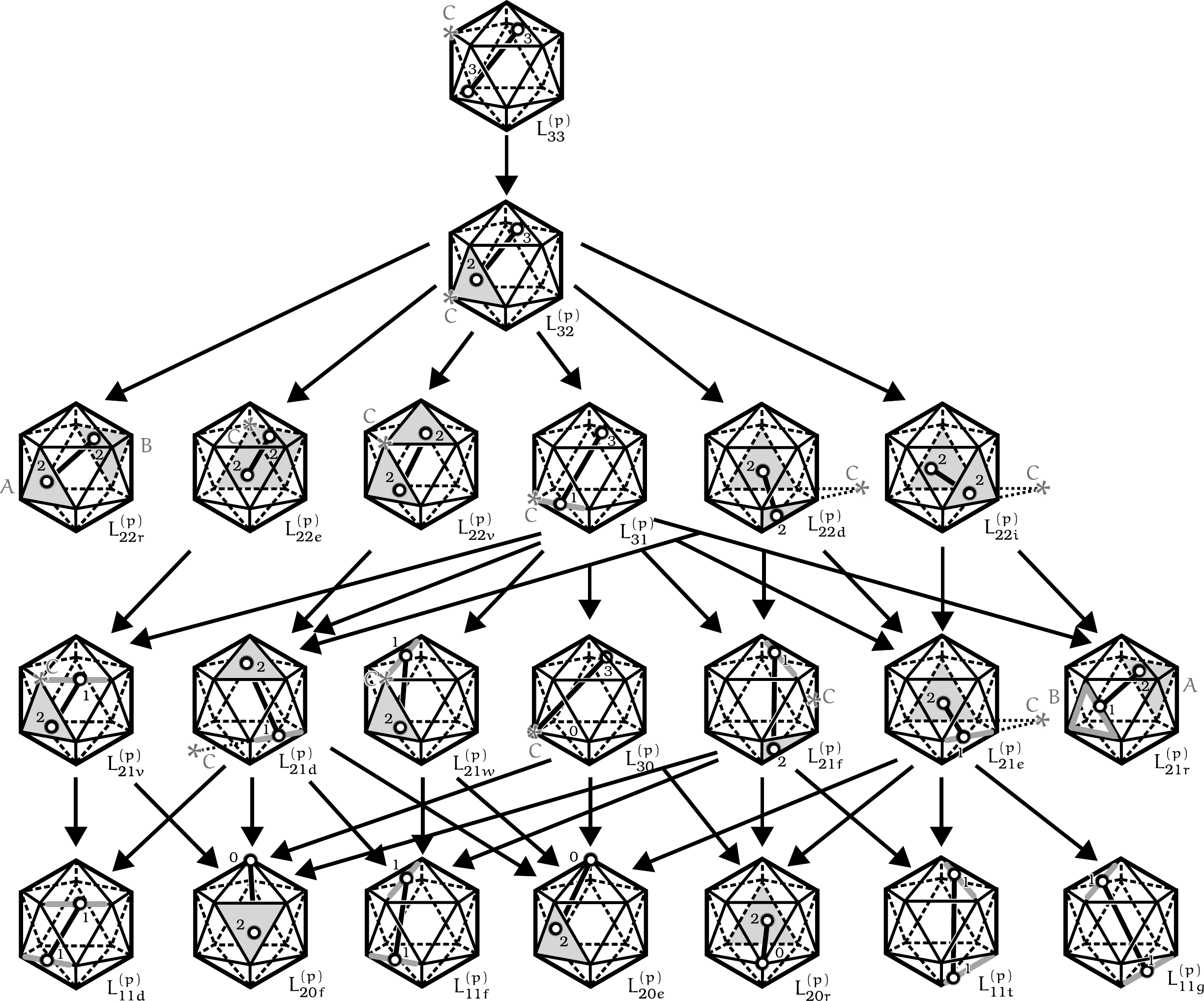}
    \caption{All different $L_{ab}^{(p)}$ configurations encountered for $K$ being a regular icosahedron}
    \label{fig:LinePickingIcosahedron}
\end{figure}

Performing the reduction, we get the set of equations:
\begin{align*}
p P_{33}& =6 (P_{32}-P_{33}),\\
p P_{32} & = 3(P_{22} - P_{32})+2(P_{31}-P_{32}),\\
p P_{31} & = 3 (P_{21}-P_{31}) + 1(P_{30}-P_{31}),\\
p P_{22e} & = 4 (P_{21v}-P_{22e}),\\
p P_{22v} & = 4 (P_{21d}-P_{22v}),\\
p P_{22d} & = 4 (P_{21}'-P_{22d}),\\
p P_{22i} & = 4 (P_{21}''-P_{22i}),\\
p P_{30} & = 3 (P_{20}-P_{30}),\\
p P_{21v} & = 2(P_{11d}-P_{21v}) + 1 (P_{20u}-P_{21v}),\\
p P_{21w} & = 2(P_{11f}-P_{21w}) + 1 (P_{20l}-P_{21w}),\\
p P_{21f} & = 2(P_{11}-P_{21f}) + 1 (P_{20}'-P_{21f}),\\
p P_{21d} & = 2(P_{11}'-P_{21d}) + 1 (P_{20}''-P_{21d}),\\
p P_{21e} & = 2(P_{11}''-P_{21e}) + 1 (P_{20}'''-P_{21e}),\\
\end{align*}
with
\begin{align*}
    P_{22} & = \frac{2
   P_{22d}}{5}+\frac{P_{22r}}{10}+\frac{P_{22v}}{5}+\frac{P_{22e}}{10 \phi ^2}+\frac{\phi ^2 P_{22i}}{10},\\
    P_{21} & = \frac{\phi  P_{21e}}{5}+\frac{P_{21f}}{2 \phi \sqrt{5}}+\frac{P_{21r}}{5}+\frac{P_{21d}}{5}+\frac{P_{21v}}{5 \phi ^2}+\frac{P_{21w}}{10 \phi },\\
    P_{21}' &= \frac{P_{21f}}{2}-\frac{\phi  P_{21d}}{2}+\frac{\phi ^2 P_{21e}}{2},\\
    P_{21}'' & =\phi ^2 P_{21r}-\phi  P_{21e},\\
    P_{20} & = \frac{P_{20f}}{2 \phi ^2}+\frac{P_{20e}}{2 \phi
   }+\frac{P_{20r}}{2},\\
    P_{20}' & =\phi  P_{20r}-\frac{P_{20f}}{\phi },\\
    P_{20}'' & =\phi ^2 P_{20e}-\phi  P_{20f},\\
    P_{20}''' & =\phi ^2 P_{20r}-\phi  P_{20e},\\
    P_{11} & =2 P_{11t}-P_{11f},\\
    P_{11}' & =\phi ^2 P_{11f}-\phi  P_{11d},\\
    P_{11}'' & =\phi ^2 P_{11g}-\phi  P_{11t}.
\end{align*}
Solving the system, we get, after simplifications,
\begin{equation}
\begin{split}
P_{33} & = \frac{18 \left(12 \phi ^2 P_{11d}-12 \phi^4 P_{11f}+4 \phi^8 P_{11g}+12 \phi ^2 P_{11t}-6 \phi ^4 P_{20e}+4 \phi ^8 P_{20r}+6
   P_{20f}\right)}{5 \phi^4 (3+p) (4+p) (5+p)(6+p)}\\
& +\frac{108 \phi^2 P_{21r}}{5 (4+p) (5+p) (6+p)}+\frac{9 P_{22r}}{5 (5+p) (6+p)}.
\end{split}
\end{equation}
When $p=1$, we get for the mean distance
\begin{equation}\label{Eq:Icosa}
\begin{split}
L_{33} & = \frac{3 L_{22r}}{70}-\frac{9 L_{11f}}{175}-\frac{9 L_{20e}}{350}+\frac{9 L_{20f}}{350 \phi ^4}+\frac{9 L_{11d}}{175 \phi^2}+\frac{9 L_{11t}}{175 \phi ^2}+\frac{18\phi^2 L_{21r}}{175}+\frac{3\phi ^4 L_{11g}}{175}+\frac{3\phi ^4 L_{20r}}{175}.
\end{split}
\end{equation}

\phantomsection
\subsubsection{\texorpdfstring{L\textsubscript{11d}}{L11d}}
Let $A'= \overline{[1,0, \phi][-1,0, \phi]}$ and $B'=\overline{[0,\phi,1][\phi,1,0]}$ be edges of $K$, then $L_{11d} = L_{A'B'}$. By shifting, $L_{A'B'}=L_{OA}$, where $O=[0,0,0]$ is the origin and $A=A'-B'$ is a polygon with vertices $[1,-\phi ,\frac{1}{\phi }], [-\frac{1}{\phi },-1,\phi], [-\phi ^2,-1,\phi], [-1,-\phi ,\frac{1}{\phi }]$ (a parallelogram) having area $\volA = \sqrt{10-2 \sqrt{5}}$. Projecting $O$ onto $\mathcal{A}(A)$, we obtain $\operatorname{proj}_A O =[0,-1-\frac{1}{\sqrt{5}},\frac{2}{\sqrt{5}}]$ and separation $h=\sqrt{2+\frac{2}{\sqrt{5}}}$. Point-Polygon formula yields
\begin{equation}
\begin{split}
L_{11d} = \frac{2 h^3}{\volA} \Bigg{(} & 2I^{(1)}_{00}\left(\frac{1}{2 \phi ^2},\frac{2 \pi }{5}\right)-I^{(1)}_{00}\left(\frac{1}{2},\frac{\pi}{5}\right)+I^{(1)}_{00}\left(\frac{1}{2},\frac{2 \pi }{5}\right)\Bigg{)} \approx 2.0431430525135.
\end{split}
\end{equation}
Explicitly, after series of simplifications on $I^{(1)}_{00}(\cdot,\cdot)$ by recursion formulae, we obtain
\begin{equation}
\begin{split}
    L_{11d} & = \frac{5}{6}+\frac{1}{2 \sqrt{5}}+\frac{1}{15} (2 \pi ) \left(3+\sqrt{5}\right)-\frac{8}{15} \left(3+\sqrt{5}\right) \arccot\phi\\
    &-\frac{8}{15} \left(3+\sqrt{5}\right) \arccot\left(\phi ^2\right)+\frac{1}{60} \left(31-3 \sqrt{5}\right) \ln 3+\frac{13}{120}\left(3+\sqrt{5}\right) \ln 5.
\end{split}
\end{equation}

\phantomsection
\subsubsection{\texorpdfstring{L\textsubscript{11g}}{L11g}}
Let $A'$ be the same edge as in $L_{11d}$ and $B'=\overline{[1,0,\phi][-1,0,\phi]}$, then $L_{11g} = L_{A'B'}$. By shifting, $L_{A'B'}=L_{OA}$, where $O=[0,0,0]$ and $A=A'-B'$ is a polygon with vertices $[0,0,2 \phi], [1,-\phi ,\phi ^2], [-1,-\phi ,\phi ^2], [-2,0,2 \phi]$ having area $\volA = 2\sqrt{3}$. Projecting $O$ onto $\mathcal{A}(A)$, we obtain $\operatorname{proj}_A O =[0,-\frac{2 \phi }{3},\frac{2 \phi ^3}{3}]$ and separation $h=\sqrt{\frac{14}{3}+2 \sqrt{5}}$. Point-Polygon formula yields
\begin{equation}
\begin{split}
L_{11g} = \frac{2 h^3}{\volA} \Bigg{(} & 2 I^{(1)}_{00}\left(\frac{1}{2 \phi ^2},\frac{\pi }{3}\right)+I^{(1)}_{00}\left(\frac{1}{\phi ^2},\frac{\pi }{3}\right)\Bigg{)} \approx 3.1806727116118.
\end{split}
\end{equation}
Explicitly, after series of simplifications,
\begin{equation}
\begin{split}
    L_{11g} & = \frac{1}{9}+\frac{\sqrt{5}}{9}+\frac{2}{9} \sqrt{2 \left(5+\sqrt{5}\right)}+\frac{4}{45} \left(9+4 \sqrt{5}\right) \pi -\frac{16}{27} \left(9+4
   \sqrt{5}\right) \arccot \phi\\
   &+\left(\frac{43}{27}+\frac{2 \sqrt{5}}{3}\right) \arccoth\phi+\frac{2}{27} \left(23+9 \sqrt{5}\right)
   \arccsch\phi-\left(\frac{43}{108}+\frac{\sqrt{5}}{6}\right) \ln 5.
\end{split}
\end{equation}

\phantomsection
\subsubsection{\texorpdfstring{L\textsubscript{11f}}{L11f}}
Let $A'$ be the same edge as in $L_{11d}$ and $B'=\overline{[\phi,1,0][\phi,-1,0]}$, then $L_{11f} = L_{A'B'}$. By shifting, $L_{A'B'}=L_{OA}$, where $O=[0,0,0]$ and $A=A'-B'$ is a polygon with vertices $[-\frac{1}{\phi },-1,\phi], [-\frac{1}{\phi },1,\phi], [-\phi ^2,1,\phi], [-\phi ^2,-1,\phi]$ having area $\volA = 4$. Projecting $O$ onto $\mathcal{A}(A)$, we obtain $\operatorname{proj}_A O =[0,0,\phi]$ and separation $h=\phi$. Point-Polygon formula yields
\begin{equation}
\begin{split}
L_{11f} = \frac{2 h^3}{\volA} \Bigg{(} & I^{(1)}_{00}\left(\frac{1}{\phi },\arctan\left(\phi ^2\right)\right)+I^{(1)}_{00}\left(\phi ,\arctan\left(\frac{1}{\phi^2}\right)\right)-I^{(1)}_{00}\left(\frac{1}{\phi ^2},\arctan \phi \right)\\
&-I^{(1)}_{00}\left(\frac{1}{\phi },\arctan\frac{1}{\phi}\right) \Bigg{)} \approx 2.3977565034445.
\end{split}
\end{equation}
Explicitly, after series of simplifications,
\begin{equation}
\begin{split}
    L_{11f} & = \frac{5}{6}+\frac{\sqrt{5}}{6}-\frac{\pi}{96}  \left(1+\sqrt{5}\right)^3+\frac{1}{2} \left(2+\sqrt{5}\right) \arccot \phi -\frac{1}{6}\left(2+\sqrt{5}\right) \arccot\left(\phi ^2\right)\\
    &+\frac{1}{24} \left(39+17 \sqrt{5}\right) \arccoth \phi +\frac{1}{48} \left(1-5\sqrt{5}\right) \ln 3-\frac{1}{96} \left(17+11 \sqrt{5}\right) \ln 5.
\end{split}
\end{equation}

\phantomsection
\subsubsection{\texorpdfstring{L\textsubscript{11t}}{L11t}}
Again, let $A'$ be the same edge as in $L_{11d}$ and $B'=\overline{[1,0,-\phi][\phi,1,0]}$, then $L_{11t} = L_{A'B'}$. By shifting, $L_{A'B'}=L_{OA}$, where $O=[0,0,0]$ and $A=A'-B'$ is a polygon with vertices $[0,0,2 \phi], [-\frac{1}{\phi },-1,\phi], [-\phi ^2,-1,\phi], [-2,0,2 \phi]$ having area $\volA = \sqrt{2 \left(5+\sqrt{5}\right)}$. Projecting $O$ onto $\mathcal{A}(A)$, we obtain $\operatorname{proj}_A O =[0,-1-\frac{1}{\sqrt{5}},\frac{2}{\sqrt{5}}]$ and separation $h=\sqrt{2+\frac{2}{\sqrt{5}}}$. Point-Polygon formula yields
\begin{equation}
L_{11t} = \frac{2 h^3}{\volA} \Bigg{(} I^{(1)}_{00}\left(\frac{1}{2},\frac{\pi }{5}\right)-I^{(1)}_{00}\left(\frac{1}{2},\frac{2 \pi }{5}\right)+I^{(1)}_{00}\left(\phi ,\frac{\pi }{5}\right) \Bigg{)} \approx 2.8940519649490.
\end{equation}
Explicitly, after series of simplifications,
\begin{equation}
\begin{split}
    L_{11t} & = \frac{4}{3} \sqrt{1+\frac{2}{\sqrt{5}}}-\frac{1}{6}-\frac{\sqrt{5}}{6}-\frac{8\pi}{75} \left(1+\sqrt{5}\right) +\frac{8}{15}\left(1+\sqrt{5}\right) \arccot \phi \\
    & +\frac{4}{15} \left(8+3 \sqrt{5}\right) \arccsch \phi-\frac{13}{120}
   \left(1+\sqrt{5}\right) \ln 5.
\end{split}
\end{equation}

\phantomsection
\subsubsection{\texorpdfstring{L\textsubscript{20e}}{L20e}}
Let $A$ be the face of $K$ with vertices $[1,0,\phi]$, $[-1,0,\phi ]$, $[0,\phi ,1]$ (an equilateral triangle) and let $B$ be vertex $[\phi ,-1,0]$, then $L_{20e} = L_{AB}$. Projecting $B$ onto $\mathcal{A}(A)$, we obtain $\operatorname{proj}_A B =[\phi ,-\frac{1}{3},\frac{2 \phi ^2}{3}]$ and separation $h= 2\phi /\sqrt{3}$. By Point-Polygon formula,
\begin{equation}
\begin{split}
L_{20e} = \frac{2 h^3}{\nu} \Bigg{(} & I^{(1)}_{00}\left(\frac{1}{2 \phi ^2},\frac{\pi }{3}\right) - I^{(1)}_{00}\left(\frac{1}{2 \phi ^2},\arctan\left(\sqrt{15}+2\sqrt{3}\right)\right)\\
& +I^{(1)}_{00}\left(\frac{\phi ^2}{2},\arctan\left(\sqrt{15}-2\sqrt{3}\right)\right)\Bigg{)} \approx 2.688729552544.
\end{split}
\end{equation}
Explicitly, after series of simplifications,
\begin{equation}
    \begin{split}
    L_{20e} & = \frac{7}{9}+\frac{\sqrt{5}}{9}+\frac{8\pi}{27} \left(2+\sqrt{5}\right) - \frac{16}{9} \left(2+\sqrt{5}\right) \arccot\phi\\
    &+\frac{1}{27}\left(104+47 \sqrt{5}\right) \arccoth\phi-\frac{1}{108} \left(112+61 \sqrt{5}\right) \ln 5.
    \end{split}
\end{equation}

\phantomsection
\subsubsection{\texorpdfstring{L\textsubscript{20r}}{L20r}}
Let $A$ be the same face of $K$ as in the section on $L_{20e}$ and let $B$ be vertex $[1,0,-\phi]$, then $L_{20r} = L_{AB}$. Projecting $B$ onto $\mathcal{A}(A)$, we obtain $\operatorname{proj}_A B =[1,\frac{2 \phi }{3},\frac{\sqrt{5} \phi }{3}]$ and separation $h=2\phi ^2/\sqrt{3}$. By Point-Polygon formula,
\begin{equation}
L_{20r} = \frac{2 h^3}{\nu} \Bigg{(} I^{(1)}_{00}\left(\frac{1}{\phi^2},\frac{\pi}{3}\right)-I^{(1)}_{00}\left(\frac{1}{2 \phi ^2},\frac{\pi }{3}\right)\Bigg{)} \approx 3.28394367574.
\end{equation}
Explicitly, after series of simplifications,
\begin{equation}
    \begin{split}
   L_{20r} & = \frac{4}{9} \sqrt{2 \left(5+\sqrt{5}\right)}-\frac{1}{9}-\frac{\sqrt{5}}{9}-\frac{16 \pi}{135} \left(9+4 \sqrt{5}\right)+\frac{16}{27}
   \left(9+4 \sqrt{5}\right) \arccot\phi\\
   &-\left(\frac{43}{27}+\frac{2 \sqrt{5}}{3}\right) \arccoth\phi+\frac{4}{27} \left(23+9\sqrt{5}\right) \arccsch\phi+\left(\frac{43}{108}+\frac{\sqrt{5}}{6}\right) \ln 5.
    \end{split}
\end{equation}

\phantomsection
\subsubsection{\texorpdfstring{L\textsubscript{20f}}{L20f}}
Let $A$ be the same face of $K$ as in the section on $L_{20e}$ and let $B$ be vertex $[\phi,1,0]$, then $L_{20f} = L_{AB}$. Projecting $B$ onto $\mathcal{A}(A)$, we obtain $\operatorname{proj}_A B =[\phi ,\frac{\phi ^3}{3},\frac{2 \phi }{3}]$ and separation $h=2/\sqrt{3} $. By Point-Polygon formula,
\begin{equation}
\begin{split}
L_{20f} = \frac{2 h^3}{\nu} \Bigg{(} & I^{(1)}_{00}\left(\frac{\phi ^2}{2},\frac{\pi }{3}\right)-I^{(1)}_{00}\left(\frac{\sqrt{5}}{2},\arctan\sqrt{\frac{3}{5}}\right)\\
   &-I^{(1)}_{00}\left(\frac{\phi ^2}{2},\arctan\left(\sqrt{15}-2 \sqrt{3}\right)\right)\Bigg{)} \approx 2.2472771159735.
\end{split}
\end{equation}
Explicitly, after series of simplifications,
\begin{equation}
   L_{20f} = \frac{10}{9}+\frac{2 \sqrt{5}}{9}-\frac{8 \pi }{27}+\frac{32}{27} \arccot\phi+\frac{16}{27} \arccot(\phi^2)-\frac{17}{54}\sqrt{5} \ln 3+\left(\frac{1}{2}+\frac{5 \sqrt{5}}{27}\right) \ln 5.
\end{equation}

\phantomsection
\subsubsection{\texorpdfstring{L\textsubscript{21r}}{L21r}}
Let $A$ be a face of $K$ and $B$ be a boundary of the opposite face. In icosahedron $K$, two faces are separated by the distance $2\phi ^2/\sqrt{3}$. Since the overlap diagram of these faces is the same as the one associated to two opposite faces of an octahedron (see Figure \ref{fig:O_over_wire}), the coefficients of the expansion of irreducible $L_{21r}$ term into auxiliary integrals $I^{(1)}_{ij}$ \emph{match}. However, this is only valid provided the edge length is $\sqrt{2}$. Since our icosahedron $K$ has $l=2$, we first rescale our icosahedron by $1/\sqrt{2}$. In the final step, since the mean distance scales linearly, we have just rescale $L_{ab}$ back by multiplying it by $\sqrt{2}$. Hence, by using Equation \eqref{eq:octaL21r},
\begin{equation}
\begin{split}
L_{21r} & = L_{21r}\big{|}_{l=2} = \sqrt{2}L_{21r}\big{|}_{l=\sqrt{2}} =\sqrt{2}\frac{6 h^3}{\volA \volB}\Bigg{[}
\frac{4 \sqrt{2}}{3}I^{(1)}_{00}\left(\frac{\sqrt{6}}{3h},\frac{\pi}{3}\right)
-\frac{h}{\sqrt{3}}I^{(1)}_{10}\left(\frac{\sqrt{6}}{3h},\frac{\pi}{3}\right)
 \\
&-hI^{(1)}_{01}\left(\frac{\sqrt{6}}{3h},\frac{\pi}{3}\right) + \frac{2h}{\sqrt{3}}I^{(1)}_{10}\left(\frac{\sqrt{6}}{6h},\frac{\pi}{3}\right)
- \frac{\sqrt{2}}{3}I^{(1)}_{00}\left(\frac{\sqrt{6}}{6h},\frac{\pi}{3}\right)\Bigg{]} \approx 3.1819213671057,
\end{split}
\end{equation}
where $h = \sqrt{2}\phi^2/\sqrt{3}$, $ \volA = \sqrt{3}/2$ and $\volB = 3\sqrt{2}$ are the rescaled icosahedron opposite faces separation, rescaled face area and face perimeter, respectively. Contrary to the octahedron case, we now have $\frac{\sqrt{6}}{3h} = 1/\phi^2$ and $\frac{\sqrt{6}}{6h} = 1/(2\phi^2)$. Via recursions, we get after some simplifications,
\begin{equation}
\begin{split}
    L_{21r} & = \frac{227}{108}+\frac{107 \sqrt{5}}{108}-\frac{25}{27} \sqrt{10+\frac{22}{\sqrt{5}}}-\frac{8 \pi}{135} \left(9+4\sqrt{5}\right)+\frac{16}{81} \left(9+4 \sqrt{5}\right) \arccot \phi\\
   &+\left(\frac{1043}{324}+\frac{13 \sqrt{5}}{9}\right) \arccoth \phi +\left(\frac{179}{81}+\frac{7 \sqrt{5}}{9}\right) \arccsch\phi -\frac{1043+468 \sqrt{5}}{1296} \ln 5.
\end{split}
\end{equation}

\phantomsection
\subsubsection{\texorpdfstring{L\textsubscript{22r}}{L22r}}
Again, Overlap diagram of $L_{21r}$ configuration matches that of an octahedron. Immediately from Equation \eqref{eq:octaL22r}, by rescaling and replacing $\sqrt{2}/2$ by $1/\phi^2$ and $ \sqrt{2}/4$ by $1/(2\phi^2)$ in the first argument of $I^{(p)}_{ij}$ integrals, we get
\begin{equation}
\begin{split}
L_{22r} & = \sqrt{2}\frac{6 h^3}{\volA \volB}\Bigg{[} \frac{4}{3 \sqrt{3}}I^{(1)}_{00}\left(\frac{1}{\phi^2},\frac{\pi}{3}\right) -\frac{\sqrt{2} h}{3}I^{(1)}_{10}\left(\frac{1}{\phi^2},\frac{\pi}{3}\right) -\sqrt{\frac{2}{3}} h I^{(1)}_{01}\left(\frac{1}{\phi^2},\frac{\pi}{3}\right)\\
&-\frac{h^2}{\sqrt{3}}I^{(1)}_{20}\left(\frac{1}{\phi^2},\frac{\pi}{3}\right) +h^2I^{(1)}_{11}\left(\frac{1}{\phi^2},\frac{\pi}{3}\right)-\frac{1}{3 \sqrt{3}}I^{(1)}_{00}\left(\frac{1}{2\phi^2},\frac{\pi}{3}\right) +\frac{2 \sqrt{2} h}{3}I^{(1)}_{10}\left(\frac{1}{2\phi^2},\frac{\pi}{3}\right) \\
&-\frac{2 h^2}{\sqrt{3}}I^{(1)}_{20}\left(\frac{1}{2\phi^2},\frac{\pi}{3}\right) \Bigg{]} \approx 3.12998447304770,
\end{split}
\end{equation}
where $h=\sqrt{2}\phi^2/\sqrt{3}$ and $\volA = \volB = \sqrt{3}/2$. Explicitly, after some simplifications,
\begin{equation}
\begin{split}
L_{22r} & = \frac{4}{9} \left(3+2 \sqrt{5}\right) \sqrt{10+\frac{22}{\sqrt{5}}}-\frac{271}{45}-\frac{119}{9 \sqrt{5}}+\frac{16 \pi}{675}\left(78+35 \sqrt{5}\right)-\frac{32}{135} \left(67+30 \sqrt{5}\right) \arccot\phi\\
   &+\left(\frac{611}{45}+\frac{164 \sqrt{5}}{27}\right) \arccoth \phi-\frac{28}{135} \left(9+5 \sqrt{5}\right)
   \arccsch \phi-\left(\frac{611}{180}+\frac{41 \sqrt{5}}{27}\right) \ln 5.
\end{split}
\end{equation}

\phantomsection
\subsubsection{\texorpdfstring{L\textsubscript{33}}{L33}}
Putting everything together by using \eqref{Eq:Icosa}, we finally arrive at
\begin{equation}
\begin{split}
    L_{33} & = \frac{197}{525}+\frac{239}{525 \sqrt{5}}-\frac{44}{525} \sqrt{2+\frac{2}{\sqrt{5}}}-\frac{\left(17226+6269\sqrt{5}\right) \pi }{157500}-\frac{\left(2186+1413 \sqrt{5}\right) \arccot \phi}{15750}\\
   &+\frac{\left(82-75\sqrt{5}\right) \arccot\left(\phi ^2\right)}{5250}+\frac{\left(15969+7151 \sqrt{5}\right) \arccoth \phi}{12600}+\frac{4 \left(2139+881 \sqrt{5}\right) \arccsch \phi}{7875}\\
   &+\frac{\left(4449-1685\sqrt{5}\right) \ln 3}{42000}-\frac{\left(75783+37789 \sqrt{5}\right) \ln 5}{252000} \approx 1.66353152568500.
\end{split}
\end{equation}
Rescaling, we get our mean distance in a regular icosahedron having unit volume
\begin{equation}
    L_{33}\big{|}_{\volK = 1} = \frac{L_{33}}{\sqrt[3]{\frac{10}{3} \left(3+\sqrt{5}\right)}} \approx 0.64131248551.
\end{equation}

\subsection{Regular dodecahedron}
Finaly, we will calculte the mean distance in the regular dodecahedron. Let us choose the vertices as $[\pm\phi,\pm\phi,\pm\phi], [0,\pm 1, \pm \phi^2]$ and all their cyclic permutations ($\phi = (1+\sqrt{5})/2$ as usual). Under this choice, each edge has length $l=2$ and each face has area $\nu = \sqrt{25+10 \sqrt{5}}$.

\begin{figure}[h]
    \centering
     \includegraphics[width=1.0\textwidth]{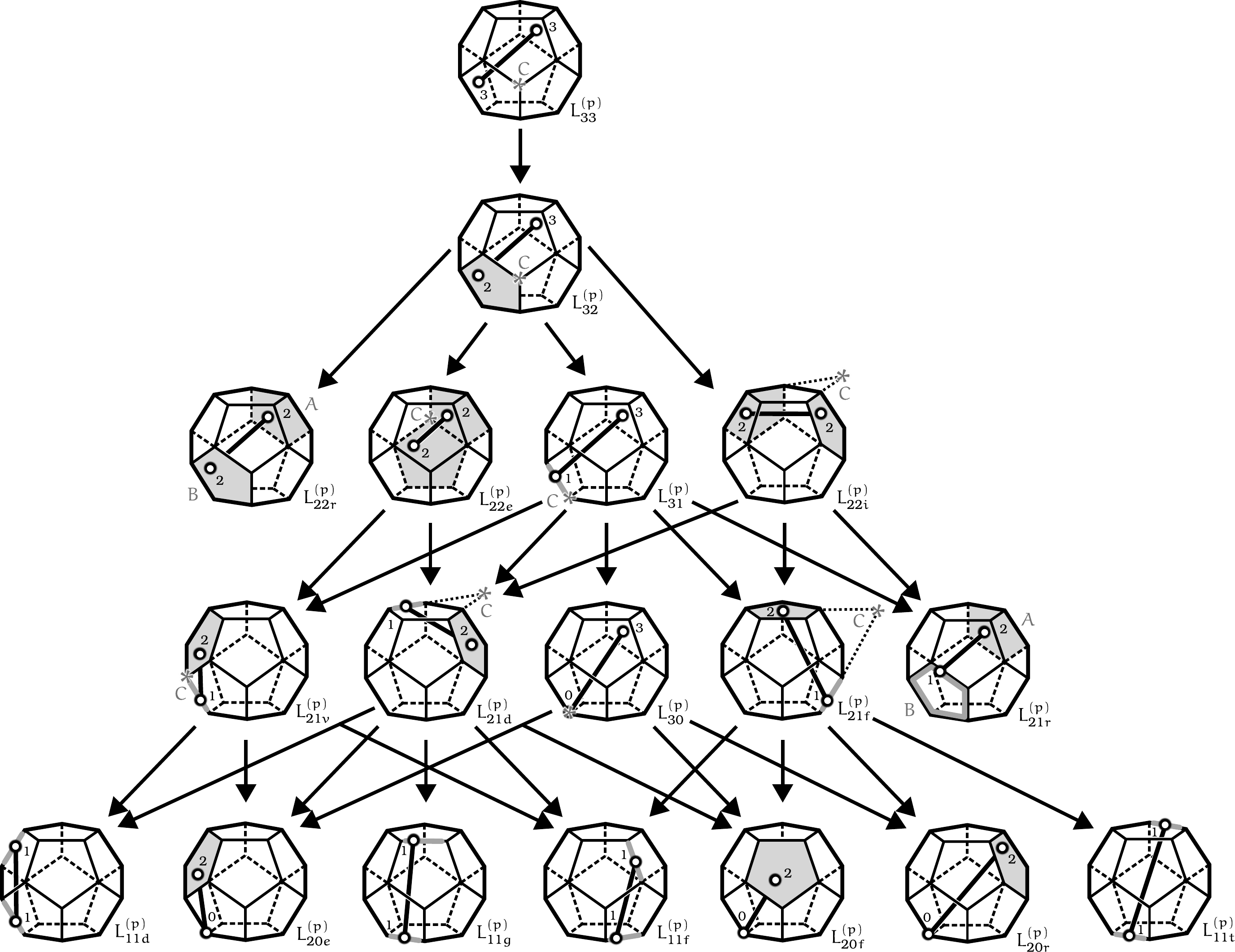}
    \caption{All different $L_{ab}^{(p)}$ configurations encountered for $K$ being a regular icosahedron}
    \label{fig:LinePickingDodecahedron}
\end{figure}

Performing CRT, we get the configurations shown in Figure \ref{fig:LinePickingDodecahedron}. Even though there are less configurations than for the icosahedron, the dodecahedron has more complicated overlap diagram (see Figure \ref{fig:OverlapDode}, there is ten-fold symmetry with respect to rotation and reflection). Distance moments are again connected through CRT via the following set of reduction equations
\begin{align*}
p P_{33}& =6 (P_{32}-P_{33}),\\
p P_{32} & = 3(P_{22} - P_{32})+2(P_{31}-P_{32}),\\
p P_{31} & = 3 (P_{21}-P_{31}) + 1(P_{30}-P_{31}),\\
p P_{22e} & = 4 (P_{21}'-P_{22e}),\\
p P_{22i} & = 4 (P_{21}''-P_{22i}),\\
p P_{30} & = 3 (P_{20}-P_{30}),\\
p P_{21v} & = 1 (P_{20e}-P_{21v})+2(P_{11}-P_{21v}),\\
p P_{21f} & =2 (P_{11}'-P_{21f})+1 (P_{20}'-P_{21f}),\\
p P_{21d}& =2(P_{11}''-P_{21d})+1 (P_{20}''-P_{21d})\\
\end{align*}
with
\begin{align*}
    P_{22} & = \frac{1}{6\phi}\left(\sqrt{5}P_{22e}+\phi P_{22r}+\phi^2\sqrt{5}P_{22i}\right),\\
    P_{21} & = \frac{P_{21r}}{3}+\frac{P_{21d}}{3}+\frac{\phi P_{21f}}{6}+\frac{P_{21v}}{6 \phi ^2},\\
    P_{21}' &= \frac{1}{\phi  \sqrt{5}} \left(P_{21v} + \phi^2 P_{21d} \right),\\
    P_{21}'' & =\frac{1}{\sqrt{5}}\left(\phi P_{21f} + \phi P_{21r} - P_{21d}\right),\\
    P_{20} & = \frac{1}{2 \phi^2}\left(P_{20e} + \phi P_{20f} + \phi^2 P_{20r}\right),\\
    P_{20}' & =\phi^2 P_{20r}-\phi P_{20f},\\
    P_{20}'' & =\phi^2 P_{20f}-\phi P_{20e},\\
    P_{11} & =\frac{1}{\phi  \sqrt{5}} \left(2 P_{11d}+ \phi P_{11f} \right),\\
    P_{11}' & =\frac{1}{\sqrt{5}}\left(2\phi P_{11t} - P_{11f}\right),\\
    P_{11}'' & =\frac{1}{\sqrt{5}} \left(\phi P_{11g} + \phi P_{11f} - P_{11d}\right).
\end{align*}
Solving the system, we get, after simplifications,
\begin{equation}
\begin{split}
P_{33} = \, & \frac{12\left(2 \sqrt{5} P_{11d}+5\phi P_{20e}+2 \phi^3 P_{11g}-2\phi^4\sqrt{5} P_{11f}-5 \phi^5 P_{20f} +4\sqrt{5}\phi^6 P_{20r} +2\phi^9 P_{11t}\right)}{\phi^4 \sqrt{5}\, (3+p) (4+p) (5+p) (6+p)}\\
   &+\frac{60 \phi P_{21r} }{\sqrt{5}(4+p) (5+p) (6+p)} + \frac{3 P_{22r}}{(5+p) (6+p)}.
\end{split}
\end{equation}
When $p=1$, we get for the mean distance
\begin{equation}\label{Eq:Dodeca}
L_{33} = \frac{L_{11d}}{35 \phi ^4}+\frac{L_{20e}}{14 \sqrt{5} \phi ^3}+\frac{L_{11g}}{35 \sqrt{5} \phi
   }-\frac{L_{11f}}{35}+\frac{L_{22r}}{14}-\frac{\phi  L_{20f}}{14 \sqrt{5}}+\frac{2 \phi  L_{21r}}{7\sqrt{5}}+\frac{2 \phi^2
   L_{20r}}{35}+\frac{\phi ^5 L_{11t}}{35 \sqrt{5}}.
\end{equation}

\phantomsection
\subsubsection{\texorpdfstring{L\textsubscript{11d}}{L11d}}
Let $A'= \overline{[0, \phi^2, 1][0, \phi^2, -1]}$ and $B'=\overline{[\phi,\phi,-\phi][1,0,-\phi^2]}$ be edges of $K$, then $L_{11d} = L_{A'B'}$. By shifting, $L_{A'B'}=L_{OA}$, where $O=[0,0,0]$ is the origin and $A=A'-B'$ is a polygon with vertices $[-\phi ,1,\phi ^2], [-1,\phi ^2,\sqrt{5} \phi], [-1,\phi ^2,\phi], [-\phi ,1,1/\phi]$ having area $\volA = 2\sqrt{3}$. Projecting $O$ onto $\mathcal{A}(A)$, we obtain $\operatorname{proj}_A O =[-1-\frac{\sqrt{5}}{3},\frac{2}{3},0]$ and separation $h=(1+\sqrt{5})/\sqrt{3}$. Point-Polygon formula yields
\begin{equation}
\begin{split}
L_{11d} = \frac{2 h^3}{\volA} \Bigg{(} & I^{(1)}_{00}\left(\frac{1}{2 \phi ^2},\frac{\pi }{3}\right)+I^{(1)}_{00}\left(\frac{\sqrt{5}}{2},\frac{\pi }{3}\right)-I^{(1)}_{00}\left(\frac{1}{2 \phi ^2},\arctan\left(\sqrt{3}\left(2+\sqrt{5}\right)\right)\right)\\
   &-I^{(1)}_{00}\left(\frac{\sqrt{5}}{2},\arctan\sqrt{\frac{3}{5}}\right)\Bigg{)} \approx 3.1367199950978.
\end{split}
\end{equation}
Explicitly, after series of simplifications,
\begin{equation}
\begin{split}
    L_{11d} & = \frac{10 \sqrt{2}}{9}-\frac{\sqrt{5}}{3}+\frac{5 \sqrt{10}}{9}-\frac{5}{9}-\frac{2 \pi}{27} \left(2+\sqrt{5}\right)-\frac{17}{108}\left(5+2 \sqrt{5}\right) \ln 3-\frac{1}{108} \left(4+7 \sqrt{5}\right) \ln 5\\
   &+\frac{4}{27} \left(2+\sqrt{5}\right) \left(2 \arccot 2+2 \arccot\sqrt{2}-\arccos\frac{2}{3}\right)+\frac{17}{108} \left(5+2 \sqrt{5}\right) \arccosh\frac{13}{3}.
\end{split}
\end{equation}

\phantomsection
\subsubsection{\texorpdfstring{L\textsubscript{11g}}{L11g}}
Let $A'$ be the same edge as in $L_{11d}$ and $B'=\overline{[\phi,-\phi,\phi][\phi^2,-1,0]}$, then $L_{11g} = L_{A'B'}$. By shifting, $L_{A'B'}=L_{OA}$, where $O=[0,0,0]$ and $A=A'-B'$ is a polygon with vertices $[-\phi ,\phi ^3,-1/\phi], [-\phi ^2,\sqrt{5} \phi ,1], [-\phi ^2,\sqrt{5} \phi ,-1], [-\phi ,\phi ^3,-\phi ^2]$ having area $\volA = \sqrt{10-2 \sqrt{5}}$. Projecting $O$ onto $\mathcal{A}(A)$, we obtain $\operatorname{proj}_A O =[-1-\frac{3}{\sqrt{5}},2+\frac{4}{\sqrt{5}},0]$ and separation $h=\sqrt{10+\frac{22}{\sqrt{5}}}$. Point-Polygon formula yields
\begin{equation}
\begin{split}
L_{11g} = \frac{2 h^3}{\volA} \Bigg{(} & 2 I^{(1)}_{00}\left(\frac{1}{2 \phi ^4},\frac{2 \pi }{5}\right)-I^{(1)}_{00}\left(\frac{1}{2 \phi ^2},\frac{\pi}{5}\right)+I^{(1)}_{00}\left(\frac{1}{2 \phi ^2},\frac{2 \pi }{5}\right) \Bigg{)} \approx 4.60478605392525.
\end{split}
\end{equation}
Explicitly, after series of simplifications,
\begin{equation}
\begin{split}
    L_{11g} & = \frac{5}{6}-\frac{1}{3 \sqrt{2}}+\frac{11}{6 \sqrt{5}}+\frac{1}{\sqrt{10}}-\left(\frac{47}{5}+\frac{21}{\sqrt{5}}\right) \pi +\frac{1}{120}\left(219+97 \sqrt{5}\right) \ln 3\\
    &+\frac{2}{15} \left(47+21 \sqrt{5}\right) \left(\arccos\frac{2}{3}+2 \arccos\frac{1}{\sqrt{41}}+2 \arccos\frac{3}{\sqrt{41}}-2 \arccot\sqrt{2}\right)\\
    &+\frac{1}{120} \left(219+97 \sqrt{5}\right) \left(\arccosh\frac{7}{3}-\arccosh 3 \right)\\
    &+\frac{1}{60} \left(91+33 \sqrt{5}\right) \left(\arccosh\frac{9}{\sqrt{41}}-\arccosh\frac{7}{\sqrt{41}}\right).
\end{split}
\end{equation}

\phantomsection
\subsubsection{\texorpdfstring{L\textsubscript{11f}}{L11f}}
Let $A'$ be the same edge as in $L_{11d}$ and $B'=\overline{[1,0,-\phi^2][-1,0,-\phi^2]}$, then $L_{11f} = L_{A'B'}$. By shifting, $L_{A'B'}=L_{OA}$, where $O=[0,0,0]$ and $A=A'-B'$ is a polygon with vertices $[-1,\phi ^2,\sqrt{5} \phi], [1,\phi ^2,\sqrt{5} \phi], [1,\phi ^2,\phi], [-1,\phi ^2,\phi]$ having area $\volA = 4$. Projecting $O$ onto $\mathcal{A}(A)$, we obtain $\operatorname{proj}_A O =[0,\phi^2,0]$ and separation $h=\phi^2$. Point-Polygon formula yields
\begin{equation}
\begin{split}
L_{11f} = \frac{2 h^3}{\volA} \Bigg{(} & I^{(1)}_{00}\left(\frac{1}{\phi ^2},\arctan\sqrt{5} \phi\right)+I^{(1)}_{00}\left(\frac{\sqrt{5}}{\phi },\arctan\frac{1}{\sqrt{5} \phi }\right)-I^{(1)}_{00}\left(\frac{1}{\phi ^2},\arctan \phi\right)
\\& -I^{(1)}_{00}\left(\frac{1}{\phi },\arctan\frac{1}{\phi}\right) \Bigg{)} \approx 3.770095521642.
\end{split}
\end{equation}
Explicitly, after series of simplifications,
\begin{equation}
\begin{split}
    L_{11f} & = \frac{5}{3 \sqrt{2}}-\frac{1}{2}+\sqrt{\frac{5}{2}}-\frac{\sqrt{5}}{6}+\left(\frac{3}{8}+\frac{\sqrt{5}}{6}\right) \pi +\frac{1}{48} \left(125+53 \sqrt{5}\right) \arccos\frac{9}{\sqrt{41}}\\
    &+\left(\frac{3}{8}+\frac{\sqrt{5}}{6}\right) \left(2 \arccos\frac{13}{3 \sqrt{41}}+2 \arccot 2-\arccos\frac{1}{9}-2 \arccos\frac{3}{5}-2 \arccos\frac{1}{\sqrt{41}}\right)\\
    &+\frac{1}{48} \left(23+9 \sqrt{5}\right) \arccosh\frac{13}{3}-\frac{1}{48} \left(125+53 \sqrt{5}\right) \arccosh\frac{7}{\sqrt{41}}\\
    &-\frac{1}{48} \left(37+17 \sqrt{5}\right) \arcsinh 2-\frac{1}{48} \left(23+9 \sqrt{5}\right) \log (3)+\frac{1}{96} \left(37+17 \sqrt{5}\right) \ln 5.
\end{split}
\end{equation}

\phantomsection
\subsubsection{\texorpdfstring{L\textsubscript{11t}}{L11t}}
Again, let $A'$ be the same edge as in $L_{11d}$ and $B'=\overline{[\phi,-\phi,-\phi][0,-\phi^2,-1]}$, then $L_{11t} = L_{A'B'}$. By shifting, $L_{A'B'}=L_{OA}$, where $O=[0,0,0]$ and $A=A'-B'$ is a polygon with vertices $[-\phi ,\phi ^3,\phi ^2], [0,2 \phi ^2,2], [0,2 \phi ^2,0], [-\phi ,\phi ^3,1/\phi]$ having area $\volA = \sqrt{2 \left(5+\sqrt{5}\right)}$. Projecting $O$ onto $\mathcal{A}(A)$, we obtain $\operatorname{proj}_A O =[-1-\frac{3}{\sqrt{5}},2+\frac{4}{\sqrt{5}},0]$ and separation $h=\sqrt{10+\frac{22}{\sqrt{5}}}$. Point-Polygon formula yields
\begin{equation}
L_{11t} = \frac{2 h^3}{\volA} \Bigg{(} I^{(1)}_{00}\left(\frac{1}{\phi },\frac{\pi }{5}\right)+I^{(1)}_{00}\left(\frac{1}{2\phi ^2},\frac{\pi }{5}\right)-I^{(1)}_{00}\left(\frac{1}{2 \phi^2},\frac{2 \pi}{5}\right) \Bigg{)} \approx 5.04162416571318.
\end{equation}
Explicitly, after series of simplifications,
\begin{equation}
\begin{split}
    L_{11t} & = \frac{\sqrt{\frac{2}{5}}}{3}-\frac{1}{2}+2 \sqrt{\frac{3}{5}}+\frac{2}{\sqrt{3}}-\frac{7}{6 \sqrt{5}}-\frac{\pi}{45}  \left(29+13\sqrt{5}\right)\\
    & +\frac{2}{15} \left(29+13 \sqrt{5}\right) \left(2 \arccot\sqrt{2}-\arccos\frac{2}{3}\right)+\left(\frac{61}{15}+\frac{9}{\sqrt{5}}\right) \left(\arccosh 4-\arccosh 2\right)\\
    &+\frac{1}{120} \left(133+61 \sqrt{5}\right) \left(\arccosh 3-\arccosh\frac{7}{3}-\ln 3 \right).
\end{split}
\end{equation}

\phantomsection
\subsubsection{\texorpdfstring{L\textsubscript{20e}}{L20e}}
Let $A$ be the face of $K$ with vertices $[1,0,-\phi ^2]$, $[\phi ,\phi ,-\phi]$, $[0,\phi ^2,-1]$, $[-\phi ,\phi,-\phi]$, $[-1,0,-\phi ^2]$ (a regular pentagon) and let $B$ be vertex $[0,\phi^2,1]$, then $L_{20e} = L_{AB}$. Projecting $B$ onto $\mathcal{A}(A)$, we obtain $\operatorname{proj}_A B =[0,\frac{3}{\phi^2\sqrt{5}},-\frac{1}{\sqrt{5}}]$ and separation $h=\sqrt{2+\tfrac{2}{\sqrt{5}}}$. By Point-Polygon formula,
\begin{equation}
\begin{split}
L_{20e} = \frac{2 h^3}{\nu} \Bigg{(} & I^{(1)}_{00}\left(\frac{1}{2},\frac{\pi }{5}\right)-I^{(1)}_{00}\left(\frac{1}{2},\frac{2 \pi}{5}\right)-I^{(1)}_{00}\left(\frac{\phi^2}{2},\frac{\pi }{5}\right) +I^{(1)}_{00}\left(\frac{3\phi}{4},\arctan\frac{\sqrt{5-2 \sqrt{5}}}{3}\right) \\
& + I^{(1)}_{00}\left(\frac{\phi^2}{2},\arctan\sqrt{5 \left(5-2 \sqrt{5}\right)}\right)\Bigg{)} \approx 3.346942678627.
\end{split}
\end{equation}
Explicitly, after series of simplifications,
\begin{equation}
    \begin{split}
    & L_{20e} = \frac{7 \sqrt{\frac{2}{5}}}{3}+\frac{13 \sqrt{2}}{15}-\frac{4}{15}-\frac{4}{3 \sqrt{5}}+\frac{4 \pi }{15 \sqrt{5}}-\left(\frac{1}{3}+\frac{9}{10
   \sqrt{5}}\right) \ln 3-\frac{13 \ln 5}{30 \sqrt{5}}+\frac{8 \arccos\frac{2}{3}}{15\sqrt{5}}\\
   & +\left(\frac{1}{3}+\frac{9}{10 \sqrt{5}}\right) \arccosh\frac{13}{3}-\frac{3}{50} \left(25+8 \sqrt{5}\right) \arccosh\frac{7}{\sqrt{41}}+\frac{3}{50} \left(25+8 \sqrt{5}\right) \arccosh\frac{9}{\sqrt{41}}\\
   & -\frac{16 \arccot 2}{15 \sqrt{5}}+\frac{8 \arctan\frac{9 \sqrt{2}}{17}}{15 \sqrt{5}}-\frac{8 \arctan\frac{5
   \sqrt{2}}{7}}{15 \sqrt{5}}-\frac{8 \arctan\frac{3 \sqrt{10}}{19}}{15 \sqrt{5}}-\frac{8 \arctan\sqrt{10}}{15 \sqrt{5}}.
    \end{split}
\end{equation}

\phantomsection
\subsubsection{\texorpdfstring{L\textsubscript{20r}}{L20r}}
Let $A$ be the same face of $K$ as in the section on $L_{20e}$ and let $B$ be vertex $[0,-\phi^2,1]$, then $L_{20r} = L_{AB}$. Projecting $B$ onto $\mathcal{A}(A)$, we obtain $\operatorname{proj}_A B =[0,\frac{1}{10} \left(\sqrt{5}-5\right),-1-\frac{4}{\sqrt{5}}]$ and separation $h=\sqrt{10+\frac{22}{\sqrt{5}}}$. By Point-Polygon formula,
\begin{equation}
\begin{split}
L_{20r} & = \frac{2 h^3}{\nu} \Bigg{(} I^{(1)}_{00}\left(\frac{1}{\phi },\frac{\pi }{5}\right)+I^{(1)}_{00}\left(\frac{1}{2 \phi ^2},\frac{2 \pi }{5}\right)-I^{(1)}_{00}\left(\frac{1}{2 \phi^2},\frac{\pi }{5}\right)-I^{(1)}_{00}\left(\frac{1}{2 \phi ^4},\frac{2 \pi }{5}\right)\Bigg{)} \\
& \approx 4.87605984948.
\end{split}
\end{equation}
Explicitly, after series of simplifications,
\begin{equation}
    \begin{split}
   L_{20r} & = \frac{4}{15}-\frac{2 \sqrt{2}}{15}+\frac{4}{5 \sqrt{3}}+\frac{2}{3 \sqrt{5}}+\frac{4}{\sqrt{15}}+\frac{16 \pi }{9}+\frac{4 \pi
   }{\sqrt{5}}-\frac{16}{75} \left(20+9 \sqrt{5}\right) \arccot\sqrt{2}
   \\
   &+\left(\frac{3}{5}+\frac{43}{30 \sqrt{5}}\right) \ln 3 +\frac{8}{75} \left(20+9 \sqrt{5}\right) \left(\arccos\frac{2}{3}-\arccos\frac{1}{\sqrt{41}}-\arccos\frac{3}{\sqrt{41}}\right)\\
   & -\frac{2}{75} \left(85+37 \sqrt{5}\right) \arccosh 2+\left(\frac{3}{5}+\frac{43}{30
   \sqrt{5}}\right) \left(\arccosh\frac{7}{3}-\arccosh 3\right)\\
   & +\frac{2}{75} \left(85+37 \sqrt{5}\right) \arccosh 4+\left(\frac{1}{15}+\frac{9}{10 \sqrt{5}}\right) \left(\arccosh \frac{7}{\sqrt{41}} -\arccosh\frac{9}{\sqrt{41}}\right)
    \end{split}
\end{equation}

\phantomsection
\subsubsection{\texorpdfstring{L\textsubscript{20f}}{L20f}}
Let $A$ be the same face of $K$ as in the section on $L_{20e}$ and let $B$ be vertex $[0,-\phi^2,-1]$, then $L_{20f} = L_{AB}$. Projecting $B$ onto $\mathcal{A}(A)$, we obtain $\operatorname{proj}_A B =[0,-\frac{5+3 \sqrt{5}}{10},-2-\frac{3}{\sqrt{5}}]$ and separation $h=2 \sqrt{1+\frac{2}{\sqrt{5}}}$. By Point-Polygon formula,
\begin{equation}
\begin{split}
L_{20f} = \frac{2 h^3}{\nu} \Bigg{(} & I^{(1)}_{00}\left(\frac{\phi ^2}{2},\frac{\pi }{5}\right)-I^{(1)}_{00}\left(\frac{1}{2 \phi^2},\frac{2 \pi }{5}\right)+I^{(1)}_{00}\left(\frac{1}{2 \phi ^2},\arctan\sqrt{5 \left(5+2\sqrt{5}\right)}\right)\\
&-I^{(1)}_{00}\left(\frac{1}{2},\frac{\pi }{5}\right)-I^{(1)}_{00}\left(\frac{\phi^2}{2},\arctan\sqrt{85-38 \sqrt{5}}\right)\Bigg{)} \approx 4.000363965317.
\end{split}
\end{equation}
Explicitly, after series of simplifications,
\begin{equation}
    \begin{split}
   L_{20f} & = 1+\frac{\sqrt{\frac{2}{5}}}{3}-\frac{\sqrt{2}}{5}+\frac{7}{3 \sqrt{5}}+\frac{4 \pi }{15}+\frac{8 \pi }{15 \sqrt{5}}+\frac{2}{3}
   \left(2+\sqrt{5}\right) \ln 3+\frac{13}{300} \left(5+2 \sqrt{5}\right) \ln 5\\
   &-\frac{16}{75} \left(5+2 \sqrt{5}\right) \arccos\frac{2}{3}+\frac{8}{75} \left(5+2 \sqrt{5}\right) \left(\arccot 2+\arctan\frac{5 \sqrt{2}}{7}-\arctan\left(7 \sqrt{2}\right)\right)\\
   &+\left(\frac{47}{30}+\frac{52}{15 \sqrt{5}}\right) \arccosh\frac{7}{3}-\left(\frac{47}{30}+\frac{52}{15 \sqrt{5}}\right) \arccosh 3+\frac{1}{150} \left(35+4 \sqrt{5}\right) \arccosh\frac{13}{3}\\
   & -\frac{13}{150} \left(5+2 \sqrt{5}\right) \arcsinh 2.
    \end{split}
\end{equation}

\phantomsection
\subsubsection{\texorpdfstring{L\textsubscript{22r}}{L22r}}
Finally, let us take a closer look on parallel configurations $L_{21r}$ and $L_{22r}$. We start with the latter. Let $A$ and $B$ be opposite faces of dodecahedron $K$ with separation $h=\sqrt{10+\frac{22}{\sqrt{5}}}$ then $L_{22}=L_{AB}$ with overlap diagram as seen in Figure \ref{fig:OverlapDode}. Note that, due to symmetry, only one tenth of the diagram is sufficient to be considered. The subdomains where $\vol{A \cap \operatorname{proj}B + k}$ can be written as a single polynomial are shown in the diagram. Again, they are labeled by number of sides of polygon of intersection $A \cap (\operatorname{proj}B + k)$, sliding $\operatorname{proj} B+k$ across $\operatorname{proj} A$ by letting $k$ to vary (vector $k$ is shown by a black dot). Let us denote $D$ as the union of the labeled subdomains. Then, by Overlap formula,
\begin{equation}
L_{22r} = \frac{10}{\volA \volB} \int_D \sqrt{h^2 + k^2}\vol{A \cap \operatorname{proj}B + k} \ddd k,
\end{equation}

\begin{figure}[h]
    \centering
     \includegraphics[width=0.6\textwidth]{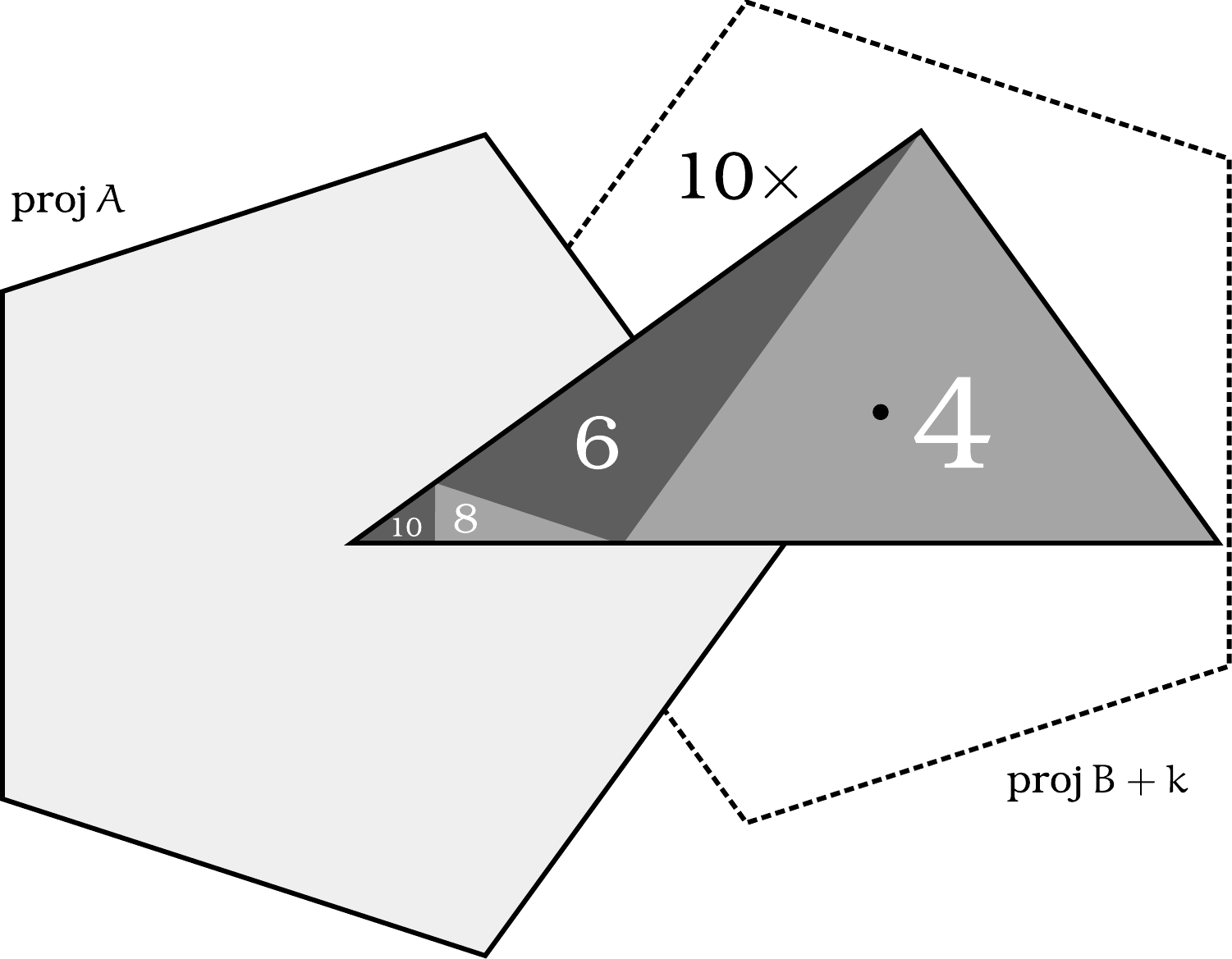}
    \caption{Overlap diagram for opposite-faces configuration in dodecahedron}
    \label{fig:OverlapDode}
\end{figure}

Let us express $\vol{A \cap \operatorname{proj}B + k}$ in the aforementioned subdomains. We denote $v_j = \vol{A \cap \operatorname{proj}B + k}$ for all $k \in D_j$. Let us restrict ourselves to the plane $\mathcal{A}(A)$, in which we put $k=(x,y)$ and in which $\operatorname{proj}A$ is a regular pentagon with vertices $\left[\sqrt{2+\frac{2}{\sqrt{5}}}\cos\frac{2\pi i}{5},\sqrt{2+\frac{2}{\sqrt{5}}}\sin\frac{2\pi i}{5}\right]$, $i\in\{0,1,2,3,4\}$ and area $\volA = \sqrt{5 \left(5+2 \sqrt{5}\right)}$. Similarly, $\operatorname{proj}B$ is another pentagon with vertices $\left[\sqrt{2+\frac{2}{\sqrt{5}}}\cos\frac{2\pi (i+1/2)}{5},\sqrt{2+\frac{2}{\sqrt{5}}}\sin\frac{2\pi (i+1/2)}{5}\right]$ and area $\volB = \volA = \sqrt{5 \left(5+2 \sqrt{5}\right)}$. Under this projection, the labeled subdomains $D_j$ are triangles with vertices
\begin{itemize}
    \item (subdomain $D_4$) $\left[\sqrt{\frac{5}{2}+\frac{11}{2 \sqrt{5}}},\frac{1}{2} \left(1+\sqrt{5}\right)\right], \left[\sqrt{2-\frac{2}{\sqrt{5}}},0\right], \left[\sqrt{8+\frac{8}{\sqrt{5}}},0\right]$, in which
    \begin{equation}
        v_4 =  \frac{1}{10} \left(8 \sqrt{50+22 \sqrt{5}}-4 \left(5+3 \sqrt{5}\right) x+\sqrt{5 \left(5+2 \sqrt{5}\right)} x^2-5 \sqrt{5-2 \sqrt{5}} y^2\right),
    \end{equation}
    \item (subdomain $D_6$) $\left[\sqrt{\frac{5}{2}+\frac{11}{2 \sqrt{5}}},\frac{1}{2} \left(1+\sqrt{5}\right)\right], \left[\sqrt{1-\frac{2}{\sqrt{5}}},\sqrt{5}-2\right], \left[\sqrt{2-\frac{2}{\sqrt{5}}},0\right]$, in which
    \begin{equation}
        \begin{split}
        v_6 & = \frac{1}{20} \bigg{(}8 \sqrt{145+62 \sqrt{5}}-4 \left(5+3 \sqrt{5}\right) x-\sqrt{10 \left(5+\sqrt{5}\right)} x^2\\
        & -4 \sqrt{10\left(5+\sqrt{5}\right)} y+10 \left(1+\sqrt{5}\right) x y-5 \sqrt{2 \left(5+\sqrt{5}\right)} y^2\bigg{)},        
        \end{split}
    \end{equation}
    \item (subdomain $D_8$) $\left[\sqrt{1-\frac{2}{\sqrt{5}}},\sqrt{5}-2\right], \left[\sqrt{1-\frac{2}{\sqrt{5}}},0\right], \left[\sqrt{2-\frac{2}{\sqrt{5}}},0\right]$, in which
    \begin{equation}
        v_8 = \frac{1}{10} \left(4 \sqrt{130+58 \sqrt{5}}-8 \sqrt{5} x-5 \sqrt{1+\frac{2}{\sqrt{5}}} x^2-5 \sqrt{5 \left(5+2 \sqrt{5}\right)} y^2\right),
    \end{equation}
    \item (subdomain $D_{10}$) $\left[\sqrt{1-\frac{2}{\sqrt{5}}},\sqrt{5}-2\right], \left[0,0\right], \left[\sqrt{1-\frac{2}{\sqrt{5}}},0\right]$, in which
    \begin{equation}
        v_{10} = \frac{1}{2} \sqrt{5+2 \sqrt{5}} \left(4-\sqrt{5} x^2-\sqrt{5} y^2\right).
    \end{equation}
\end{itemize}

In order to use the Overlap formula effectively, that is, to integrate $v_j = \vol{A \cap \operatorname{proj}B + k}$, $k \in D_j$ over all subdomains $D_j$, it is convenient to first perform appropriate rotation transformations and inclusion/exclusions. First, by inclusion/exclusion,
\begin{equation}
\begin{split}
L_{22r} & = \frac{10}{\volA \volB} \bigg{(} \int_{D_{10}} u_{10} \sqrt{h^2 + x^2+y^2} \ddd x \dd y+\int_{D_{10}\cup D_{8}} u_8 \sqrt{h^2 + x^2+y^2} \ddd x \dd y\\
& +\int_{D_{10} \cup D_{8} \cup D_{6}} u_6 \sqrt{h^2 + x^2+y^2} \ddd x \dd y+\int_{D_{10} \cup D_{8} \cup D_{6} \cup D_{4}} u_4 \sqrt{h^2 + x^2+y^2} \ddd x \dd y\bigg{)},
\end{split}
\end{equation}
where $u_4 = v_4$, $u_6=v_6-v_4$, $u_8 = v_8 - v_6$ and $u_{10} = v_{10}-v_8$. Explicitly
\begin{align}
    u_4 = & \frac{4 \sqrt{2}}{5} \sqrt{25+11 \sqrt{5}}-\frac{2x}{5} \left(5+3 \sqrt{5}\right) +\frac{x^2}{10} \sqrt{5 \left(5+2
   \sqrt{5}\right)} -\frac{y^2}{2} \sqrt{5-2 \sqrt{5}},\\
    \begin{split}
    u_6 = & -\frac{2}{5} \sqrt{5+2 \sqrt{5}}+x\left(1+\frac{3}{\sqrt{5}}\right)-y \sqrt{2+\frac{2}{\sqrt{5}}}+\frac{x y}{2} \left(1+\sqrt{5}\right)\\
    & -\frac{x^2 \sqrt{2}}{4} \sqrt{5+\frac{11}{\sqrt{5}}}-\frac{y^2 \sqrt{2}}{4} \sqrt{5-\sqrt{5}},     
    \end{split}\\
    u_8 = & -\frac{2}{5} \sqrt{5-2 \sqrt{5}}+x \left(1-\frac{1}{\sqrt{5}}\right)+y \sqrt{2+\frac{2}{\sqrt{5}}}-\frac{x y}{2} \left(1+\sqrt{5}\right)\\
    &-\frac{x^2 \sqrt{2}}{4} \sqrt{1-\frac{1}{\sqrt{5}}}-\frac{y^2 \sqrt{2}}{4} \sqrt{25+11 \sqrt{5}},\\
    \begin{split}
    u_{10} = & -\frac{2}{5}\sqrt{5-2 \sqrt{5}}+\frac{4 x}{\sqrt{5}}-2 x^2 \sqrt{1+\frac{2}{\sqrt{5}}}.
    \end{split}
\end{align}
Note that domain $D_{10}$ is already in the form of the fundamental triangle domain $D\left(\zeta,\frac{\pi}{5}\right)$ with $\zeta = \sqrt{1-\frac{2}{\sqrt{5}}}$. Since $\zeta/h = 1/(2\phi^4)$, we immediately get in terms of auxiliary integrals,
\begin{equation}
\begin{split}
    \int_{D_{10}} u_{10} \sqrt{h^2 + x^2+y^2} \ddd x \dd y = h^3 \Bigg{(} & -\frac{2}{5}\sqrt{5-2 \sqrt{5}}I^{(1)}_{00}\left(\frac{1}{2 \phi ^4},\frac{\pi}{5}\right)+\frac{4 h}{\sqrt{5}}I^{(1)}_{10}\left(\frac{1}{2 \phi ^4},\frac{\pi}{5}\right)\\
    &-2 h^2 \sqrt{1+\frac{2}{\sqrt{5}}}I^{(1)}_{20}\left(\frac{1}{2 \phi ^4},\frac{\pi}{5}\right)\Bigg{)}.  
\end{split}
\end{equation}
Domain $D = D_{10} \cup D_{8} \cup D_{6} \cup D_{4}$ in $(x,y)$ is transformed to the fundamental domain $D(\zeta,\frac{\pi}{5})$ with $\zeta = 2 \sqrt{1+\frac{2}{\sqrt{5}}}$ in $(x',y')$ via polar angle substitution $\varphi = \pi/5 - \varphi'$, that is $x = r \cos(\frac{\pi}{5}-\varphi')$ and $y = r \sin(\frac{\pi}{5}-\varphi')$. Expanding out the trigonometric functions and writing $x' = r \cos \varphi'$ and $y' = r \cos\varphi'$, we get the following transformation relations
\begin{equation}
    x = \frac{1}{4} \left(1+\sqrt{5}\right) x'+\sqrt{\frac{5}{8}-\frac{\sqrt{5}}{8}} y', \qquad y = \sqrt{\frac{5}{8}-\frac{\sqrt{5}}{8}} x'-\frac{1}{4} \left(1+\sqrt{5}\right) y'
\end{equation}
and so
\begin{equation}
   u_4 = \frac{4}{5} \sqrt{50+22 \sqrt{5}}-x' \left(2+\frac{4}{\sqrt{5}}\right)-2 y' \sqrt{1+\frac{2}{\sqrt{5}}}+x' y'+x'^2
   \sqrt{1-\frac{2}{\sqrt{5}}}.
\end{equation}
Since $\zeta/h = 1/\phi$, we immediately get
\begin{equation}
\begin{split}
    & \int_{D} u_4 \sqrt{h^2 + x^2+y^2} \ddd x \dd y = h^3 \Bigg{(} \frac{4}{5} \sqrt{50+22 \sqrt{5}}I^{(1)}_{00}\left(\frac{1}{\phi},\frac{\pi}{5}\right)-h \left(2+\frac{4}{\sqrt{5}}\right)I^{(1)}_{10}\left(\frac{1}{\phi},\frac{\pi}{5}\right)\\
    &-2 h \sqrt{1+\frac{2}{\sqrt{5}}}I^{(1)}_{01}\left(\frac{1}{\phi},\frac{\pi}{5}\right)+h^2I^{(1)}_{11}\left(\frac{1}{\phi},\frac{\pi}{5}\right)+h^2\sqrt{1-\frac{2}{\sqrt{5}}}I^{(1)}_{20}\left(\frac{1}{\phi},\frac{\pi}{5}\right)\Bigg{)}.
\end{split}
\end{equation}
In order to express the remaining integrals, we write $D_{10}\cup D_8\cup D_6 = E_4 \setminus E_6$ and $D_{10}\cup D_8 = E_8 \setminus E_{10}$, where
\begin{itemize}
    \item $E_4$ is a triangle with vertices $\left[\sqrt{\frac{5}{2}+\frac{11}{2 \sqrt{5}}},\frac{1}{2} \left(1+\sqrt{5}\right)\right]$, $[0,0]$, $\left[\frac{1}{2} \sqrt{1+\frac{2}{\sqrt{5}}},-\frac{1}{2}\right]$,
    \item $E_6$ is a triangle with vertices $\left[\sqrt{2-\frac{2}{\sqrt{5}}},0\right]$, $[0,0]$, $\left[\frac{1}{2} \sqrt{1+\frac{2}{\sqrt{5}}},-\frac{1}{2}\right]$,
    \item $E_8$ is a triangle with vertices $\left[\frac{1}{2} \sqrt{\frac{5}{2}-\frac{11}{2 \sqrt{5}}},\frac{1}{4} \left(\sqrt{5}-1\right)\right]$, $[0,0]$, $\left[\frac{1}{2} \sqrt{\frac{5}{2}-\frac{11}{2 \sqrt{5}}},\frac{1}{4} \left(\sqrt{5}-1\right)\right]$,
    \item $E_{10}$ is a triangle with vertices $\left[\frac{1}{2} \sqrt{\frac{5}{2}-\frac{11}{2 \sqrt{5}}},\frac{1}{4} \left(\sqrt{5}-1\right)\right]$, $[0,0]$, $\left[\sqrt{1-\frac{2}{\sqrt{5}}},\sqrt{5}-2\right]$.
\end{itemize}
Note that $E_6 \subset E_4$ and $E_{10} \subset E_8$ and thus
\begin{equation}
\begin{split}
    \int_{D_{10}\cup D_8 \cup D_6} u_6 \sqrt{h^2 + x^2+y^2} \ddd x \dd y & = \int_{E_4} u_6 \sqrt{h^2 + x^2+y^2} \ddd x \dd y - \int_{E_6} u_6 \sqrt{h^2 + x^2+y^2} \ddd x \dd y,\\
    \int_{D_{10} \cup D_8} u_8 \sqrt{h^2 + x^2+y^2} \ddd x \dd y & = \int_{E_8} u_8 \sqrt{h^2 + x^2+y^2} \ddd x \dd y - \int_{E_{10}} u_8 \sqrt{h^2 + x^2+y^2} \ddd x \dd y.\\
\end{split}
\end{equation}
Domains $E_4, E_6, E_8$ and $E_{10}$ can be rotated to fundamental triangle domains after appropriate rotations. First, let $\varphi = \varphi'-\pi/5$, so
\begin{equation}
    x = \frac{1}{4} \left(1+\sqrt{5}\right) x'+\sqrt{\frac{5}{8}-\frac{\sqrt{5}}{8}} y', \qquad y = -\sqrt{\frac{5}{8}-\frac{\sqrt{5}}{8}} x'+\frac{1}{4} \left(1+\sqrt{5}\right) y'
\end{equation}
and thus, after simplifications,
\begin{equation}
    u_6 = -\frac{2}{5} \sqrt{5+2 \sqrt{5}}+2 x' \left(1+\frac{1}{\sqrt{5}}\right)-x'^2 \sqrt{2+\frac{2}{\sqrt{5}}}.
\end{equation}
Suddenly in $(x',y')$, we have $E_4 = D(\zeta,\frac{2\pi}{5})$ and $E_6 = D(\zeta,\frac{\pi}{5})$ with $\zeta = \sqrt{\frac{1}{10} \left(5+\sqrt{5}\right)}$, hence $\zeta/h = 1/(2\phi^2)$ and immediately in terms of auxiliary integrals,
\begin{equation}
\begin{split}
    & \int_{D_{10} \cup D_8 \cup D_6} u_6 \sqrt{h^2 + x^2+y^2} \ddd x \dd y = h^3 \Bigg{(} -\frac{2}{5} \sqrt{5+2 \sqrt{5}}\left(I^{(1)}_{00}\left(\frac{1}{2\phi^2},\frac{2\pi}{5}\right)-I^{(1)}_{00}\left(\frac{1}{2\phi^2},\frac{\pi}{5}\right)\right)\\
    & \!+\!2 h \left(\!1\!+\!\frac{1}{\sqrt{5}}\!\right)\left(\!I^{(1)}_{10}\left(\frac{1}{2\phi^2},\frac{2\pi}{5}\right)\!-\! I^{(1)}_{10}\left(\frac{1}{2\phi^2},\frac{\pi}{5}\right)\right)\!-\! h^2 \sqrt{2\!+\!\frac{2}{\sqrt{5}}}\left(\! I^{(1)}_{20}\left(\frac{1}{2\phi^2},\frac{2\pi}{5}\right)\!-\! I^{(1)}_{20}\left(\frac{1}{2\phi^2},\frac{\pi}{5}\right)\right)\!\Bigg{)}.
\end{split}
\end{equation}
Next, let $\varphi = \frac{2\pi}{5} - \varphi'$, from which we obtain transformation relations
\begin{equation}
    x = \frac{1}{4} \left(\sqrt{5}-1\right) x'+\sqrt{\frac{5}{8}+\frac{\sqrt{5}}{8}} y', \qquad y = \sqrt{\frac{5}{8}+\frac{\sqrt{5}}{8}} x'-\frac{1}{4} \left(\sqrt{5}-1\right) y',
\end{equation}
so
\begin{equation}
u_8 = -\frac{2}{5} \sqrt{5-2 \sqrt{5}}+\frac{4 x'}{\sqrt{5}}-2 x'^2 \sqrt{1+\frac{2}{\sqrt{5}}}.
\end{equation}
In $(x',y')$, we have $E_8 = D(\zeta,\frac{2\pi}{5})$ and $E_{10} = D(\zeta,\frac{\pi}{5})$ with $\zeta = \sqrt{1-\frac{2}{\sqrt{5}}}$, hence $\zeta/h = 1/(2\phi^4)$ and immediately in terms of auxiliary integrals,
\begin{equation}
\begin{split}
    & \int_{D_{10} \cup D_8} u_8 \sqrt{h^2 + x^2+y^2} \ddd x \dd y = h^3 \Bigg{(} -\frac{2}{5} \sqrt{5-2 \sqrt{5}}\left(I^{(1)}_{00}\left(\frac{1}{2\phi^4},\frac{2\pi}{5}\right)-I^{(1)}_{00}\left(\frac{1}{2\phi^4},\frac{\pi}{5}\right)\right)\\
    & +\frac{4 h}{\sqrt{5}}\left(I^{(1)}_{10}\left(\frac{1}{2\phi^4},\frac{2\pi}{5}\right)- I^{(1)}_{10}\left(\frac{1}{2\phi^4},\frac{\pi}{5}\right)\right)- 2h^2 \sqrt{1+\frac{2}{\sqrt{5}}}\left( I^{(1)}_{20}\left(\frac{1}{2\phi^4},\frac{2\pi}{5}\right)- I^{(1)}_{20}\left(\frac{1}{2\phi^2},\frac{\pi}{5}\right)\right)\Bigg{)}.
\end{split}
\end{equation}
Therefore, in total,
\begin{align*}
    & L_{22r} = \frac{10 h^3}{\volA \volB} \Bigg{(} \frac{2}{5} \sqrt{5+2 \sqrt{5}} I^{(1)}_{00}\left(\frac{1}{2 \phi ^2},\frac{\pi }{5}\right)-\frac{2}{5} \sqrt{5+2 \sqrt{5}}I^{(1)}_{00}\left(\frac{1}{2 \phi ^2},\frac{2 \pi }{5}\right)\\
    & -\frac{2}{5} \sqrt{5-2 \sqrt{5}} I^{(1)}_{00}\left(\frac{1}{2 \phi ^4},\frac{2 \pi}{5}\right)+\frac{4}{5} \sqrt{50+22 \sqrt{5}} I^{(1)}_{00}\left(\frac{1}{\phi },\frac{\pi }{5}\right) -2 \sqrt{\frac{1}{5} \left(5+2 \sqrt{5}\right)} h I^{(1)}_{01}\left(\frac{1}{\phi },\frac{\pi }{5}\right)\\
    &+\frac{4 h I^{(1)}_{10}\left(\frac{1}{2\phi ^4},\frac{2 \pi }{5}\right)}{\sqrt{5}}-\frac{2 h I^{(1)}_{10}\left(\frac{1}{2 \phi ^2},\frac{\pi }{5}\right)}{\sqrt{5}}-2 h I^{(1)}_{10}\left(\frac{1}{2 \phi ^2},\frac{\pi }{5}\right)+\frac{2 h I^{(1)}_{10}\left(\frac{1}{2 \phi ^2},\frac{2 \pi }{5}\right)}{\sqrt{5}}+2h I^{(1)}_{10}\left(\frac{1}{2 \phi ^2},\frac{2 \pi }{5}\right)\\
    & -\frac{4 h I^{(1)}_{10}\left(\frac{1}{\phi },\frac{\pi }{5}\right)}{\sqrt{5}}-2 h I^{(1)}_{10}\left(\frac{1}{\phi },\frac{\pi }{5}\right) + h^2 I^{(1)}_{11}\left(\frac{1}{\phi },\frac{\pi }{5}\right)-2 \sqrt{1+\frac{2}{\sqrt{5}}} h^2 I^{(1)}_{20}\left(\frac{1}{2 \phi ^4},\frac{2 \pi}{5}\right)\\
    &+\sqrt{2+\frac{2}{\sqrt{5}}} h^2 I^{(1)}_{20}\left(\frac{1}{2 \phi ^2},\frac{\pi }{5}\right)-\sqrt{2+\frac{2}{\sqrt{5}}} h^2 I^{(1)}_{20}\left(\frac{1}{2 \phi ^2},\frac{2 \pi }{5}\right)+\sqrt{1-\frac{2}{\sqrt{5}}} h^2 I^{(1)}_{20}\left(\frac{1}{\phi },\frac{\pi}{5}\right)\Bigg{)}\\
    & \approx 4.69357209587.
\end{align*}
Or explicitly, after a lot of simplifications,
\begin{equation}
\begin{split}
    & L_{22r} = \frac{2 \sqrt{\frac{2}{5}}}{15}-\frac{38}{75}-\frac{4 \sqrt{2}}{75}+\frac{44}{25 \sqrt{3}}-\frac{88}{75 \sqrt{5}}+\frac{116}{25\sqrt{15}}-\frac{8 \left(1839+820 \sqrt{5}\right) \pi }{1125}\\
    &+\frac{16}{125} \left(67+30 \sqrt{5}\right) \arccos\frac{2}{3}+\frac{16}{375} \left(388+173 \sqrt{5}\right) \left(\arccos\frac{1}{\sqrt{41}}+\arccos\frac{3}{\sqrt{41}}\right)\\
    &+\frac{2}{375} \left(817+371 \sqrt{5}\right) \left(\arccosh 2-\arccosh 4\right)+\frac{1}{250} \left(1833+820 \sqrt{5}\right) \left(\arccosh\frac{7}{3}-\arccosh 3\right)\\
    &+\frac{1}{750}\left(3538+1523 \sqrt{5}\right) \left(\arccosh\frac{9}{\sqrt{41}}-\arccosh\frac{7}{\sqrt{41}}\right)-\frac{32}{125} \left(67+30 \sqrt{5}\right) \arccot\sqrt{2}\\
   &+\frac{1}{250}\left(1833+820 \sqrt{5}\right) \ln 3.
\end{split}
\end{equation}

\phantomsection
\subsubsection{\texorpdfstring{L\textsubscript{21r}}{L21r}}
By definition, $L_{21r} = L_{AB}$, where $A$ is a face of $K$ and $B$ is the perimeter of its corresponding opposite face. Again, we use the Overlap formula to deduce the value of $L_{21r}$, that is, by symmetry,
\begin{equation}
L_{21r} = \frac{10}{\volA \volB} \int_D \sqrt{h^2 + k^2}\vol{A \cap \operatorname{proj}B + k} \ddd k,
\end{equation}
where $\volA=\sqrt{5 \left(5+2 \sqrt{5}\right)}$ is the area of $A$ and $\volB=10$ is the length of $B$. The overlap diagram is the same as in the case of $L_{22r}$, although the value $\vol{A \cap \operatorname{proj}B + k}$ now corresponds to the total length of polyline $A \cap (\operatorname{proj}B + k)$ of intersection. In order to keep the naming of the subdomains $D_j$ and functions $v_j = \vol{A \cap \operatorname{proj}B + k}$, $k \in D_j$ the same as in the case of $L_{22r}$, we let $j$, exceptionally, to denote \emph{twice} the number line segments of $A \cap (\operatorname{proj}B + k)$ in this section. That way, we get $D = D_{10} \cup D_8 \cup D_6 \cup D_4$ and
\begin{align*}
    v_4 & =  4+\frac{4}{\sqrt{5}}-x \sqrt{2+\frac{2}{\sqrt{5}}}, && \,\, v_6 =  4+\frac{2}{\sqrt{5}}-\frac{x}{2} \sqrt{2+\frac{2}{\sqrt{5}}}+\frac{y}{2} \left(1-\sqrt{5}\right),\\
    v_8 & = 2+\frac{6}{\sqrt{5}}-2 x \sqrt{1-\frac{2}{\sqrt{5}}}, &&
    v_{10} = 2 \sqrt{5}.
\end{align*}
Let $u_4 = v_4$, $u_6 = v_6-v_4$, $u_8 = v_8 - v_6$, $u_{10} = v_{10} - v_8$, that is
\begin{align*}
    u_4 & =  4+\frac{4}{\sqrt{5}}-x \sqrt{2+\frac{2}{\sqrt{5}}}, && \,\, u_6 =  -\frac{2}{\sqrt{5}}+\frac{x}{2} \sqrt{2+\frac{2}{\sqrt{5}}}+\frac{y}{2} \left(1-\sqrt{5}\right),\\
    u_8 & = -2+\frac{4}{\sqrt{5}}+x \sqrt{\frac{5}{2}-\frac{11}{2 \sqrt{5}}}-\frac{y}{2} \left(1-\sqrt{5}\right), &&
    u_{10} = -2+\frac{4}{\sqrt{5}}+2 \sqrt{1-\frac{2}{\sqrt{5}}} x.
\end{align*}
Overall, by inclusion/exclusion,
\begin{equation}
\begin{split}
L_{21r} = \frac{10}{\volA \volB} \bigg{(} & \int_{D_{10}} u_{10} \sqrt{h^2 + x^2+y^2} \ddd x \dd y+\int_{D} u_4 \sqrt{h^2 + x^2+y^2} \ddd x \dd y\\
 +&\int_{E_4} u_6 \sqrt{h^2 + x^2+y^2} \ddd x \dd y-\int_{E_6} u_6 \sqrt{h^2 + x^2+y^2} \ddd x \dd y\\
+&\int_{E_8} u_8 \sqrt{h^2 + x^2+y^2} \ddd x \dd y - \int_{E_{10}} u_8 \sqrt{h^2 + x^2+y^2} \ddd x \dd y\bigg{)},\end{split}
\end{equation}
The first integral can be immediately expressed in terms of auxiliary integrals
\begin{equation}
    \int_{D_{10}} u_{10} \sqrt{h^2+x^2+y^2} \ddd x \dd y = h^3 \left(\left(-2+\frac{4}{\sqrt{5}}\right) I^{(1)}_{00}\left(\frac{1}{2 \phi ^4},\frac{\pi }{5}\right)+2 h \sqrt{1-\frac{2}{\sqrt{5}}} I^{(1)}_{10}\left(\frac{1}{2 \phi ^4},\frac{\pi }{5}\right)\right).
\end{equation}
Performing the same set of transformations as in the previous case of $L_{22r}$, that is
\begin{itemize}
    \item $\varphi = \pi/5 - \varphi'$, we get $u_{4} = 4+\frac{4}{\sqrt{5}}-x' \sqrt{1+\frac{2}{\sqrt{5}}}-y'$,
    \item $\varphi = \varphi'-\pi/5$, we get $u_{6} = -\frac{2}{\sqrt{5}}+x' \sqrt{2-\frac{2}{\sqrt{5}}}$,
    \item $\varphi = \frac{2\pi}{5} - \varphi'$, we get $u_{8} = -2+\frac{4}{\sqrt{5}}+2 x' \sqrt{1-\frac{2}{\sqrt{5}}}$
\end{itemize}
and as a result, since all the subdomains are now expressed as fundamental triangle domains, we get
\begin{align*}
   & \int_{D} u_4 \sqrt{h^2\! +\! x^2\!+\!y^2} \ddd x \dd y = h^3 \Bigg{(}\! \left(4\!+\!\frac{4}{\sqrt{5}}\right)I^{(1)}_{00}\left(\frac{1}{\phi},\frac{\pi}{5}\right)\!-\! h \sqrt{1\!+\!\frac{2}{\sqrt{5}}} I^{(1)}_{10}\left(\frac{1}{\phi},\frac{\pi}{5}\right)\! -\! h I^{(1)}_{01}\left(\frac{1}{\phi},\frac{\pi}{5}\right)\!\Bigg{)},\\
\begin{split}
    &  \int_{D_{10} \cup D_8 \cup D_6} u_6 \sqrt{h^2 \!+\! x^2\!+\!y^2} \ddd x \dd y = h^3 \Bigg{(} -\frac{2}{\sqrt{5}}\left(I^{(1)}_{00}\left(\frac{1}{2\phi^2},\frac{2\pi}{5}\right)-I^{(1)}_{00}\left(\frac{1}{2\phi^2},\frac{\pi}{5}\right)\right)\\
    & \hspace{14em} +h \sqrt{2-\frac{2}{\sqrt{5}}} \left(1+\frac{1}{\sqrt{5}}\right)\left(I^{(1)}_{10}\left(\frac{1}{2\phi^2},\frac{2\pi}{5}\right)- I^{(1)}_{10}\left(\frac{1}{2\phi^2},\frac{\pi}{5}\right)\right)\Bigg{)},
\end{split}\\
\begin{split}
    & \int_{D_{10} \cup D_8} u_8 \sqrt{h^2 \!+\! x^2\!+\!y^2} \ddd x \dd y = h^3 \Bigg{(} \left(\frac{4}{\sqrt{5}}-2\right)\left(I^{(1)}_{00}\left(\frac{1}{2\phi^4},\frac{2\pi}{5}\right)-I^{(1)}_{00}\left(\frac{1}{2\phi^4},\frac{\pi}{5}\right)\right)\\
    & \hspace{12.6em} +2 h \sqrt{1-\frac{2}{\sqrt{5}}}\left(I^{(1)}_{10}\left(\frac{1}{2\phi^4},\frac{2\pi}{5}\right)- I^{(1)}_{10}\left(\frac{1}{2\phi^4},\frac{\pi}{5}\right)\right)\Bigg{)}.
\end{split}
\end{align*}
Therefore, in total
Therefore, in total,
\begin{align*}
    L_{21r} & = \frac{10 h^3}{\volA \volB} \Bigg{(} \left(\frac{4}{\sqrt{5}}-2\right) I^{(1)}_{00}\left(\frac{1}{2 \phi ^4},\frac{2 \pi }{5}\right)+\frac{2 I^{(1)}_{00}\left(\frac{1}{2 \phi^2},\frac{\pi }{5}\right)}{\sqrt{5}}-\frac{2 I^{(1)}_{00}\left(\frac{1}{2 \phi ^2},\frac{2 \pi }{5}\right)}{\sqrt{5}}\\
   &+\frac{4}{5}\left(5+\sqrt{5}\right) I^{(1)}_{00}\left(\frac{1}{\phi },\frac{\pi }{5}\right)-h I^{(1)}_{01}\left(\frac{1}{\phi },\frac{\pi }{5}\right)+2 \sqrt{1-\frac{2}{\sqrt{5}}} h I^{(1)}_{10}\left(\frac{1}{2 \phi ^4},\frac{2 \pi}{5}\right)\\
   &-\sqrt{2-\frac{2}{\sqrt{5}}} h I^{(1)}_{10}\left(\frac{1}{2 \phi ^2},\frac{\pi }{5}\right)+\sqrt{2-\frac{2}{\sqrt{5}}} hI^{(1)}_{10}\left(\frac{1}{2 \phi ^2},\frac{2 \pi }{5}\right)-\sqrt{1+\frac{2}{\sqrt{5}}} h I^{(1)}_{10}\left(\frac{1}{\phi },\frac{\pi
   }{5}\right)\Bigg{)}\\
    & \approx 4.808558828667.
\end{align*}
Or explicitly,
\begin{equation}
\begin{split}
    & L_{21r} = \frac{149}{30}-\frac{29 \sqrt{\frac{3}{5}}}{5}-\frac{\sqrt{2}}{15}-\frac{41}{5 \sqrt{3}}+\frac{166}{15 \sqrt{5}}+\frac{1}{3 \sqrt{10}}-\frac{4\pi}{225} \left(19+8 \sqrt{5}\right) -\frac{8}{75} \left(9+4 \sqrt{5}\right) \arccos\frac{2}{3}\\
   & -\frac{8}{75} \left(2+\sqrt{5}\right) \left(\arccos\frac{1}{\sqrt{41}} +\arccos\frac{3}{\sqrt{41}}\right)+\frac{1043+468 \sqrt{5}}{600} \left(\arccosh\frac{7}{3}-\arccosh 3\right)\\
   &+\frac{271+117 \sqrt{5}}{150} \left(\arccosh 4-\arccosh 2 \right)+\frac{746+283\sqrt{5}}{600} \left(\arccosh\frac{9}{\sqrt{41}}-\arccosh\frac{7}{\sqrt{41}}\right)\\
   &+\frac{16}{75} \left(9+4\sqrt{5}\right) \arccot\sqrt{2}+\frac{1}{600} \left(1043+468 \sqrt{5}\right) \ln 3.
\end{split}
\end{equation}

\phantomsection
\subsubsection{\texorpdfstring{L\textsubscript{33}}{L33}}
Putting everything together by using \eqref{Eq:Dodeca}, we finally arrive, after another series of simplifications and inverse trigonometric and hyperbolic identities, at
\begin{align*}
    L_{33} & = \frac{1516}{1575}+\frac{2 \sqrt{\frac{2}{5}}}{45}-\frac{124 \sqrt{\frac{3}{5}}}{175}-\frac{71 \sqrt{2}}{1575}-\frac{12 \sqrt{3}}{35}+\frac{342}{175 \sqrt{5}}+\frac{493 \pi }{23625}+\frac{67 \pi }{945 \sqrt{5}}\\
    &+\frac{\left(397-244 \sqrt{5}\right) \arccot 2}{18900}+\frac{\left(24023+11788 \sqrt{5}\right) \left(\arccos\frac{2}{3}-\arccos\frac{1}{3}\right)}{94500}\\
    &-\frac{\left(461+212 \sqrt{5}\right) \left(\arccos\frac{23}{41}+\arccos\frac{39}{41}\right)}{1000}-\frac{\left(1031+521 \sqrt{5}\right) \arccosh\frac{13}{3}}{75600}\\
    &+\frac{\left(367+163 \sqrt{5}\right) \arccosh 9 }{16800}+\frac{\left(22197+8149 \sqrt{5}\right) \left(\arccosh\frac{121}{41}-\arccosh\frac{57}{41}\right)}{84000}\\
    &+\frac{\left(15763+7063 \sqrt{5}\right) \left(\arccosh\frac{7}{3}-\arccosh 3 \right)}{21000}+\frac{2}{875} \left(423+187 \sqrt{5}\right) \left(\arccosh 4 -\arccosh 2 \right)\\
    &+\frac{\left(288889+129739 \sqrt{5}\right) \ln 3}{378000}+\frac{\left(109-3143 \sqrt{5}\right) \ln 5}{151200} \approx 2.533488631644.
\end{align*}
Rescaling, we get our mean distance in a regular dodecahedron having unit volume
\begin{equation}
    L_{33}\big{|}_{\volK = 1} = \frac{L_{33}}{\sqrt[3]{30+14\sqrt{5}}} \approx 0.65853073.
\end{equation}

\section{Further remarks}
\subsection{Weights}
We believe that the equation for weights \eqref{EqWeights} possesses a closed form solution in terms of geometrical properties of convex non-parallel polyhedra. However, we were unable to deduce that.

\subsection{General convex polyhedra}
Let $K \in \mathcal{\mathbb{R}}^d$, then for any fixed $p>-d$, $L^{(p)}_{KK}$ is \emph{continuous} with respect to continuous transformations of $K$. Hence, in principle, we could obtain the formula for convex parallel polyhedra by a continuous limit from some convex non-parallel polyhedron. However, were not able to perform this limit.

\subsection{Bounds on moments}
Also, we believe, since the value $p=1$ is not special, there could be a bound on $L_{KK}^{(p)}$ similar that of Bonnet, Gusakova, Th\"{a}le and Zaporozhets \cite{bonnet2021sharp}.


\printbibliography[heading=bibintoc]

\clearpage

\appendix
\section{Exact mean distances in regular polyhedra}

\subsection*{Mean distances in solids of unit volume}
The table below summarises all new results of exact mean distance in various polyhedra. For completeness, the previously known cases of a ball and a cube have been added as well. Each solid $K$ has $\volK = 1$. As usual, $\phi = (1+\sqrt{5})/2$ is the Golden ratio.


\bgroup
\renewcommand{\ru}[1]{\rule{0pt}{#1 em}}
\newcommand{\tup}{\ru{3}} 
\renewcommand{\tupsingle}{\ru{1.8}} 
\newcommand{\tdown}{3} 
\newcommand{\tdownsingle}{0.5} 
\renewcommand{\arraystretch}{1.0}
   \begin{table}[h]
\setlength{\tabcolsep}{2pt} 
\setlength\jot{-6pt} 
\begin{minipage}{\textwidth} 
\setlength{\extrarowheight}{2pt} 
\begin{tabular}{|c|c|}
 \hline
\tupsingle \begin{tabular}{c} \textit{ball}\footnote{trivial} \\ $0.63807479$ \end{tabular} & $\displaystyle\frac{18}{35}\sqrt[3]{\frac{6}{\pi }}$ \\[\tdownsingle em]
\hline
\ru{4.5} \begin{tabular}{c} icosahedron \\ $0.64131249$ \end{tabular} & $\begin{aligned}
  & \frac{1}{2}\sqrt[3]{\frac{9}{5}-\frac{3}{\sqrt{5}}} \bigg{(}\frac{197}{525}+\frac{239}{525 \sqrt{5}}-\frac{44}{525} \sqrt{2+\frac{2}{\sqrt{5}}}-\frac{\left(17226+6269 \sqrt{5}\right) \pi }{157500}\\[1ex]
  & -\frac{\left(2186+1413 \sqrt{5}\right) \arccot \phi}{15750}+\frac{\left(82-75 \sqrt{5}\right)\arccot\left(\phi ^2\right)}{5250}+\frac{4 \left(2139+881 \sqrt{5}\right) \arccsch\phi}{7875}\\[1ex]
    & +\frac{\left(15969+7151 \sqrt{5}\right) \arccoth \phi}{12600}+\frac{\left(4449-1685 \sqrt{5}\right) \ln 3}{42000}-\frac{\left(75783+37789 \sqrt{5}\right) \ln 5}{252000}\bigg{)}
    \end{aligned}$ \\[\tdown em]
    \hline
\ru{8.1} \begin{tabular}{c} dodecahedron \\ $0.64252068$ \end{tabular} & $\begin{aligned}
    & \frac{1}{\sqrt[3]{30+14 \sqrt{5}}}\bigg{(}\frac{1516}{1575}+\frac{2 \sqrt{\frac{2}{5}}}{45}-\frac{124 \sqrt{\frac{3}{5}}}{175}-\frac{71 \sqrt{2}}{1575}-\frac{12 \sqrt{3}}{35}+\frac{342}{175 \sqrt{5}}+\frac{493 \pi
   }{23625}\\[1ex]
    & +\frac{67 \pi }{945 \sqrt{5}}+\frac{\left(397-244 \sqrt{5}\right) \arccot 2}{18900}+\frac{\left(24023+11788 \sqrt{5}\right) \left(\arccos\frac{2}{3}-\arccos\frac{1}{3}\right)}{94500}  \\[1ex]
    & -\frac{\left(461+212 \sqrt{5}\right) \left(\arccos\frac{23}{41}+\arccos\frac{39}{41}\right)}{1000}-\frac{\left(1031+521 \sqrt{5}\right) \arccosh\frac{13}{3}}{75600} \\[1ex]
    & +\frac{\left(367+163 \sqrt{5}\right) \arccosh 9}{16800} +\frac{\left(22197+8149 \sqrt{5}\right) \left(\arccosh\frac{121}{41}-\arccosh\frac{57}{41}\right)}{84000} \\[1ex]
    & +\frac{\left(15763+7063
   \sqrt{5}\right) \left(\arccosh\frac{7}{3}-\arccosh 3\right)}{21000} +\frac{\left(288889+129739 \sqrt{5}\right) \ln 3}{378000} \\[1ex]
   & +\frac{2\left(423+187 \sqrt{5}\right) \left(\arccosh 4-\arccosh 2\right)}{875}+\frac{\left(109-3143 \sqrt{5}\right) \ln 5}{151200}\bigg{)}
\end{aligned}$ \\[\tdown em]
 \hline
\tupsingle \begin{tabular}{c} octahedron \\ $0.65853073$ \end{tabular} & $\displaystyle\sqrt[3]{\frac{3}{4}}\left(\frac{4}{105}\!+\!\frac{13 \sqrt{2}}{105}\!-\!\frac{4 \pi }{45}\!+\!\frac{109 \ln 3}{630 \sqrt{2}}\!+\!\frac{16\arccot\sqrt{2}}{315}\!+\!\frac{158\arccoth\sqrt{2}}{315}\sqrt{2} \right)$ \\[\tdownsingle em]
     \hline
\tupsingle  \begin{tabular}{c} \textit{cube}\footfullcite{robbinsCuLi} \\ $0.66170718$ \end{tabular} & $\displaystyle\frac{4}{105}+\frac{17 \sqrt{2}}{105}-\frac{2 \sqrt{3}}{35}-\frac{\pi }{15}+\frac{1}{5} \arccoth\sqrt{2}+\frac{4}{5} \arccoth\sqrt{3}$ \\[\tdownsingle em]
         \hline
\tupsingle  \begin{tabular}{c} tetrahedron \\ $0.72946242$ \end{tabular} & $\displaystyle\sqrt[3]{3} \left(\frac{\sqrt{2}}{7}-\frac{37 \pi }{315}+\frac{4}{15} \arctan \sqrt{2}+\frac{113 \ln 3}{210 \sqrt{2}}\right)$ \\[\tdownsingle em]
    \hline
    \end{tabular} 
\end{minipage}
\caption{Mean distance in various solids of unit volume, $\phi = (1+\sqrt{5})/2$ is the Golden ratio.}
\label{tab:meanlenall}
    \end{table}
\egroup

\subsection*{Normalised mean distance}
We could select normalisation in which $V_1(K) = 1$ rather than $\volK =1$. In order to express the normalised mean distance $\Gamma_{KK}$, we just rescale our values in Table $\ref{tab:meanlenall}$ by $\sqrt[3]{\volK}/V_1(K)$. Both $\vol K$ and $V_1(K)$ can be expressed easily. The following Table \ref{tab:VoluPlaton} shows the volume of the regular polyhedra with edge length equal to $l$.
\begin{table}[h]
    \centering
    \begin{tabular}{|c|c|c|c|c|c|}
    \hline
        $K$ & tetrahedron & cube & octahedron & dodecahedron & icosahedron\\
    \hline
       \ru{2} $\displaystyle\frac{\volK}{l^3}$ & $\displaystyle\frac{\sqrt{2}}{12}$ & $1$ & $\displaystyle\frac{\sqrt{2}}{3}$ & $\displaystyle\frac{15+7\sqrt{5}}{4}$ & $\displaystyle\frac{5\left(3+\sqrt{5}\right)}{12}$\\[2ex]
    \hline
    \end{tabular}
    \caption{First intrinsic volume of Platonic solids with unit edge length}
    \label{tab:VoluPlaton}
\end{table}
To express $V_1(K)$, we use the formula $V_1(K) = \frac{1}{2\pi} \sum_i l_i (\pi-\delta_i)$, where the sum is carried over all edges $E_i$ of $K$ having length $l_i$ and dihedral angle $\delta_i$. The following table shows the value of $V_1(K)$ for the five regular polyhedra (Platonic solids) with common edge length $l_i = l$ for all $i$.

\begin{table}[h]
    \centering
    \begin{tabular}{|c|c|c|c|c|c|}
    \hline
        $K$ & tetrahedron & cube & octahedron & dodecahedron & icosahedron\\
    \hline
       \ru{2} $\displaystyle\frac{V_1(K)}{l}$ & $\displaystyle\frac{3\arccos\left(-\frac13\right)}{\pi}$ & $6$ & $\displaystyle\frac{6\arccos\frac13}{\pi}$ & $\displaystyle\frac{15\arctan 2}{\pi}$ & $\displaystyle\frac{15\arcsin\frac23}{\pi}$\\[2ex]
    \hline
    \end{tabular}
    \caption{First intrinsic volume of Platonic solids with unit edge length}
    \label{tab:IntrinPlaton}
\end{table}

When $K$ is a ball, $\sqrt[3]{\volK}/V_1(K) = \frac{1}{6}\sqrt[3]{\frac{\pi}{6}}$ trivially. Finally, performing the scaling, in Table \ref{tab:NormalisedDist} we show numerical values of $\Gamma_{KK}$ for the same solids $K$ as in Table \ref{tab:meanlenall}. The lower and the upper bound of $\Gamma_{KK}$ for $K$ convex compact (based on \cite{bonnet2021sharp}) are set to $5/28$ and $1/3$, respectively.

\begin{table}[h]
    \centering
    \begin{tabular}{|c|c|c|c|c|}
    \hline
        $K$ & \begin{tabular}{c}
             lower  \\
             bound 
        \end{tabular} & tetrahedron & octahedron & cube\\
    \hline
       \ru{2} $\Gamma_{KK}$ & $0.17857143$ & $0.19601928$ & $0.21800285$ & $0.22056906$\\[2ex]
    \hline
    \multicolumn{5}{c}{}\\
    \hline
        $K$ & icosahedron & dodecahedron & ball & \begin{tabular}{c}
             upper  \\
             bound 
        \end{tabular}\\
    \hline
       \ru{2} $\Gamma_{KK}$ & $0.23872552$ & $0.23963024$ & $0.25714286$ & $0.33333333$\\[2ex]
    \hline
    \end{tabular}
    \caption{Normalised mean distance in Platonic solids with unit first intrinsic volume}
    \label{tab:NormalisedDist}
\end{table}


\section{Auxiliary integrals}
\subsection*{Reccurrence relations for auxiliary integrals}
Recall that $D(\zeta,\gamma)$ is the \emph{fundamental triangle domain} with vertices $[0,0]$, $[\zeta,0]$, $[\zeta,\zeta\tan\gamma]$ ($\zeta > 0$, $0<\gamma<\pi/2$). To express the integrals
\begin{equation}
I^{(p)}_{ij}(q,\gamma) =\int_{D(q,\gamma)} x^i y^j \left(1 + x^2 + y^2\right)^{p/2} \ddd x \dd y,
\end{equation}
we mainly employ recursive relations. However, $I^{(p)}_{11}(q,\gamma)$ can be expressed directly without recursions. We parametrise the domain $D(q,\gamma)$ as $y \in (0,x \tan \gamma)$, $x \in (0,q)$, by integrating out $y$ and then $x$, we get
\begin{equation}
I^{(p)}_{11}(q,\gamma) = \frac{\sin^2\gamma +\cos^2\gamma \left(1+q^2 \sec^2\gamma \right)^{2+\frac{p}{2}}-\left(1+q^2\right)^{2+\frac{p}{2}}}{(2+p) (4+p)}.
\end{equation}

\subsubsection*{K's}
In case of $I^{(p)}_{10}(q,\gamma)$ and  $I^{(p)}_{10}(q,\gamma)$, we cannot integrate twice. To overcome this, we first define our first auxiliary integral
\begin{equation}
K^{(p)}(r)= \int_0^r \left(1+t^2\right)^{1+p/2} \ddd t
\end{equation}
satisfying symmetry
\begin{equation}
\qquad K^{(p)}(-r) = -K^{(p)}(r)
\end{equation}
and, via integration by parts, the recurrence relation
\begin{equation}
K^{(p)}(r) = \frac{2+p}{3+p} K^{(p-2)}(r)+ \frac{r}{3+p}(1+r^2)^{1+p/2}
\end{equation}
with boundary conditions
\begin{equation}
K^{(-2)}(r) = r,\quad
K^{(-3)}(r) = \arcsinh r.
\end{equation}
We can then express our $I^{(p)}_{10}(q,\gamma)$ and $I^{(p)}_{10}(q,\gamma)$ in terms of $K's$ as
\begin{align}
    I_{10}^{(p)}(q,\gamma)& =\frac{1}{2+p}\left[\left(1+q^2\right)^{\frac{3+p}{2}} K^{(p)}\left(\frac{q \tan (\gamma
   )}{\sqrt{1+q^2}}\right)-\sin\gamma \, K^{(p)}(q \sec\gamma)\right], \\
   I_{01}^{(p)}(q,\gamma) & =\frac{1}{2+p}\left[ \cos \gamma K^{(p)}(q \sec\gamma)-K^{(p)}(q)\right].
\end{align}

\subsubsection*{J's}
We denote
\begin{equation}
J^{(p)}(q,\gamma)=-\gamma + \int_0^\gamma (1+q^2 \sec^2\varphi)^{1+p/2}\ddd \varphi,
\end{equation}
satisfying symmetry
\begin{equation}
    J^{(p)}(q,-\gamma) = -J^{(p)}(q,\gamma)
\end{equation}
and, via integration by parts, the recurrence relations
\begin{equation}
J^{(p)}(q,\gamma) = J^{(p-2)}(q,\gamma) + q\, (1+q)^{\frac{1+p}{2}}K^{(p-2)}\left(\frac{q \tan \gamma}{\sqrt{1+q^2}}\right),
\end{equation}
with boundary conditions
\begin{equation}
J^{(-2)}(q,\gamma) = 0, \quad
J^{(-3)}(q,\gamma) = -\gamma + \arcsin\frac{\sin\gamma}{\sqrt{1+q^2}}.
\end{equation}
\begin{remark}
Note that we can write $J^{(p)}(q,\gamma)=\int_0^\gamma \left( (1+q^2 \sec^2\varphi)^{1+p/2} - 1\right)\ddd \varphi$.
\end{remark}
We transform $I_{00}^{(p)}(q,\gamma)$ by substitution into polar coordinates $x = r \cos\varphi$, $y = r \sin \varphi$, our domain $D(\zeta,\gamma)$ becomes parametrised as $r \in (0,q \sec\varphi)$, $\varphi\in (0,\gamma)$ and thus
\begin{equation}
I^{(p)}_{00}(q,\gamma) = \int_0^\gamma \int_0^{q \sec\varphi} r \left(1 + r^2\right)^{p/2} \ddd r \dd \varphi,
\end{equation}
Integrating out $r$, we get
\begin{equation}
I_{00}^{(p)}(q,\gamma) = \frac{1}{2+p} \int_{0}^\gamma (1+q^2\sec^2\varphi)^{1+p/2} 
 - 1\ddd\varphi = \frac{1}{2+p}J^{(p)}(q,\gamma).
\end{equation}
Note that, by this integral formula, we can extend the definition of $I^{(p)}_{00}(q,\gamma)$ for negative $\gamma$ as well.

\subsubsection*{M's}
The last set of auxiliary integrals we define is
\begin{equation}
M^{(p)}(q,\gamma)=\int_0^\gamma \cos^2\varphi\,\left[ (1+q^2 \sec^2\varphi)^{1+p/2} - 1\right]\ddd \varphi,
\end{equation}
satisfying the recurrence relation
\begin{equation}
M^{(p)}(q,\gamma) = M^{(p-2)}(q,\gamma) + q^2\left(\gamma + J^{(p-2)}(q,\gamma)\right).
\end{equation}
Using standard techniques of calculus it is not hard to derive their specific values for $p=-2$ and $p=-3$,
\begin{equation}
M^{(-2)}(q,\gamma) = 0,\quad M^{(-3)}(q,\gamma) = \frac{1-q^2}{2} \arcsin\frac{\sin\gamma}{\sqrt{1+q^2}}-\frac{\gamma}{2}+\frac{\sin \gamma}{2}\left( \sqrt{\cos^2\gamma+q^2} - \cos\gamma\right).
\end{equation}

Finally, we can express $I^{(p)}_{20}(q,\gamma)$ and $I^{(p)}_{02}(q,\gamma)$. Note that we only need to express the former as $I_{02}^{(p)}(q,\gamma)$ can be extracted from other integrals since
\begin{equation}
    I_{00}^{(p)}(q,\gamma) + I_{20}^{(p)}(q,\gamma) + I_{02}^{(p)}(q,\gamma) = \int_{D(q,\gamma)}\left(1 + x^2 + y^2\right)^{1+p/2} \ddd x \dd y = I_{00}^{(p+2)}(q,\gamma).
\end{equation}
Again, by using the polar coordinates substitution, we transform the integral into
\begin{equation}
I^{(p)}_{20}(q,\gamma) = \int_0^\gamma \int_0^{q \sec\varphi} r^3 \cos^2\varphi \left(1 + r^2\right)^{p/2} \ddd r \dd \varphi,
\end{equation}
Integrating out $r$, we get
\begin{equation}
\begin{split}
I_{20}^{(p)}(q,\gamma) & = \int_{0}^\gamma \frac{\cos^2\varphi \left[\left(1+q^2 \sec^2\varphi\right)^{2+\frac{p}{2}}-1\right]}{4+p} -\frac{\cos^2\varphi \left[\left(1+q^2 \sec^2\varphi\right)^{1+\frac{p}{2}}-1\right]}{2+p}\ddd\varphi\\
& = \frac{1}{4+p} M^{(p+2)}(q,\gamma) - \frac{1}{2+p} M^{(p)}(q,\gamma).
\end{split}
\end{equation}
Selected values of the auxiliary integrals $I^{(p)}_{ij}(q,\gamma)$ can be found below in the next section.

\subsection*{Special values of auxiliary integrals}
The following Table \ref{tab:auxil} lists some of the values of $I^{(1)}_{ij}(q,\gamma)$ used throughout our paper.
\bgroup
\renewcommand{\arraystretch}{2.5}
\begin{table}[htb!]
    \centering
    \begin{tabular}{c|c}
        $I^{(1)}_{00}\left(1,\frac{\pi}{4}\right)$ & $\displaystyle \frac{1}{2 \sqrt{3}}-\frac{\pi }{36}+\frac{2}{3} \arccoth\sqrt{3}$\\
        $I^{(1)}_{00}\left(\frac{\sqrt{2}}{2},\frac{\pi}{3}\right)$ & $\displaystyle \frac{1}{4}-\frac{\pi}{36}+\frac{7 \arccoth\sqrt{2}}{12 \sqrt{2}}$\\
        $I^{(1)}_{00}\left(\frac{\sqrt{2}}{4},\frac{\pi}{3}\right)$ & $\displaystyle\frac{1}{16 \sqrt{2}}+\frac{\pi }{18}+\frac{25 \ln 3}{192 \sqrt{2}}-\frac{1}{3} \arccot\sqrt{2}$ \\
        $I^{(1)}_{00}\left(\frac{\sqrt{2}}{2},\frac{\pi}{4}\right)$ & $\displaystyle\frac{1}{6 \sqrt{2}}-\frac{\pi }{12}+\frac{7 \ln 3}{24 \sqrt{2}}+\frac{1}{3} \arccot\sqrt{2}$ \\
        $I^{(1)}_{00}\left(\frac{1}{2},\frac{\pi}{5}\right)$ & $\displaystyle \frac{\sqrt{5}}{16} -\frac{5}{48} +\frac{\pi }{60}-\frac{13 \ln 5}{192}-\frac{1}{6} \arccot 2+\frac{13}{96}\arcsinh 2$ \\        
        $I^{(1)}_{00}\left(\frac{1}{2},\frac{2\pi}{5}\right)$ & $\displaystyle \frac{\sqrt{5}}{16} -\frac{5}{48} +\frac{\pi }{20}-\frac{13 \ln 5}{192}-\frac{1}{6} \arccot 2+\frac{13}{96}\arcsinh 2$ \\        
        $I^{(1)}_{10}\left(1,\frac{\pi}{4}\right)$ & $\displaystyle \frac{\sqrt{3}}{8}-\frac{\arcsinh\sqrt{2}}{8 \sqrt{2}}+\frac{1}{2} \arccoth\sqrt{3}$\\
        $I^{(1)}_{10}\left(\frac{\sqrt{2}}{2},\frac{\pi}{3}\right)$ & $\displaystyle\frac{3}{16 \sqrt{2}}-\frac{\sqrt{3}}{16} \arcsinh\sqrt{2}+\frac{9}{32} \arccoth\sqrt{2}$ \\
        $I^{(1)}_{10}\left(\frac{\sqrt{2}}{4},\frac{\pi}{3}\right)$ & $\displaystyle \frac{3}{256}+\frac{81 \ln 3}{1024}-\frac{\sqrt{3}}{16} \arccoth\sqrt{3}$ \\
        $I^{(1)}_{01}\left(1,\frac{\pi}{4}\right)$ & $\displaystyle -\frac{7}{12 \sqrt{2}}+\frac{3 \sqrt{3}}{8}+\frac{\arcsinh\sqrt{2}}{8 \sqrt{2}}-\frac{1}{8} \arccoth\sqrt{2}$\\
        $I^{(1)}_{01}\left(\frac{\sqrt{2}}{2},\frac{\pi}{3}\right)$ & $\displaystyle\frac{3}{8}\sqrt{\frac{3}{2}}-\frac{\sqrt{3}}{8}-\frac{1}{8} \arccoth\sqrt{3}+\frac{1}{16} \arcsinh\sqrt{2}$ \\
        $I^{(1)}_{11}\left(1,\frac{\pi}{4}\right)$ & $\displaystyle \frac{1}{30}-\frac{4 \sqrt{2}}{15}+\frac{3 \sqrt{3}}{10}$\\
        $I^{(1)}_{11}\left(\frac{\sqrt{2}}{2},\frac{\pi}{3}\right)$ & $\displaystyle \frac{1}{20}-\frac{3}{20}\sqrt{\frac{3}{2}}+\frac{3 \sqrt{3}}{20}$\\
        $I^{(1)}_{20}\left(\frac{\sqrt{2}}{2},\frac{\pi}{3}\right)$ & $\displaystyle \frac{1}{40}+\frac{1}{20 \sqrt{3}}+\frac{\pi }{180}+\frac{13 \arccoth\sqrt{2}}{120 \sqrt{2}}$\\
        $I^{(1)}_{20}\left(\frac{\sqrt{2}}{4},\frac{\pi}{3}\right)$ & $\displaystyle \frac{1}{20 \sqrt{3}}-\frac{29}{640 \sqrt{2}}-\frac{\pi }{90}+\frac{43 \ln 3}{7680 \sqrt{2}}+\frac{1}{15} \arccot\sqrt{2}$
    \end{tabular}
    \caption{Selected values of $I_{ij}^{(1)}(q,\gamma)$ for various arguments}
    \label{tab:auxil}
\end{table}
\egroup

\subsection*{List of equivalent values}
Note that the values in Table \ref{tab:auxil} are get by not only recursions alone, but also with addition to the following rules (equivalent replacement rules). These rules are only aesthetic and have no effect on the correctness of our results.
\begin{align*}
\arcsin\frac{1}{\sqrt{3}} & \to \frac{\pi }{2}-\arctan\sqrt{2}\\
\arcsin \sqrt{\frac{2}{3}} & \to \frac{\pi}{2}-\arccot\!\sqrt{2} \\
\arcsinh 1 & \to \arccoth \sqrt{2} \\
\arcsinh\frac{1}{\sqrt{3}} & \to\frac{\ln 3}{2}\\
\arcsinh\frac{1}{\sqrt{2}} & \to \arccoth\sqrt{3}
\end{align*}

\end{document}